\newtheorem{theorem}{Theorem}
\newtheorem{corollary}[theorem]{Corollary}
\newtheorem{lemma}[theorem]{Lemma}
\newtheorem{proposition}[theorem]{Proposition}
\newenvironment{claimproof}{\noindent\textit{Proof.}}{\hfill$\square$}
\theoremstyle{definition}
\newtheorem{definition}[theorem]{Definition}
\theoremstyle{remark} 
\newtheorem{remark}[theorem]{Remark}
\crefname{remark}{Remark}{Remarks}
\theoremstyle{definition}
\newtheorem{claim}{Claim}
\theoremstyle{remark}
\newtheorem{case}{Case}
\DeclarePairedDelimiter\floor{\lfloor}{\rfloor}
\DeclareMathOperator{\dist}{dist}
\DeclareMathOperator{\conv}{conv}
\DeclareMathOperator{\st}{star}
\DeclareMathOperator{\a-st}{astar}
\DeclareMathOperator{\lk}{link}
\newcommand{\R}{\mathbb{R}}
\newcommand{\A}{\mathcal{A}}
\newcommand{\C}{\mathcal{C}}
\newcommand{\B}{\mathcal{B}}
\newcommand{\St}{\mathcal{S}}
\newcommand{\Lk}{\mathcal{L}}
\newcommand{\dmark}{\textsuperscript{\dag}}
\crefname{rmk}{Remark}{Remarks}
\crefname{problem}{Problem}{Problems}
\date{\today}
\title{The linkedness of cubical polytopes: beyond the cube}
\author{Hoa T. Bui}
\address{Federation University Australia\\
Faculty of Science and Engineering, Curtin University, Australia}
\email{\texttt{hoa.bui@curtin.edu.au}}
\author{Guillermo Pineda-Villavicencio \& Julien Ugon}
\address{Federation University Australia\\School of Information Technology, Deakin University, Australia}
\email{\texttt{julien.ugon@deakin.edu.au}} 
\email{\texttt{work@guillermo.com.au}}
\thanks{Hoa T. Bui is supported by an Australian Government Research Training Program (RTP) Stipend and RTP Fee-Offset Scholarship through Federation University Australia. Julien Ugon's research was partially supported by ARC discovery project DP180100602.}
\keywords{$k$-linked, cube,  cubical polytope, connectivity, separator, linkedness}
\subjclass[2010]{Primary 52B05; Secondary 52B12}
\begin{document}
\begin{abstract} A cubical polytope is a polytope with all its facets being combinatorially equivalent to cubes. The paper is concerned with the linkedness of the graphs of cubical polytopes.

 A graph with at least $2k$ vertices is \textit{$k$-linked} if, for every set of $k$ disjoint pairs of vertices, there are $k$ vertex-disjoint paths joining the vertices in the pairs. We say that a polytope is \textit{$k$-linked} if its graph is $k$-linked.
 In a previous paper \cite{BuiPinUgo20a} we proved that every cubical $d$-polytope is $\floor{d/2}$-linked. Here we strengthen this result by establishing the $\floor{(d+1)/2}$-linkedness of cubical $d$-polytopes, for every $d\ne 3$.

A graph $G$ is {\it strongly $k$-linked} if it has at least $2k+1$ vertices and, for  every vertex $v$ of $G$, the subgraph $G-v$ is $k$-linked.
 We say that a polytope is (strongly) \textit{$k$-linked} if its graph is (strongly) $k$-linked. In this paper, we also prove that every cubical $d$-polytope is strongly $\floor{d/2}$-linked, for every $d\ne 3$.

These results are best possible for this class of polytopes.
\end{abstract}
\maketitle

\section{Introduction}

The {\it graph} $G(P)$ of a polytope $P$ is the undirected graph formed by the vertices and edges of the polytope. This paper studies the linkedness of {\it cubical $d$-polytopes}, $d$-dimensional polytopes with all their facets being cubes. A \textit{$d$-dimensional cube} is the convex hull in $\R^{d}$ of the $2^{d}$ vectors $(\pm 1,\ldots,\pm 1)$. By a cube we mean any polytope whose face lattice is isomorphic to the face lattice of a cube.   
 
Denote by $V(X)$ the vertex set of a graph or a polytope $X$. Given sets $A,B$ of vertices in a graph, a path from $A$ to $B$, called an {\it $A-B$ path}, is a (vertex-edge) path $L:=u_{0}\ldots u_{n}$ in the graph such that $V(L)\cap A=\{u_{0}\}$  and $V(L)\cap B=\{u_{n}\}$. We write $a-B$ path instead of $\{a\}-B$ path, and likewise, write $A-b$ path instead of $A-\{b\}$ path. 

Let $G$ be a graph and $X$ a subset of $2k$ distinct vertices of $G$. The elements of $X$ are called {\it terminals}. Let $Y:=\{\{s_{1},t_{1}\}, \ldots,\{s_{k},t_{k}\}\}$ be an arbitrary labelling and (unordered) pairing of all the vertices in $X$. We say that $Y$ is {\it linked} in $G$ if we can find disjoint $s_{i}-t_{i}$ paths for all $i\in [1,k]$, where $[1,k]$ denotes the interval $1,\ldots,k$. The set $X$ is {\it linked} in $G$ if every such pairing of its vertices is linked in $G$. Throughout this paper, by a set of disjoint paths, we mean a set of vertex-disjoint paths. If $G$ has at least $2k$ vertices and every set of exactly $2k$ vertices is linked in $G$, we say that $G$ is {\it $k$-linked}. If the graph of a polytope is $k$-linked, we say that the polytope is also {\it $k$-linked}.

Linkedness is a stronger property than connectivity: let $G$ be a graph with at least $2k$ vertices, and  let $S:=\{s_{1},\ldots,s_{k}\}$ and $T:=\{t_{1},\ldots,t_{k}\}$ be two disjoint $k$-element sets of vertices in $G$. It follows from Menger's theorem that, if $G$ is $k$-connected then the sets $S$ and $T$ can be joined \textbf{setwise}  by disjoint paths (namely, by $k$ disjoint $S-T$ paths). By contrast, if $G$ is $k$-linked then the sets can be joined \textbf{pointwise} by disjoint paths.  

A closely related problem to linkedness is the classical \textit{disjoint paths problem} \cite{RobSey-XIII}: given a graph $G$ and a set  $Y:=\{\{s_{1},t_{1}\}, \ldots,\{s_{k},t_{k}\}\}$ of $k$ pairs of terminals in $G$, decide whether or not $Y$ is linked in $G$. A natural optimisation version of this problem is to find the largest subset of the pairs so that there exist disjoint paths connecting the selected pairs.

There is a linear function $f(k)$ such that every $f(k)$-connected graph is $k$-linked, which follows from works of Bollob\'as and Thomason \cite{BolTho96}; Kawarabayashi, Kostochka, and Yu \cite{KawKosYu06}; and Thomas and Wollan \cite{ThoWol05}. In the case of polytopes, Larman and Mani \cite[Thm.~2]{LarMan70} proved that every $d$-polytope  is $\floor{(d+1)/3}$-linked, a result that was slightly improved to $\floor{(d+2)/3}$  in \cite[Thm.~2.2]{WerWot11}. Gallivan \cite{Gal85} proved that not every polytope is $\lfloor d/2\rfloor$-linked. 
In view of this negative result, researchers have focused efforts on finding families of $d$-polytopes that are $\floor{d/2}$-linked.  
In his PhD thesis \cite[Question~5.4.12]{Ron09}, Wotzlaw asked whether every cubical $d$-polytope is $\floor{d/2}$-linked. In \cite{BuiPinUgo20a} we answer his question in the affirmative  by establishing the following theorem.
\begin{theorem} 
\label{prop:weak-linkedness-cubical} For every $d\ge 1$, a  cubical $d$-polytope is $\floor{d/2}$-linked.
\end{theorem}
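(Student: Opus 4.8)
\emph{Proof proposal.} I would argue by induction on $d$. For $d\le 3$ we have $\floor{d/2}\le 1$, and the graph of any polytope of dimension at least one is connected and hence $1$-linked (and trivially $0$-linked when $d=1$); so assume $d\ge 4$ and that the statement holds in all smaller dimensions. Write $k=\floor{d/2}$, so that $2k\le d$. The structural fact to exploit is that every facet $F$ of a cubical $d$-polytope $P$ is itself a cubical $(d-1)$-polytope, so $G(F)$ is $\floor{(d-1)/2}$-linked by the inductive hypothesis; moreover $|V(F)|=2^{d-1}\ge 2k$, and $G(P)-V(F)$ is connected (sweep $P$ by a linear functional minimised on $F$).

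Given $2k$ terminals $X=\{s_1,t_1,\dots,s_k,t_k\}$ with a prescribed pairing $Y$, the plan is to transfer the problem into a facet. Since $G(P)$ is $d$-connected by Balinski's theorem and $2k\le d$, a Menger/fan argument produces $2k$ pairwise disjoint paths — a \emph{fan} — from the $2k$ terminals to $2k$ distinct vertices of a chosen facet $F$, length-zero paths being permitted for terminals already on $F$; the needed cut estimate comes from the $(d-|X\cap V(F)|)$-connectivity of $G(P)-(X\cap V(F))$ together with $|V(F)|$ being large. This fan carries $Y$ to a pairing $Y'$ of the $2k$ landing vertices of $F$. If $d$ is odd then $\floor{(d-1)/2}=k$, so $G(F)$ links $Y'$ by $k$ pairwise disjoint paths, and these concatenate legally with the fan because $k$ pairwise disjoint paths inside $F$ automatically avoid the landing vertices of the other pairs. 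This disposes of the odd case.

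If $d=2m$ is even, the facet $F$ is only $(m-1)$-linked, so the scheme falls exactly one pair short; bridging this gap is, I expect, the crux. The approach I would try is to set aside one pair, link the remaining $m-1$ pairs inside a facet $F$ as above, and route the set-aside pair through the connected graph $G(P)-V(F)$, choosing $F$ and the fan so that this ``outside'' path misses everything already used; a symmetric variant splits the $m$ pairs between two facets of small intersection, each $(m-1)$-linked. Making either variant precise forces one to choose $F$ with care — e.g.\ a facet containing as many terminals as possible, or one far from the terminals — so that the fans stay short and the leftover graph stays connected where needed, and it will very likely require strengthening the inductive statement to a rooted version that additionally reserves a path for one extra pair. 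I anticipate that the key ingredient is a dedicated separator lemma for cubical polytopes — roughly, that deleting a facet together with a bounded number of further vertices leaves the graph with the connectivity the fans demand — and that establishing this lemma is where the real difficulty lies.
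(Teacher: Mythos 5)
First, a point of reference: this paper does not prove \cref{prop:weak-linkedness-cubical} at all --- it is imported from the companion paper \cite{BuiPinUgo20a}, where the proof runs through the strong $\floor{(d+1)/2}$-linkedness of the $d$-cube for $d\ne 3$, the projection and association machinery for pairs of opposite facets (\cref{lem:facets-association}), and a reduction to the star and antistar of a vertex. So your facet-fan induction on general cubical polytopes is in any case a different route from the one the authors take, and it cannot simply borrow their intermediate lemmas.

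The decisive problem is that your induction does not close. The odd-dimensional step reduces the problem to the $\floor{(d-1)/2}$-linkedness of a facet; but a facet of a cubical $d$-polytope with $d$ odd is a $(d-1)$-cube with $d-1$ \emph{even}, so the odd case leans on exactly the even case one dimension down --- and the even case is the part you explicitly leave open (``bridging this gap is, I expect, the crux \ldots establishing this lemma is where the real difficulty lies''). Consequently nothing beyond the trivial base $d\le 3$ is actually established. Already for $d=4$ you must link one pair inside a $3$-cube facet and route the second pair through $G(P)-V(F)$ while dodging the fan, the first linkage, and the terminals, and no argument is given for why a suitable facet and fan exist; the $3$-cube is not $2$-linked, which is precisely the obstruction the authors wrestle with throughout. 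Your odd-case reduction itself is essentially sound --- truncate each fan path at its first vertex of $F$, and note that $k$ disjoint paths in $F$ automatically avoid one another's endpoints --- but that is the easy half. The mathematical content of the theorem lives in the even case, and the proposal replaces it with a conjectured ``rooted'' strengthening of the inductive statement and a conjectured separator lemma rather than a proof.
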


The paper \cite{BuiPinUgo20a} also established the linkedness of the $d$-cube.

\begin{theorem}[{Linkedness of the cube}]\label{thm:cube-linkedness}  For every $d\ne 3$, a $d$-cube is $\floor{(d+1)/2}$-linked.
\end{theorem}

In this paper, we extend these two results as follows:
\begin{theorem}[Linkedness of cubical polytopes]\label{thm:cubical} For every $d\ne 3$, a cubical $d$-polytope is $\floor{(d+1)/2}$-linked.
\end{theorem}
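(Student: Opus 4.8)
The plan is to prove \Cref{thm:cubical} by induction on $d$, carried out together with the strong $\floor{(d+1)/2}$-linkedness announced in the abstract; the weak and strong statements are proved jointly because each feeds the other through facets. For even $d$ there is no new content in the weak statement, since $\floor{(d+1)/2}=d/2=\floor{d/2}$ and \Cref{prop:weak-linkedness-cubical} already applies, so there only the strong refinement must be argued. The substance is the odd case $d\ge 5$, where one must upgrade $\floor{d/2}=(d-1)/2$ to $(d+1)/2$. The reason for dragging along the strong hypothesis is that, when $d$ is odd, a facet of a cubical $d$-polytope is an \emph{even}-dimensional cubical polytope $F$ of dimension $d-1$, and the strong $\floor{d/2}$-linkedness of $F$ — linking $(d-1)/2$ prescribed pairs among $d$ given vertices of $F$ while \emph{avoiding} the remaining one — is exactly what lets one route pairs through $F$ while keeping a free vertex of $F$ available as a gateway to the rest of $P$.

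Fix a cubical $d$-polytope $P$ with $d$ odd, $d\ge 5$, a set $X$ of $d+1$ terminals, and an arbitrary pairing $Y=\{\{s_1,t_1\},\dots,\{s_k,t_k\}\}$ with $k=(d+1)/2$; after some standard preprocessing of the terminal configuration, the two standing tools are: (i) Balinski's theorem, so $G(P)$ is $d$-connected and in particular $(d-j)$-connected after the removal of any $j<d$ vertices; and (ii) the fact that every facet $F$ of $P$ is again a cubical $(d-1)$-polytope, to which the inductive hypothesis, and in particular its strong form, applies. A third ingredient is a connectivity statement for $G(P)$ relative to a facet, of the kind developed en route to \Cref{prop:weak-linkedness-cubical}, used to pass paths through the complement of a facet.

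The core of the induction is a dichotomy according to how $X$ meets the facets of $P$. In the \emph{concentrated} case, some facet $F$ contains all but at most one terminal; one links the pairs lying inside $F$ — and, from the $F$-endpoint of each cross-pair, leaves $F$ immediately, which is possible since every vertex of $P$ has degree at least $d$ while the cube $F$ contributes only $d-1$ of its incident edges — using the strong linkedness of $F$ and keeping one vertex $w\in V(F)$ free; the at most one leftover pair is then completed by a path that enters the complement of $F$ through $w$, supplied by the $d$-connectivity of $G(P)$. The reserved gateway $w$ is exactly what produces the extra, $((d-1)/2)+1=(d+1)/2$-th path. In the \emph{spread} case no facet is this good; one picks a facet and routes the cross-pairs and exterior pairs through $G(P)\setminus V(F)$ via the relative connectivity lemma, or argues by contradiction through a separator-based analysis of the failed linkage, invoking a structural classification of the small separators of cubical $d$-polytopes — in the spirit of Larman--Mani's \cite{LarMan70} treatment of general $d$-polytopes — to the effect that such a separator essentially lies in a facet or is a vertex figure, which forces most of $X$ to one side and reduces the instance to a strictly lower dimension, contradicting the spread hypothesis.

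I expect the real difficulty to be twofold. First, the bookkeeping that makes the dichotomy airtight: one must arrange the concentrated case so that \emph{exactly one} pair, and no more, has to be evacuated outside the chosen facet, for only then does a single reserved gateway suffice — controlling the number of cross-pairs against the ``avoid one vertex'' budget of strong linkedness is delicate, and this is precisely where the whole gain over \Cref{prop:weak-linkedness-cubical} is won or lost. Second, the structural input in the spread case — the separator classification, equivalently a strong enough connectivity statement for the complement of a facet in a cubical polytope — is where the cubical structure, rather than mere $d$-connectivity, must be used essentially. This is also why $d=3$ is excluded: the $3$-cube is already not $2$-linked, so no gateway or separator argument can work there, and the odd-dimensional induction has no valid base below $d=5$.
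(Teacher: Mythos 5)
Your overall strategy differs substantially from the paper's, and it contains gaps that are not just bookkeeping. The paper does not induct on $d$ at all: since the facets of a cubical polytope are cubes, the strong $\floor{d/2}$-linkedness of the $(d-1)$-cube (\cref{thm:cube-strong-linkedness}, from the companion paper) is available outright, and the argument instead reduces everything to the star $\St_1$ of one terminal $s_1$ -- first routing the other $2k-1$ terminals into $\St_1$ by $d$-connectivity, then proving (\cref{lem:star-cubical}) that $\St_1$ links the pairs \emph{unless} $s_1$ is in a precisely described obstruction, Configuration $d$F, which is then broken by rerouting one feeding path through a ridge outside the offending facet. Your concentrated/spread dichotomy and the appeal to a ``structural classification of small separators of cubical polytopes'' have no counterpart in the paper or its references; that classification is an unproven structural claim, and nothing in \cite{LarMan70} or the connectivity results quoted here supplies it.

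Two concrete failures. First, the arithmetic of your gateway argument does not close in the hardest concentrated subcase, where all $d+1=2k$ terminals lie in one facet $F$ (a $(d-1)$-cube). Strong $(k-1)$-linkedness of $F$ links $k-1$ pairs among $d$ prescribed vertices while avoiding exactly \emph{one} unpaired vertex; but here you must link $k-1$ pairs while avoiding the \emph{two} vertices $s_k,t_k$ of the evacuated pair, which is one more than the strong hypothesis provides. The paper spends all of \cref{case:new-linkedness-thm-4} of \cref{lem:star-cubical} (subcases A, B, C, separately for $d\ge 7$ and $d=5$) working around precisely this deficit, using the injection of \cref{lem:projections-star} into the antistar, the link of $s_1$, and ridge projections. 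Second, your reserved-gateway idea fails outright in Configuration $d$F: if $t_1$ lies in a unique facet $F_1$ with $s_1$ antipodal to it and every neighbour of $t_1$ in $F_1$ is a terminal, then every $s_1$--$t_1$ path must pass through a terminal, so no choice of free vertex $w\in V(F)$ rescues the linkage inside the star; one must instead reroute one of the paths that feed terminals into $\St_1$ so as to move a terminal off the neighbourhood of $t_1$, which is the content of the third step of the paper's proof. Your proposal has no mechanism for detecting or escaping this configuration, and it is exactly where the gain over \cref{prop:weak-linkedness-cubical} is decided.
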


Our methodology relies on results on the connectivity of strongly connected subcomplexes of cubical polytopes, whose proof ideas were first developed in \cite{ThiPinUgo18v3}, and a number of new insights into the structure of $d$-cube exposed in \cite{BuiPinUgo20a}. One obstacle that forces some tedious analysis is the fact that the 3-cube is not 2-linked.

Let  $X$ be a set of vertices in a graph $G$. Denote by $G[X]$ the subgraph of $G$ induced by $X$, the subgraph of $G$ that contains all the edges of $G$ with vertices in $X$. Write $G-X$ for $G[V(G)\setminus X]$. If $X=\{v\}$, then we write $G-v$ instead of $G-\{v\}$.

In our paper \cite{BuiPinUgo20a}, we introduce the notion of strong linkedness. We say that a graph $G$ with at least $2k+1$ vertices is {\it strongly $k$-linked} if for  every vertex $v$ of $G$, the subgraph $G-v$ is $k$-linked.
 A polytope is  \textit{strongly $k$-linked} if its graph is strongly $k$-linked. We proved the strong-linkedness of the cube as follows:
 \begin{theorem}[Strong linkedness of the cube {\cite[Thm. 25]{BuiPinUgo20a}}]\label{thm:cube-strong-linkedness}  For every $d\ge 1$, a $d$-cube is strongly $\floor{d/2}$-linked. 
\end{theorem} 

 In this paper, we extend this result to cubical polytopes:

\begin{theorem}[Strong linkedness of cubical polytopes]\label{thm:cubical-strong-linkedness} For every $d\ne 3$, a cubical $d$-polytope is strongly $\floor{d/2}$-linked.\end{theorem}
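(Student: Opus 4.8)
The first reduction is by parity. When $d$ is odd, say $d=2k-1$, a set $X$ of $d+1=2k$ vertices is completely paired by any pairing with $\floor{(d+1)/2}=k$ pairs, so there is no unpaired terminal to avoid and strong $k$-linkedness coincides verbatim with $k$-linkedness; the odd case is therefore exactly \cref{thm:cubical} (which excludes only $d=3$, as does the present statement). So assume henceforth that $d=2k$ is even and $\floor{(d+1)/2}=k$. Fix a cubical $d$-polytope $P$, a set $X$ of $2k+1$ vertices, a pairing $Y=\{\{s_1,t_1\},\dots,\{s_k,t_k\}\}$, and the unique vertex $w\in X$ that $Y$ leaves unpaired; we must produce $k$ vertex-disjoint paths joining the $k$ pairs, all avoiding $w$.

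The plan is to move the whole problem onto the antistar of $w$. Let $\operatorname{ast}_P(w)$ be the subcomplex of all faces of $P$ not containing $w$. Any face disjoint from $w$ is contained in some facet disjoint from $w$ (a face is the intersection of the facets containing it, so those facets cannot all contain $w$), hence $\operatorname{ast}_P(w)$ is a strongly connected pure $(d-1)$-dimensional cubical subcomplex of the boundary complex of $P$ (indeed a $(d-1)$-ball), and its graph is exactly $G(P)-w$: a vertex survives iff it is not $w$, and an edge survives iff it is not incident to $w$. Since the $2k$ paired terminals all lie in $V(P)\setminus\{w\}=V(\operatorname{ast}_P(w))$, the conclusion we want is precisely that $Y$ is linked in the graph of $\operatorname{ast}_P(w)$. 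Observe that on the antistar there is no longer a vertex to avoid: the ``strong'' content has been absorbed, and what remains is ordinary $k$-linkedness. Hence the even-$d$ case of \cref{thm:cubical-strong-linkedness} follows once we know that the graph of $\operatorname{ast}_P(w)$ is $\floor{d/2}=k$-linked (it has at least $2k$ vertices, e.g.\ because $P$ is $d$-connected).

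The target is therefore a linkedness statement for cubical subcomplexes, in the spirit of the proof of \cref{thm:cubical}: the graph of a suitable (say, ball-like, as $\operatorname{ast}_P(w)$ is) strongly connected pure $e$-dimensional cubical subcomplex of the boundary of a cubical polytope is $\floor{(e+1)/2}$-linked, which for $e=d-1$ gives the required $\floor{d/2}=k$. I would prove this by induction on the dimension $e$ and, within a fixed dimension, on the number of facets, paralleling \cref{thm:cubical}. In the step one separates the case in which all $2k$ terminals lie in a single facet $F$ of the complex --- then $F$ is an $e$-cube, which for $e\ne 3$ is $\floor{(e+1)/2}$-linked by \cref{thm:cubical}, so one links inside $F$ --- from the case in which the terminals are spread over at least two facets, where one extracts a separating strongly connected pure $(e-1)$-dimensional subcomplex, routes the terminals of each ``split'' pair to that separator and links them there (induction on $e$), while a pair contained in one side of the separator is handled by induction on the number of facets. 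The high connectivity of cubical polytopes and of the subcomplexes produced along the way is exactly what supplies the room needed to route terminals to the separator.

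The main obstacle is the low-dimensional exception. When the recursion reaches a $3$-dimensional facet, that facet is a $3$-cube, which is not $2$-linked, so the clean ``link everything inside one facet'' step is unavailable and one must instead route paths that leave the facet. This forces a direct, case-heavy analysis of how two pairs can be linked inside a $3$-dimensional strongly connected pure cubical ball --- equivalently, inside the antistar of a vertex of a cubical $4$-polytope --- which is the ``tedious analysis'' anticipated in the introduction and the reason $d=3$ must be excluded from the statement. I expect this $d=4$ bookkeeping, together with verifying that in the spread-out case the separating subcomplex and the rerouting to it can always be chosen so as to avoid any one prescribed vertex, to be the most delicate parts of the argument.
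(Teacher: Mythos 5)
Your reduction of the odd case to \cref{thm:cubical} is exactly right and matches the paper. For even $d=2k$, however, you take a genuinely different route from the paper, and the route as sketched has a real gap. The paper does \emph{not} pass to the antistar of the unpaired vertex $x$: it uses the $d$-connectivity of $G(P)$ and Menger's theorem to route the $2k$ paired terminals into the \emph{link} of $x$, and then invokes the already-established $k$-linkedness of that link (\cref{prop:link-cubical}, quoted from the predecessor paper) to join the pairs there; every vertex of the link is distinct from $x$, so avoidance of $x$ is automatic and the proof is a few lines long. Your plan instead requires that $G(P)-w$ (the graph of the antistar of $w$) be $k$-linked, which you propose to obtain from a new general theorem --- that a strongly connected pure $e$-dimensional cubical ball inside the boundary of a cubical polytope is $\floor{(e+1)/2}$-linked. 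That statement appears nowhere in the paper, is strictly stronger than anything proved there (\cref{thm:cubical} and \cref{lem:star-cubical} concern the whole boundary complex and the star of a vertex, not arbitrary cubical balls), and you do not prove it; the entire content of the even case is deferred to it.

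Beyond being unproven, the sketched induction is doubtful at the key step. With $e=d-1=2k-1$, your inductive appeal to a separating pure $(e-1)$-dimensional subcomplex would, by your own induction hypothesis, only yield $\floor{e/2}=k-1$-linkedness of the separator, which is not enough to link all $k$ pairs there; and the connectivity budget is tight, since $G(P)-w$ is only guaranteed to be $(d-1)=(2k-1)$-connected, the bare minimum for $k$-linkedness, leaving no slack for the rerouting you describe. Finally, the ``all terminals in one facet'' branch hits the non-$2$-linked $3$-cube already at $d=4$ (facets of the antistar are $3$-cubes), so your base case is precisely the hard configuration, whereas the paper's choice of the link of $x$ sidesteps this entirely. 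To repair your argument you would either need to prove the linkedness theorem for cubical balls in full, or switch to the paper's target subcomplex, the link of the unpaired vertex, for which the required linkedness is already available.
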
 

Unless otherwise stated, the graph theoretical notation and terminology follow from \cite{Die05} and the polytope theoretical notation and terminology from \cite{Zie95}. Moreover, when referring to graph-theoretical properties of a polytope such as minimum degree, linkedness and connectivity, we mean properties of its graph.

\section{Connectivity of cubical polytopes}
\label{sec:cubical-connectivity}

The aim of this section is to present a couple of results related to the connectivity of strongly connected complexes in cubical polytopes.   
A pure polytopal complex $\C$ is {\it strongly connected} if every pair of facets $F$ and $F'$ is connected by a path $F_{1}\ldots F_{n}$ of facets in $\C$ such that $F_{i}\cap F_{i+1}$ is a ridge of $\C$ for each $i\in [1,n-1]$, $F_{1}=F$ and $F_{n}=F'$; we say that such a path is a {\it $(d-1,d-2)$-path} or a {\it facet-ridge path} if the dimensions of the faces can be deduced from the context. Two basic examples of strongly connected complexes are given by the complex of all faces of a polytope $P$, called the {\it complex} of $P$ and denoted by $\C(P)$, and the complex of all proper faces of $P$, called the {\it boundary complex} of $P$ and denoted by $\B(P)$. For the definitions of polytopal complexes and pure polytopal complexes, refer to~\cite[Section 5.1]{Zie95}.

Given  a polytopal complex $\C$ with vertex set $V$ and a subset $X$ of $V$,  the subcomplex of $\C$ formed by all the faces of $\C$ containing only vertices from $X$ is said to be {\it induced by $X$} and is denoted by $\C[X]$.  Removing from $\C$ all the vertices in a subset $X\subset V(\C)$  results in the subcomplex $\C[V(\C)\setminus X]$, which we write as $\C-X$. If $X=\{x\}$ we write $\C-x$ rather than $\C-\{x\}$. We say that a subcomplex $\C'$ of a complex $\C$ is a {\it spanning} subcomplex of $\C$ if $V(\C')=V(\C)$. The {\it graph} of a complex is the undirected graph formed by the vertices and edges of the complex; as in the case of polytopes, we denote the graph of a complex $\C$ by $G(\C)$. 

For a polytopal complex $\C$, the {\it star} of a face $F$ of $\C$, denoted $\st(F,\C)$, is the subcomplex of $\C$ formed by all the faces containing $F$, and their faces; the {\it antistar} of a face $F$ of $\C$, denoted $\a-st(F,\C)$, is the subcomplex of $\C$ formed by all the faces disjoint from $F$; and the {\it link} of a face $F$, denoted $\lk(F,\C)$, is the subcomplex of $\C$ formed by all the faces of $\st(F,\C)$ that are disjoint from $F$. That is, $\a-st(F,\C)=\C-V(F)$ and $\lk(F,\C)=\st(F,\C)-V(F)$. Unless otherwise stated, when defining stars, antistars and links in a polytope, we always assume that the underlying complex is the boundary complex of the polytope.   

The first results are from  \cite{ThiPinUgo18v3}.

\begin{lemma}[{\cite[Lem.~8]{ThiPinUgo18v3}}]
\label{lem:cube-face-complex} Let $F$ be a proper face in the $d$-cube $Q_{d}$. Then the antistar of $F$ is a strongly connected $(d-1)$-complex. 
\end{lemma}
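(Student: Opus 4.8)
The plan is to use the product structure of the cube to write the antistar down explicitly; once this is done, purity and strong connectivity can both be read off. Identify $V(Q_d)$ with $\{0,1\}^d$, so that the nonempty faces of $Q_d$ are the subcubes determined by a \emph{free set} $S\subseteq[d]$ together with a \emph{fixing} $\sigma\colon[d]\setminus S\to\{0,1\}$, the face in question being $\{x\in\{0,1\}^d:\ x_i=\sigma(i)\text{ for all }i\notin S\}$ and having dimension $|S|$. Let the given proper face $F$ have free set $T$ and fixing $\tau$, and put $j:=\dim F=|T|$; we may assume $F$ is nonempty, so $0\le j\le d-1$ (if $F$ is the empty face the antistar is $\B(Q_d)$, whose strong connectivity is standard and in any case follows from the same reasoning below).

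The first step is to pin down the facets of the antistar. For $\varepsilon\in\{0,1\}$, the facet $\{x_i=\varepsilon\}$ of $Q_d$ is disjoint from $F$ exactly when $i\notin T$ and $\varepsilon=1-\tau(i)$: if $i\in T$ the facet meets $F$, while if $i\notin T$ it meets $F$ precisely when $\varepsilon=\tau(i)$. Equivalently, the $(d-1)$-faces of $Q_d$ lying in the antistar of $F$ are exactly the facets opposite to the facets through $F$; denote them $G_i:=\{x:\ x_i=1-\tau(i)\}$, $i\in[d]\setminus T$. Since $F$ is proper it lies in at least one facet, so there are $d-j\ge 1$ such $G_i$.

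Next I would verify purity. Let $G$ be any face of the antistar, with free set $S$ and fixing $\sigma$. As $V(G)\cap V(F)=\emptyset$, the equation system $x_i=\sigma(i)$ ($i\notin S$), $x_j=\tau(j)$ ($j\notin T$) has no solution in $\{0,1\}^d$; since each coordinate is constrained at most twice, this forces some coordinate $i\in[d]\setminus(S\cup T)$ with $\sigma(i)\ne\tau(i)$, i.e.\ $\sigma(i)=1-\tau(i)$, whence $G\subseteq G_i$. So every face of the antistar lies in one of the $G_i$, the antistar is pure of dimension $d-1$, and its facets are exactly the $G_i$ with $i\in[d]\setminus T$. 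Finally, for distinct $i,i'\in[d]\setminus T$ the intersection $G_i\cap G_{i'}=\{x:\ x_i=1-\tau(i),\ x_{i'}=1-\tau(i')\}$ is a $(d-2)$-face still disjoint from $F$ (it already forces $x_i\ne\tau(i)$), hence a ridge of the antistar shared by $G_i$ and $G_{i'}$; therefore the dual graph of the antistar is complete on its $d-j$ facets and the antistar is strongly connected. (When $j=d-1$ the antistar has the single facet $F^o$ and the statement is trivial.)

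The one step deserving genuine care is purity: a priori a low-dimensional face disjoint from $F$ could conceivably only be contained in facets of $Q_d$ that meet $F$, which would make the antistar non-pure and leave the notion of a facet-ridge path undefined; the coordinate computation is exactly what excludes this. The rest is routine once the facets of the antistar are identified. One could also run the three steps synthetically, using the projection $\pi$ of \cref{def:projection} and the characterisation of the antistar's facets as opposites of the facets through $F$, but this does not shorten the bookkeeping.
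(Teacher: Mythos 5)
Your proof is correct. This paper does not prove \cref{lem:cube-face-complex} itself but imports it from \cite{ThiPinUgo18v3}; your coordinate argument — identifying the facets of the antistar as the facets of $Q_{d}$ opposite to those containing $F$, verifying purity via the two-constraints-per-coordinate observation, and noting that any two such facets meet in a $(d-2)$-face still disjoint from $F$, so the dual graph is complete — is essentially the standard proof of that result.
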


\begin{proposition}[{\cite[Prop.~13]{ThiPinUgo18v3}}]\label{prop:star-minus-facet}  Let $F$ be a facet in the star $\St$  of a vertex in a cubical $d$-polytope. Then the antistar of $F$ in $\St$ is a strongly connected $(d-2)$-subcomplex of $\St$. 
\end{proposition}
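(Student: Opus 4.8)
The plan is to cut the antistar of $F$ in $\St$ into pieces, one for each facet of $\St$ other than $F$, to recognise each piece as an antistar of a face inside a cube—hence strongly connected by \cref{lem:cube-face-complex}—and then to glue the pieces together along a suitable connected ``nerve''. Write $\St=\st(v,\B(P))$ and let $F=F_{1},F_{2},\dots,F_{n}$ be the facets of $\St$, i.e.\ the facets of $P$ containing $v$; each $F_{i}$ is a $(d-1)$-cube. Since the star of a vertex is pure, $\St=\bigcup_{i}\C(F_{i})$, and therefore $\St-V(F)=\bigcup_{i\ge 2}\bigl(\C(F_{i})-V(F)\bigr)$, the term $i=1$ being empty. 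For $i\ge 2$ the face $G_{i}:=F\cap F_{i}$ is a proper nonempty face of the cube $F_{i}$ (it contains $v$, and $G_{i}\ne F_{i}$ because distinct facets of $P$ do not contain one another); a direct check shows $\C(F_{i})-V(F)$ equals the antistar of $G_{i}$ in $\B(F_{i})$, which by \cref{lem:cube-face-complex} is a strongly connected $(d-2)$-complex. So $\St-V(F)$ is a union of strongly connected pure $(d-2)$-complexes, hence pure of dimension $d-2$; it remains to prove strong connectivity.

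For the gluing I would first record an elementary lemma: if $\C'$ and $\C''$ are strongly connected pure $(d-2)$-complexes whose intersection contains a $(d-3)$-face $T$, then $\C'\cup\C''$ is a strongly connected pure $(d-2)$-complex—$T$ lies in a facet of $\C'$ and in a facet of $\C''$, and any two facets of $\C'\cup\C''$ are joined by a facet--ridge path that stays in $\C'$, then crosses the ridge between those two facets, then stays in $\C''$. Next let $\Gamma$ be the graph on $\{F_{2},\dots,F_{n}\}$ with $F_{i}\sim F_{j}$ whenever $R:=F_{i}\cap F_{j}$ is a ridge (a $(d-2)$-face) of $P$. For such an edge, $R$ lies in exactly the two facets $F_{i},F_{j}$ of $P$, so $F$ does not contain $R$, whence $F\cap R$ is a proper nonempty face of the $(d-2)$-cube $R$; the computation of the previous paragraph then identifies $\bigl(\C(F_{i})-V(F)\bigr)\cap\bigl(\C(F_{j})-V(F)\bigr)$ with the antistar of $F\cap R$ in $\B(R)$, a strongly connected $(d-3)$-complex by \cref{lem:cube-face-complex}, which in particular contains a $(d-3)$-face. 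Thus adjacent pieces in $\Gamma$ meet the hypothesis of the gluing lemma, so once $\Gamma$ is connected, listing its vertices so that each is adjacent to an earlier one and applying the gluing lemma repeatedly yields that $\St-V(F)=\bigcup_{i\ge 2}\bigl(\C(F_{i})-V(F)\bigr)$ is strongly connected.

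It therefore suffices to show that $\Gamma$ is connected. For $d\le 2$ this is immediate, since then $\St-V(F)$ is a single vertex (or empty), so assume $d\ge 3$. The facets of $P$ through $v$ correspond bijectively to the facets of the vertex figure $P/v$, a $(d-1)$-polytope, via $F_{i}\mapsto F_{i}/v$, and $F_{i}\cap F_{j}$ is a $(d-2)$-face of $P$ exactly when $(F_{i}/v)\cap(F_{j}/v)$ is a ridge of $P/v$; hence $\Gamma$ is the dual graph of $P/v$ with the vertex $F/v$ deleted, equivalently the graph of the dual polytope of $P/v$ minus one vertex. That dual polytope is a $(d-1)$-polytope with $d-1\ge 2$, so its graph is $2$-connected by Balinski's theorem, and deleting one vertex leaves it connected.

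I expect the global strong connectivity, not the purity, to be the main obstacle: distinct pieces $\C(F_{i})-V(F)$ may intersect in faces of dimension well below $d-3$, so they cannot be glued all at once, and the argument has to be routed through the nerve $\Gamma$; recognising $\Gamma$ as a vertex-deleted dual graph of the vertex figure, and deducing its connectivity from Balinski's theorem, is the crux. A more routine but fiddly point is the bookkeeping that identifies $\C(F_{i})-V(F)$ and the pairwise intersections of these complexes with honest antistars inside cubes, so that \cref{lem:cube-face-complex} applies.
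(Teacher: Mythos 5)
Your proof is correct, and it follows essentially the approach the paper relies on: the result is imported from \cite{ThiPinUgo18v3} without a reproduced proof, but the stated proof idea --- decomposing the antistar facet by facet, applying \cref{lem:cube-face-complex} to each piece, and gluing the pieces along a facet--ridge path whose existence reduces via duality to Balinski's theorem --- is exactly the scheme the paper extends in the proof of \cref{lem:technical}. Nothing further is needed.
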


 Let $v$ be a vertex in a $d$-cube $Q_{d}$ and let $v^{o}$ denote the vertex at distance $d$ from $v$, called the vertex {\it opposite} to $v$ in $Q_{d}$; by distance in a cube, we mean the graph-theoretical distance in the cube.   
In the $d$-cube $Q_{d}$, the facet disjoint from a facet $F$ is denoted by $F^o$, and we say that $F$ and $F^{o}$ are a pair of {\it opposite} facets. 

We proceed with a simple but useful remark.
 
\begin{remark}
\label{rmk:opposite-vertex-1}
Let $P$ be a cubical $d$-polytope. Let  $v$  be a vertex  of $P$ and let $F$ be a face of $P$ containing $v$, which is a cube. In addition, let $v^{o}$ be the vertex of $F$ opposite to $v$ in $F$. The smallest face in the polytope containing both $v$ and $v^{o}$ is precisely $F$.
\end{remark}


The proof idea in \cref{prop:star-minus-facet} can be pushed a bit further to obtain a rather technical result that we prove next. Two vertex-edge paths are {\it independent} if they share no inner vertex.

\begin{lemma} Let $P$ be a cubical $d$-polytope with $d\ge 4$. Let $s_{1}$ be any vertex in $P$ and let $\St_{1}$ be the star of $s_{1}$ in the boundary complex of $P$.  Let $s_{2}$ be any vertex in $\St_{1}$, other than $s_{1}$. Define the following sets:
\begin{itemize}
\item $F_{1}$ in $\St_{1}$, a facet containing $s_{1}$ but not $s_{2}$;
\item  $F_{12}$ in $\St_{1}$, a facet containing $s_{1}$ and $s_{2}$;
\item $\St_{12}$, the star of $s_{2}$ in $\St_{1}$ (that is, the subcomplex of $\St_{1}$ formed by the facets of $P$ in $\St_{1}$ containing $s_{2}$);
\item $\mathcal A_{{1}}$, the antistar of $F_{1}$ in $\St_{1}$; and
\item $\mathcal A_{12}$, the subcomplex of $\St_{12}$ induced by $V(\St_{12})\setminus (V(F_{1})\cup V(F_{12}))$.
\end{itemize}
Then the following assertions hold.
\begin{enumerate}[(i)]
\item The complex $\St_{12}$ is a  strongly connected $(d-1)$-subcomplex of $\St_{1}$.
\item If there are more than two facets in $\St_{12}$, then, between any two facets of $\St_{12}$ that are different from $F_{12}$, there exists a $(d-1,d-2)$-path in $\St_{12}$ that does not contain the facet $F_{12}$.

\item If $\St_{12}$ contains more than one facet, then the subcomplex $\mathcal A_{{12}}$ of $\St_{12}$ contains a spanning strongly connected $(d-3)$-subcomplex.
\end{enumerate}
\label{lem:technical}
\end{lemma}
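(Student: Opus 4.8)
The plan is to reduce (i) and (ii) to standard facts about the quotient polytope, and (iii) to a single statement about removing two faces from a cube, together with a gluing argument powered by (ii) in the spirit of the proof of \cref{prop:star-minus-facet}. Write $G$ for the smallest face of $P$ containing both $s_1$ and $s_2$; since $F_1$ misses $s_2$ we have $G\subseteq F_{12}$ and $G\not\subseteq F_1$, and a facet of $P$ lies in $\St_{12}$ exactly when it contains $G$. Thus $\St_{12}$ is the star of the face $G$ in $\B(P)$, and as a poset it is isomorphic to the boundary complex of the quotient polytope $P/G$, a polytope of dimension $d-\dim G-1$, with the facets of $\St_{12}$ corresponding to the facets of $P/G$. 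Part (i) then follows because the boundary complex of any polytope is a pure strongly connected complex -- its facet--ridge graph is the graph of the dual polytope, which is connected by Balinski's theorem \cite{Zie95} -- and $\St_{12}$, being a subcomplex of $\St_1$ of geometric dimension $d-1$, inherits this. For (ii): the facet--ridge graph of $\St_{12}$ is the graph of $(P/G)^*$; having more than two facets forces $P/G$, hence $(P/G)^*$, to have dimension at least $2$, so by Balinski's theorem that graph is $2$-connected, and deleting the single vertex corresponding to $F_{12}$ from a $2$-connected graph on at least three vertices leaves it connected -- which is exactly the conclusion.

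For (iii) I would first record that $F_{12}$ contributes nothing to $\mathcal A_{12}$, since all of its vertices lie in $V(F_1)\cup V(F_{12})$; hence, writing $W:=V(\St_{12})\sm(V(F_1)\cup V(F_{12}))$ and $\mathcal M_H:=H[\,V(H)\sm(V(H\cap F_1)\cup V(H\cap F_{12}))\,]$ for a facet $H\ne F_{12}$ of $\St_{12}$, we have $\mathcal A_{12}=\bigcup_{H\ne F_{12}}\mathcal M_H$. Here $H$ is a $(d-1)$-cube and $J_1:=H\cap F_1$, $J_2:=H\cap F_{12}$ are proper faces of $H$ with $s_1\in J_1\cap J_2$. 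The key cube lemma I would isolate is: \emph{if $Q$ is a $k$-cube and $J_1,J_2$ are proper faces of $Q$ with $J_1\cap J_2\ne\emptyset$, then the subcomplex of $Q[\,V(Q)\sm(V(J_1)\cup V(J_2))\,]$ generated by its $(k-2)$-faces is nonempty, contains every vertex of $Q[\,V(Q)\sm(V(J_1)\cup V(J_2))\,]$, and is strongly connected.} Putting $Q=[0,1]^k$ in coordinates so that $J_1,J_2$ become coordinate subcubes through a common vertex, the proof splits according to whether $J_1$ and $J_2$ pin a common coordinate direction (then $Q[\,V(Q)\sm(V(J_1)\cup V(J_2))\,]$ contains a whole facet of $Q$ and one reduces to \cref{lem:cube-face-complex}) or not; in either case one checks $(k-2)$-faces are present and chain up the remaining vertices. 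Applying this with $Q=H$, $k=d-1$ yields, for each facet $H\ne F_{12}$ of $\St_{12}$, a spanning strongly connected $(d-3)$-subcomplex $\mathcal K_H$ of $\mathcal M_H$; setting $\mathcal K:=\bigcup_{H\ne F_{12}}\mathcal K_H$, the complex $\mathcal K$ is $(d-3)$-dimensional and spanning in $\mathcal A_{12}$, and it remains to show it is strongly connected.

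The gluing proceeds as follows. If $\St_{12}$ has exactly two facets then $\mathcal K=\mathcal K_{H^*}$ for the unique $H^*\ne F_{12}$ and we are done; otherwise, by (ii), between any two facets $H,H'$ of $\St_{12}$ different from $F_{12}$ there is a $(d-1,d-2)$-path $H=H_0,\dots,H_m=H'$ in $\St_{12}$ missing $F_{12}$. For consecutive facets the ridge $R_i:=H_i\cap H_{i+1}$ contains $G$, is not contained in $F_1$ (it contains $s_2$), and -- crucially -- is not contained in $F_{12}$: a ridge of $\St_{12}$ lying in $F_{12}$ equals $F_{12}\cap H$ for the unique other facet $H\supseteq R_i$, so it is incident to $F_{12}$ in the facet--ridge graph, whereas the path avoids $F_{12}$. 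Hence, by the same coordinate computation, $\mathcal M_{R_i}:=R_i[\,V(R_i)\sm(V(R_i\cap F_1)\cup V(R_i\cap F_{12}))\,]$ contains a $(d-3)$-face exactly when $R_i\cap F_1$ and $R_i\cap F_{12}$ pin a common coordinate direction of $R_i$, and when they do such a face lies in $\mathcal K_{H_i}\cap\mathcal K_{H_{i+1}}$ and glues the two pieces.

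The main obstacle is the case where $\mathcal M_{R_i}$ has dimension strictly below $d-3$, so that $\mathcal K_{H_i}$ and $\mathcal K_{H_{i+1}}$ share no $(d-3)$-face and cannot be glued directly across $R_i$. For $d=4$ this is harmless: then $\mathcal K_H$ is a graph, its ``ridges'' are single vertices, and $\mathcal M_{R_i}$ always has a vertex (since $J_1,J_2$ share $s_1$ they cannot be opposite facets of $R_i$), so consecutive pieces always meet. For $d\ge 5$ I expect the real work to be here: one needs to show that the ``good'' ridges -- those $R$ for which $R\cap F_1$ and $R\cap F_{12}$ pin a common coordinate direction, equivalently $\mathcal M_R$ has full dimension $d-3$ -- already keep the facet--ridge graph of $\St_{12}$ connected after deleting the vertex $F_{12}$. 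This should follow by analysing when a ridge can be ``bad'': since $G\subseteq R\cap F_{12}$ while $G\not\subseteq F_1$, the faces $R\cap F_1$ and $R\cap F_{12}$ cannot be fully independent inside $R$, which restricts the bad ridges enough that deleting them from the $2$-connected graph of $(P/G)^*$ does not separate the vertices other than $F_{12}$. Combining the two-facet base case, the cube lemma, and this connectivity argument yields a spanning strongly connected $(d-3)$-subcomplex of $\mathcal A_{12}$, establishing (iii).
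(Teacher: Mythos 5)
Your treatment of (i) and (ii) via the quotient $P/G$ is correct and is essentially the paper's own argument in dual language (the paper works with the face $\psi(s_1)\cap\psi(s_2)$ of $P^*$ and Balinski plus Menger). Your ``key cube lemma'' for (iii) is also true and plays the role of the paper's Claim on $\A_{12}^{F}$; the paper proves that claim for the specific pair $H\cap F_1$, $H\cap F_{12}$ in two stages (first remove $V(F_1)$, obtaining ridges of $H$ that all contain the vertex opposite $s_1$, then remove $V(F_{12})$ from each), whereas your one-shot coordinate formulation is a reasonable alternative, though you only sketch its proof.

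The genuine gap is in the gluing step for $d\ge 5$, which you yourself flag as ``the main obstacle'' and resolve only with the unproven expectation that the ``good'' ridges keep the facet--ridge graph of $\St_{12}$ minus $F_{12}$ connected. That auxiliary connectivity statement is not established, and the whole detour is unnecessary: to join the strongly connected pure $(d-3)$-complexes $\mathcal K_{H_i}$ and $\mathcal K_{H_{i+1}}$ across the ridge $R_i$ you do not need a common $(d-3)$-face; a common $(d-4)$-face suffices. Indeed, if $U$ is a $(d-4)$-face of $\mathcal M_{R_i}$, then (by your own cube lemma, whose proof also shows that every fully surviving face is contained in a fully surviving $(k-2)$-face) there are $(d-3)$-faces $W\in\mathcal K_{H_i}$ and $W'\in\mathcal K_{H_{i+1}}$ containing $U$, and either $W=W'$ or $W\cap W'=U$ is a ridge of the $(d-3)$-complex, so $W$ and $W'$ are adjacent in its facet--ridge graph. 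Such a $U$ always exists: $R_i$ is a $(d-2)$-cube, and $R_i\cap F_1$ and $R_i\cap F_{12}$ are proper faces of $R_i$ meeting in $s_1$ (properness of $R_i\cap F_{12}$ follows from your observation that $R_i\not\subseteq F_{12}$, a ridge of $P$ lying in only two facets), so your cube lemma with $k=d-2$ applies and produces a $(k-2)=(d-4)$-face. This is precisely how the paper glues: it extracts a $(d-3)$-face $K_q$ of $\B(J_{i_{q-1}}\cap J_{i_q})-V(F_{12})$ and then a $(d-4)$-face $U_q$ of $\B(K_q)-V(F_1)$, and passes through $(d-3)$-faces of the two pieces containing $U_q$. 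With this replacement your $d=4$ observation becomes the general case and the speculative analysis of ``bad'' ridges can be deleted; as written, however, the proof of (iii) is incomplete.
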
 

\begin{proof} Let us prove (i). Let $\psi$ define the natural anti-isomorphism from the face lattice of $P$ to the face lattice of its dual $P^{*}$. The facets in $\St_{1}$ correspond to the vertices in the facet $\psi(s_{1})$ in $P^{*}$ corresponding to $s_{1}$; likewise for the facets in $\st(s_{2},\B(P))$ and the vertices in $\psi(s_{2})$. The facets in $\St_{12}$ correspond to the vertices in the nonempty face $\psi(s_{1})\cap \psi(s_{2})$ of $P^{*}$. The existence of a facet-ridge path in $\St_{12}$ between any two facets $J_{1}$ and $J_{2}$ of $\St_{12}$ amounts to the existence of a vertex-edge path in $\psi(s_{1})\cap \psi(s_{2})$ between $\psi(J_{1})$ and $\psi(J_{2})$. That $\St_{12}$ is a strongly connected $(d-1)$-complex now follows from the connectivity of the graph of $\psi(s_{1})\cap \psi(s_{2})$ (Balinski's theorem), as desired.

We proceed with the proof of (ii). Let $J_{1}$ and $J_{2}$ be two facets of $\St_{12}$, other than $F_{12}$. If there are more than two facets in $\St_{12}$, then the face $\psi(s_{1})\cap \psi(s_{2})$ is at least bidimensional. As a result, the graph of $\psi(s_{1})\cap \psi(s_{2})$ is at least 2-connected by Balinski's theorem. By Menger's theorem, there are at least two independent vertex-edge paths in $\psi(s_{1})\cap \psi(s_{2})$ between $\psi(J_{1})$ and $\psi(J_{2})$. Pick one such path $L^{*}$  that avoids the vertex $\psi(F_{12})$ of $\psi(s_{1})\cap \psi(s_{2})$. Dualising this path $L^{*}$ gives a $(d-1,d-2)$-path between $J_{1}$ and $J_{2}$ in  $\St_{12}$ that does not contain the facet $F_{12}$.

We finally prove (iii). Assume that $\St_{12}$ contains more than one facet. We need some additional notation.
\begin{itemize}
\item  Let $F$ be a facet in $\St_{12}$ other than $F_{12}$; it exists by our assumption on $\St_{12}$.
\item For a facet $J$ in $\St_{12}$, let $\A_{1}^{J}$ denote the subcomplex  $J-V(F_{1})$; that is, $\A_{1}^{J}$ is the antistar of $J\cap F_{1}$ in $J$.  
\item For a facet $J$ in $\St_{12}$ other than $F_{12}$, let $\A_{12}^{J}$ denote the subcomplex $J-(V(F_{1})\cup V(F_{12}))$, the subcomplex of $J$ induced by $V(J)\setminus (V(F_{1})\cup V(F_{12}))$.  
\end{itemize}
We require the following claim. 
 
\begin{claim} \label{cl:tech-lem-claim2} $\A_{12}^{F}$ contains a spanning strongly connected $(d-3)$-subcomplex $\C^{F}$. 
\end{claim}	
\begin{claimproof}  We first show that $\A_{12}^{F}\ne \emptyset$. Denoting by $s_{1}^{o}$ the vertex in $F$ opposite to $s_{1}$, we have that $s_{1}^{o}$ is not in $F_{1}$ or in $F_{12}$ by \cref{rmk:opposite-vertex-1}. So $s_{1}^{o}$  is in $\A_{12}^{F}$.  
 
Notice that $s_{1}\not \in \A_{1}^{F}$. From \cref{lem:cube-face-complex} it follows that $\A_{1}^{F}$ is a strongly connected $(d-2)$-subcomplex of $F$. Write \[\A_{1}^{F}=\C(R_{1})\cup\cdots \cup \C(R_{m}),\] where $R_{i}$ is a $(d-2)$-face of $F$ for each $i\in [1,m]$. Every $(d-2)$-face in $F$ contains either $s_{1}$ or $s_{1}^{o}$, and since we have $s_{ 1}\not\in R_{i}$ for every $R_i\in \A_{1}^F$, it follows that $s_{1}^o\in R_i$. Consequently no ridge $R_{i}$ is contained in $F_{12}$. 

Let $$\C_{i}:=\B(R_{i})-V(F_{12}).$$ As $R_{i}\not\subset F_{12}$, we have $\dim R_{i}\cap F_{12}\le d-3$. Furthermore, since $s_1^o\in \C_{i}$, $\C_{i}$ is nonempty.  If $R_{i}\cap F_{12}\ne \emptyset$, then  $\C_{i}$ is  the antistar  of $R_{i}\cap F_{12}$ in $R_{i}$, a spanning strongly connected $(d-3)$-subcomplex of $R_{i}$ by \cref{lem:cube-face-complex}. If $R_{i}\cap F_{12}= \emptyset$, then $\C_{i}$ is the boundary complex of $R_{i}$, again a spanning strongly connected $(d-3)$-subcomplex of $R_{i}$.

Let \[\C^{F}:=\bigcup \C_{i}.\] Then the complex $\C^{F}$ is a spanning $(d-3)$-subcomplex of $\A_{12}^{F}$; we show it is strongly connected. 

Take any two $(d-3)$-faces $W$ and $W'$ in $\C^{F}$. We find a $(d-3,d-4)$-path $L$ in $\C^{F}$ between $W$ and $W'$. There exist ridges $R$ and $R'$ in $\A_{1}^{F}$ with $W\subset R$ and $W'\subset R'$.  Since $\A_{1}^{F}$ is a strongly connected $(d-2)$-complex, there is a $(d-2,d-3)$-path $R_{i_1}\ldots R_{i_{p}}$ in $\A_{1}^{F}$ between $R_{i_1}=R$ and $R_{i_{p}}=R'$, with $R_{i_{j}}\in \A_{1}^{F}$ for each $j\in [1,p]$. We will show by induction on the length $p$ of the $(d-2,d-3)$-path $R_{i_1}\ldots R_{i_{p}}$ that there is a $(d-3,d-4)$-path in $\C^F$ between $W$ and $W'$.

If $p=1$, then $R_{i_{1}} = R_{i_{p}} = R = R'$. The existence of the path follows from the strong connectivity of $\C_{i_{1}}$.

Suppose that the claim is true when the length of the path is $p-1$.  We  already established that $s_{1}^{o}\in R_{i_{j}}$ for every $j\in [1,p]$ and that $s^{o}_{1}\not \in F_{12}$. Consequently, we get that $R_{i_{p-1}}\cap R_{i_{p}}\not \subset F_{12}$, and therefore, $R_{i_{p-1}}\cap R_{i_{p}}\cap F_{12}$ is a proper face of $R_{i_{p-1}}\cap R_{i_{p}}$. Hence  the subcomplex $\B_{i_{p-1}}:=\B(R_{i_{p-1}}\cap R_{i_{p}})-V(F_{12})$ of $\B(R_{i_{p-1}}\cap R_{i_{p}})$ is a nonempty, strongly connected $(d-4)$-complex by \cref{lem:cube-face-complex}; in particular, it contains a $(d-4)$-face $U_{i_{p}}$.  Furthermore, $\B_{i_{p-1}}\subset \C_{i_{p-1}}\cap \C_{i_{p}}$.

Let $W_{i_{p-1}}$ and $W_{i_{p}}$ be $(d-3)$-faces in $\C_{i_{p-1}}$ and $\C_{i_{p}}$ containing $U_{i_{p}}$ respectively. By the induction hypothesis, the existence of the $(d-2,d-3)$-path $R_{i_1}\ldots R_{i_{p-1}}$ implies the existence of a $(d-3,d-4)$-path $L_{p-1}$ in $\C^F$ from $W$ to $W_{i_{p-1}}$. The strong connectivity of $\C_{i_{p}}$ gives the existence of a path $L_{p}$ from $W_{i_{p}}$ to $W'$. Finally, the desired $(d-3,d-4)$-path $L$ is the concatenation of these two paths: $L=L_{p-1}W_{i_{p-1}}U_{i_p}W_{i_p}L_p$. The existence of the path $L$ between $W$ and $W'$ completes the proof of \cref{cl:tech-lem-claim2}. 
\end{claimproof}

We are now ready to complete the proof of (iii). The proof goes along the lines of the proof of \cref{cl:tech-lem-claim2}. We let 
\[\St_{12}=\bigcup_{i=1}^{m} \C(J_{i}),\] where the facets $J_{1},\ldots,J_{m}$ are all the facets in $P$ containing $s_{1}$ and $s_{2}$. 

For every $i\in [1,m]$ we let  $\C^{J_{i}}$ be the spanning strongly connected $(d-3)$-subcomplex in $\A_{12}^{J_{i}}$ given by ~\cref{cl:tech-lem-claim2}. And we let \[\C:=\bigcup \C^{J_{i}}.\] Then $\C$ is a spanning $(d-3)$-subcomplex of $\A_{12}$; we show it is strongly connected. 

If there are exactly two facets in $\St_{12}$, namely $F_{12}$ and some other facet $F$, then the complex $\A_{12}$ coincides with the complex $\A_{12}^{F}$. The strong $(d-3)$-connectivity of $\C$ is then settled by \cref{cl:tech-lem-claim2}. Hence assume that there are more than two facets in  $\St_{12}$; this implies that the smallest face containing $s_{1}$ and $s_{2}$ in $\St_{12}$ is at most $(d-3)$-dimensional. 

Take any two $(d-3)$-faces $W$ and $W'$ in $\C$.  Let $J\ne F_{12}$ and $J'\ne F_{12}$ be facets of $\St_{12}$ such that $W\subset J$ and $W'\subset J'$. By (ii), we can find a $(d-1,d-2)$-path $J_{i_{1}}\ldots J_{i_{q}}$ in $\St_{12}$ between $J_{i_{1}}=J$ and $J_{i_{q}}=J'$ such that $J_{i_{j}}\ne F_{12}$ for any $j\in[1,q]$. We will show that a $(d-3,d-4)$-path $L$ exists between $W$ and $W'$ in $\C$, using an induction on the length $q$ of the path $J_{i_{1}}\ldots J_{i_{q}}$.

If $q=1$, then $W$ and $W'$ belong to the same facet $F$ in $\St_{12}$, which is different from $F_{12}$. In this case, $W$ and $W'$  are both in $\A_{12}^{F}$, and consequently, \cref{cl:tech-lem-claim2} gives the desired $(d-3,d-4)$-path between $W$ and $W'$ in $\A_{12}^{F}\subseteq \C$.

Suppose that the induction hypothesis holds when the length of the path is $q-1$. 
First, we show that there exists a $(d-4)$-face $U_{q}$ in $C^{J_{i_{q-1}}}\cap C^{J_{i_{q}}}$. As $J_{i_{q-1}},J_{i_{q}}\ne F_{12}$, we obtain that $\B(J_{i_{q-1}}\cap J_{i_{q}})-V(F_{12})$ is a nonempty, strongly connected $(d-3)$-subcomplex (\cref{lem:cube-face-complex}); in particular, it contains a $(d-3)$-face $K_{q}$. The complex  $\B(K_{q})-V(F_{1})$ is nonempty because  $s_{1}\in F_1$ and $s_{1}\notin K_{q}$ (since $K_{q}$ does not contain any vertex from $F_{12}$). Therefore $\B(K_{q})-V(F_{1})$ is a  strongly connected $(d-4)$-subcomplex by \cref{lem:cube-face-complex}. In particular, $\B(K_{q})-V(F_{1})$ contains a $(d-4)$-face $U_{q}$.


Pick $(d-3)$-faces $W_{q-1}\in \C^{J_{i_{q-1}}}$ and $W_{q}\in \C^{J_{i_{q}}}$ such that both contain the $(d-4)$ face $U_{q}$. The induction hypothesis tells us that there exists a $(d-3,d-4)$-path $L_{q-1}$ from $W$ to $W_{q-1}$ in $\C$. And the strong $(d-3)$-connectivity of $\C^{J_{i_{q}}}$ ensures that there exists a $(d-3,d-4)$-path $L_q$ from $W_{q}$ to $W'$. By concatenating these two paths, we can obtain the path $L=WL_{q-1}W_{q-1}U_{q}W_{q}L_{q}W'$. This completes the proof of the lemma.\end{proof}

\section{Linkedness of cubical polytopes}
\label{sec:cubical-linkedness}	 

The aim of this section is to prove that, for every $d\ne 3$, a cubical $d$-polytope is $\floor{(d+1)/2}$-linked (\cref{thm:cubical}). It suffices to prove \cref{thm:cubical} for odd $d\ge 5$; since $\floor{d/2}=\floor{(d+1)/2}$ for even $d$, \cref{prop:weak-linkedness-cubical} trivially establishes \cref{thm:cubical} in this case.

The proof of \cref{thm:cubical} heavily relies on \cref{lem:star-cubical}. To state the lemma we require the following definition.

 \begin{definition}[Configuration $d$F]\label{def:Conf-dF} Let $d\ge 3$ be odd and let $X$ be a set of at least $d+1$ terminals in a cubical $d$-polytope $P$. In addition, let $Y$ be a  labelling and pairing of the vertices in $X$. A terminal of $X$, say $s_{1}$, is in {\it Configuration $d$F} if the following conditions are satisfied:
 \begin{enumerate}
 \item[(i)]  at least $d+1$ vertices of $X$ appear in a facet $F$ of $P$;
 \item[(ii)]  the terminals in the pair $\{s_{1}, t_{1}\}\in Y$ are at distance $d-1$ in $F$ (that is, $\dist_{F}(s_{1},t_{1})=d-1$); and
 \item[(iii)] the neighbours of $t_{1}$ in $F$ are all vertices of $X$. 
 \end{enumerate}
\end{definition}
 
Figure~\ref{fig:conf-Df} illustrates examples of Configuration $d$F.
\begin{figure}  
\includegraphics{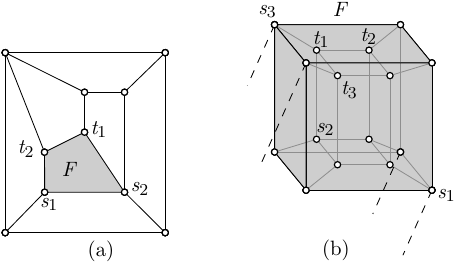}
\caption{Examples of Configuration $dF$. (a) A cubical 3-polytope where $s_1$ is in Configuration 3F. (b) A facet of a cubical 5-polytope where $s_1$ is in Configuration 5F. }\label{fig:conf-Df} 
\end{figure}

\begin{lemma} Let $d\ge 5$ be odd and let $k:=(d+1)/2$. Let $s_{1}$ be a vertex in a cubical $d$-polytope and let $\St_{1}$  be the star of  $s_{1}$ in the polytope. Moreover, let $Y:=\{\{s_{1},t_{1}\},\ldots,\{s_{k},t_{k}\}\}$ be a labelling and pairing of $2k$ distinct vertices of $\St_{1}$.  Then the set $Y$ is linked in $\St_{1}$ if the vertex $s_{1}$ is not in Configuration $d$F.  
\label{lem:star-cubical} 
\end{lemma}

\begin{remark}
It is easy to see that when the vertex $s_1$ is in Configuration $d$F, the set $Y$ is not linked in $\St_{1}$. Indeed in this case, since $\dist_{F_{1}}(s_{1},t_{1})=d-1$ there is only one facet in $\St_{1}$ in $F_1$ that contains $t_1$.
 Then all the neighbours of $t_{1}$ in $F_{1}$, and thus, in $\St_{1}$ are in $X$. As a consequence, every $s_{1}-t_{1}$ path in $\St_{1}$ must touch $X$. Hence $Y$ is not linked.  
\end{remark}
	 
We defer the proof of \cref{lem:star-cubical} for $d\geq 7$ to Subsection~\ref{subsec:star-cubical}, while the case $d=5$ is proved in \Cref{app:lemmad5}. We are now ready to prove our main result, assuming \cref{lem:star-cubical}. For a set $Y:=\{\{s_{1},t_{1}\},\ldots,\{s_{k},t_{k}\}\}$ of pairs of vertices in a graph, a {\it $Y$-linkage} $\{L_{1},\ldots,L_{k}\}$ is a set of disjoint paths with the path $L_{i}$ joining the pair $\{s_{i},t_{i}\}$ for each $i\in [1,k]$.  For a path $L:=u_{0}\ldots u_{n}$ we often write $u_{i}Lu_{j}$ for $0\le i\le j\le n$  to denote the subpath $u_{i}\ldots u_{j}$. We will rely on the following definition.
	
\begin{definition}[Projection $\pi$]\label{def:projection}
	For a pair of opposite facets $\{F,F^{o}\}$ of $Q_{d}$, define a projection $\pi^{Q_{d}}_{F^{o}}$ from $Q_{d}$ to $F^{o}$ by sending a vertex $x\in F$ to the unique neighbour $x^{p}_{F^o}$ of $x$ in $F^{o}$, and  a vertex $x\in F^{o}$ to itself (that is, $\pi^{Q_{d}}_{F^{o}}(x)=x$); write $\pi^{Q_{d}}_{F^o}(x)=x^{p}_{F^o}$ to be precise, or write $\pi(x)$ or $x^{p}$ if the cube $Q_{d}$ and the facet $F^o$ are understood from the context. 
\end{definition}

We extend this projection to sets of vertices: given a pair $\{F,F^{o}\}$ of opposite facets and a set $X\subseteq V(F)$, the projection $X^{p}_{F^{o}}$ or $\pi^{Q_{d}}_{F^{o}}(X)$ of $X$ onto $F^{o}$ is the set of the projections  of the vertices in $X$ onto $F^{o}$. For an $i$-face $J\subseteq F$, the projection $J^{p}_{F^{o}}$ or $\pi_{F^{o}}^{Q_{d}}(J)$ of $J$ onto $F^{o}$ is the $i$-face consisting of the projections of all the vertices of $J$ onto $F^{o}$. For a pair $\{F,F^{o}\}$ of opposite facets in $Q^{d}$, the restrictions of the projection $\pi_{F^{o}}$ to $F$  and the projection $\pi_{F}$ to $F^{o}$ are bijections.

\begin{proof}[Proof of \cref{thm:cubical} (Linkedness of cubical polytopes)]

\cref{prop:weak-linkedness-cubical}  settled the case of even $d$, so we assume $d$ is odd.

Let $d$ be odd and $d\ge 5$ and let $k:=(d+1)/2$.  Let $X$ be any set of $2k$ vertices in the graph $G$ of a cubical $d$-polytope $P$. Recall the vertices in $X$ are called terminals. Also let $Y:=\{\{s_{1},t_{1}\},\ldots,\{s_{k},t_{k}\}\}$ be a labelling and pairing of the vertices of $X$. We aim to find a $Y$-linkage $\{L_{1},\ldots,L_{k}\}$ in $G$ where $L_{i}$ joins the pair $\{s_{i},t_{i}\}$ for $i=1,\ldots,k$. 

 For a set of vertices $X$ of a graph $G$, a path in $G$ is called {\it $X$-valid} if no inner vertex of the path is in $X$. The {\it distance} between two vertices $s$ and $t$ in $G$, denoted $\dist_{G}(s,t)$, is the length of a shortest path between the vertices.

The first step of the proof is to reduce the analysis space from the whole polytope to a more manageable space, the star $\St_1$ of a terminal vertex in the boundary complex of $P$, say that of $s_{1}$. We do so by considering $d=2k-1$ disjoint paths $S_{i}:=s_{i}-\St_1$ (for each $i\in [2,k]$) and $T_{j}:=t_{j}-\St_1$ (for each  $j\in [1,k]$) from the terminals into $\St_{1}$. Here we resort to the $d$-connectivity of $G$. In addition, let $S_{1}:=s_{1}$. We then denote by $\bar s_{i}$ and $\bar t_{j}$ the intersection of the paths $S_{i}$ and $T_{j}$ with $\St_1$. Using the vertices  $\bar s_{i}$ and $\bar t_{i}$ for $i\in [1,k]$, define sets $\bar X$ and $\bar Y$ in $\St_{1}$, counterparts to the sets $X$ and $Y$ of $G$. In an abuse of terminology, we also say that the vertices $\bar s_{i}$ and $\bar t_{i}$ are terminals. In this way, the existence of a $\bar Y$-linkage $\{\bar L_{1},\ldots,\bar L_{k}\}$ with $\bar L_{i}:=\bar s_{i}-\bar t_{i}$ in $G(\St_{1})$ implies the existence of a $Y$-linkage $\{L_{1},\ldots,L_{k}\}$ in $G(P)$, since each path $\bar L_{i}$ ($i\in [1,k]$) can be extended with the paths $S_{i}$ and $T_{i}$ to obtain the corresponding path $L_{i}=s_{i}S_{i}\bar s_{i}\bar L_{i}\bar t_{i}T_{i}t_{i}$.

The second step of the proof is to find a $\bar Y$-linkage $\{\bar L_{1},\ldots,\bar L_{k}\}$ in $G(\St_{1})$, whenever possible.  According to \cref{lem:star-cubical}, there is a $\bar Y$-linkage in $G(\St_{1})$ provided that the vertex $s_{1}$ is not in Configuration $d$F. The existence of a $\bar Y$-linkage in turn gives the existence of a $Y$-linkage, and completes the proof of the theorem in this case.

The third and final step is to deal with Configuration $d$F for $s_{1}$. Hence assume that the vertex $s_{1}$  is in Configuration $d$F. This is implies that
\begin{enumerate}	
 \item[(i)] there exists a unique facet $F_{1}$ of $\St_1$ containing $\bar t_{1}$; that 
 \item[(ii)]   $|\bar X\cap V(F_{1})|= d+1$; and that
  \item[(iii)]  $\dist_{F_{1}}(\bar s_{1},\bar t_{1})=d-1$ and all the $d-1$ neighbours of $\bar t_{1}$ in $F_{1}$, and thus in $\St_1$, belong to $\bar X$.
\end{enumerate}	

Let $R$ be a $(d-2)$-face of $F_{1}$ containing the vertex $s_1^o$ opposite to $s_1$ in $F_1$, then $s_{1}\not \in R$, and $\bar{t}_1 = s_1^o \in R$. Denote by $R_{F_{1}}$ the  $(d-2)$-face of $F_{1}$ disjoint from $R$. Let $J$ be the other facet of $P$ containing $R$ and let $R_{J}$ denote the  $(d-2)$-face of $J$ disjoint from $R$. Then $R_{J}$ is disjoint from $F_{1}$. Partition the vertex set $V(R_{J})$ of $R_{J}$ into the vertex sets of two induced subgraphs $G_{\text{bad}}$ and $G_{\text{good}}$ such that $G_{\text{bad}}$ contains the neighbours of the terminals in $R$, namely $V(G_{\text{bad}})=\pi_{R_{J}}^{J}(\bar X\cap V(R))$ and $V(G_{\text{good}})=V(R_{J})\setminus V(G_{\text{bad}})$. Then $\pi_{R}^{J}(V(G_{\text{bad}}))\subseteq \bar X$ and $\pi_{R}^{J}(V(G_{\text{good}}))\cap \bar X=\emptyset$.  See \cref{fig:Aux-Linked-Thm}(a).

Consider again the paths $S_{i}$ and $T_{j}$ that bring the vertices $s_{i}$ ($i\in [2,k]$) and $t_{j}$ ($j\in [1,k]$) into $\St_1$. Also recall that the paths $S_{i}$ and $T_{j}$ intersect $\St_1$ at $\bar s_{i}$ and $\bar t_{j}$, respectively. We distinguish two cases: either at least one path $S_{i}$ or $T_{j}$ touches $R_{J}$ or no  path  $S_{i}$ or $T_{j}$ touches $R_{J}$. In the former case we redirect one aforementioned path $S_{i}$ or $T_{j}$ to break Configuration $d$F for $s_{1}$ and use \cref{lem:star-cubical}, while in the latter case we find the $\bar Y$-linkage using the antistar of $s_{1}$.  
\begin{case} Suppose at least one path $S_{i}$ or $T_{j}$ touches $R_{J}$. \end{case}

 If possible, pick one such path, say $S_{\ell}$, for which it holds that $V(S_{\ell})\cap V(G_{\text{good}})\ne \emptyset$. Otherwise, pick one such path, say $S_{\ell}$, that does not contain $\pi_{R_{J}}^{J}(t_{1})$, if it is possible. If none of these two selections are possible, then there is exactly one path $S_{i}$ or $T_{j}$ touching $R_{J}$, say $S_{\ell}$, in  which  case  $\pi_{R_{J}}^{J}(t_{1})\in V(S_{\ell})$.

We replace the path $S_{\ell}$ by a new path $s_{\ell}-\St_{1}$ that is disjoint from the other paths $S_{i}$ and $T_{j}$ and we replace the old terminal $\bar s_{\ell}$ by a new terminal that causes $s_{1}$ not to be in Configuration $d$F. First suppose that there exists $s_{\ell}'$  in  $V(S_{\ell})\cap V(G_{\text{good}})$. Then the old  path $S_{\ell}$ is replaced by the path $s_{\ell}S_{\ell}s'_{\ell}\pi_{R}^{J}(s_{\ell}')$, and the old terminal $\bar s_{\ell}$ is replaced by $\pi_{R}^{J}(s_{\ell}')$. Now suppose that $V(S_{\ell})\cap V(G_{\text{good}})=\emptyset$. Then every  path $S_{i}$ and $T_{j}$ that touches $R_{J}$ is disjoint from $G_{\text{good}}$. Denote by $s_{\ell}'$ the first intersection of $S_{\ell}$ with $R_J$. Let $M_{\ell}$ be a shortest path in $R_{J}$ from $s_{\ell}'\in V(G_{\text{bad}})$ to a vertex  $s_{\ell}''\in V(G_{\text{good}})$.  By our selection of $S_{\ell}$ this path $M_{\ell}$ always exists and is disjoint from any $S_i$ for $i\neq \ell$. If $s_{\ell}''\in V(G_{\text{good}})\setminus V(\St_1)$ then the old  path $S_{\ell}$ is replaced by the path $s_{\ell}S_{\ell}s'_{\ell}M_{\ell}s''_{\ell}\pi_{R}^{J}(s_{\ell}'')$, and the old terminal $\bar s_{\ell}$ is replaced by $\pi_{R}^{J}(s_{\ell}'')$. If instead  $s_{\ell}''\in V(G_{\text{good}})\cap V(\St_1)$ then the old  path $S_{\ell}$ is replaced by the path $s_{\ell}S_{\ell}s'_{\ell}M_{\ell}s''_{\ell}$, and the old terminal $\bar s_{\ell}$ is replaced by $s_{\ell}''$. Refer to \cref{fig:Aux-Linked-Thm}(b) for a depiction of this case.

In any case, the replacement of the old vertex $\bar s_{\ell}$ with the new $\bar s_{\ell}$  forces $s_{1}$ out of Configuration $d$F, and we can apply \cref{lem:star-cubical} to find a $\bar Y$-linkage.  The case of $S_{\ell}$ being equal to $T_{1}$ requires a bit more explanation in order to make sure that the vertex  $s_{1}$ does not end up in a new configuration $d$F.  Let $\A_1$ be the antistar  of $F_{1}$ in $\St_1$.  The new vertex $\bar t_{1}$ is either in $F_{1}$ or in $\A_{1}$. If the new $\bar t_{1}$ is in $F_{1}$ then it is plain that $s_{1}$ is not in Configuration $d$F. If the new vertex $\bar t_{1}$  is in $\A_{1}$,  then a new facet $F_{1}$ containing $s_{1}$ and the new $\bar t_{1}$ cannot contain all the $d-1$ neighbours of the old $\bar t_{1}$ in the old $F_{1}$, since the intersection between the new and the old $F_{1}$ is at most $(d-2)$-dimensional and no $(d-2)$-dimensional face of the old $F_{1}$ contains all the $d-1$ neighbours of the old $\bar t_{1}$. This completes the proof of the case. 

\begin{case} For any $(d-2)$-face $R$ in $F_{1}$ that contains $\bar t_{1}$, the aforementioned ridge $R_{J}$ in the facet $J$ is disjoint from all the paths $S_{i}$ and $T_{j}$. \end{case}
 

There is a unique neighbour of $\bar t_{1}$ in $R_{F_{1}}$, say $\bar s_{k}$, while every other neighbour of $\bar t_{1}$ in $F_{1}$ is in $R$. Let $\bar X^{p}:=\pi_{R_{J}}^{J}(\bar X\setminus\{s_{1},\bar s_{k},\bar t_{k}\})$ and let $s_{1}^{pp}:=\pi_{R_{J}}^{J}(\pi_{R}^{F_{1}}(s_{1}))$. See \cref{fig:Aux-Linked-Thm}(c). The $d-1$ vertices in $\bar X^{p}\cup\{s_{1}^{pp}\}$ can be linked in $R_{J}$ (\cref{thm:cube-linkedness}) by a linkage $\{\bar L_{1}',\ldots,\bar L_{k-1}'\}$. Observe that, for the special case of $d=5$ where $R_{J}$ is a 3-cube,  the sequence $s_{1}^{pp},  \pi_{R_{J}}^{J}(\bar s_{2}), \pi_{R_{J}}^{J}(\bar t_{1}),\pi_{R_{J}}^{J}(\bar t_{2})$ cannot be in a 2-face in cyclic order, since $\dist_{R_{J}}(s_{1}^{pp},\pi_{R_{J}}^{J}(\bar t_{1}))=3$. The linkage $\{\bar L_{1}',\ldots,\bar L_{k-1}'\}$ together with the two-path $\bar L_{k}:=\bar s_{k}\pi_{R_{F_{1}}}^{F_{1}}(\bar t_{k})\bar t_{k}$ can be extended to a linkage $\{\bar L_{1},\ldots, \bar L_{k}\}$ given by  
 
 \[\bar L_{i}:=\begin{cases}
 s_{1}\pi_{R}^{F_{1}}(s_{1})s_{1}^{pp}\bar L_{1}'\pi_{R_{J}}^{J}(\bar t_{1})\bar t_{1},& \text{for $i=1$;}\\ 
 \bar s_{i}\pi_{R_{J}}^{J}(\bar s_{i})\bar L_{i}'\pi_{R_{J}}^{J}(\bar t_{i})\bar t_{i},& \text{for $i\in [2,k-1]$;}\\
 \bar s_{k}\pi_{R_{F_{1}}}^{F_{1}}(\bar t_{k})\bar t_{k},& \text{for $i=k$.}
 \end{cases}\] 

Concatenating the paths $S_{i}$ (for all $i\in [2,k]$) and $T_{j}$ (for all $j\in [1,k]$) with the linkage $\{\bar L_{1},\ldots, \bar L_{k}\}$ gives the desired $Y$-linkage. This completes the proof of the case, and with it the proof of the theorem. 
\end{proof}
	 
\begin{figure}  
\includegraphics{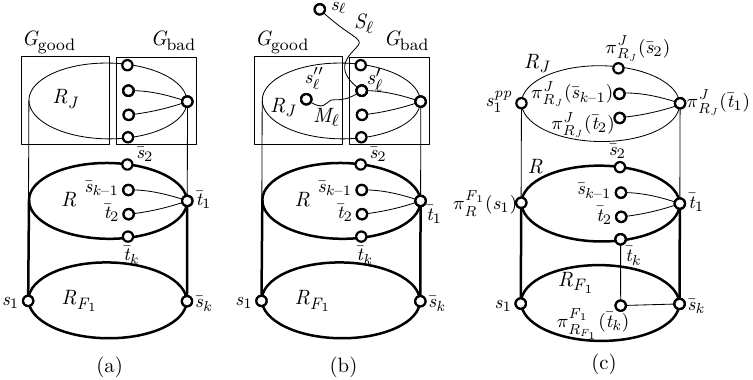}
\caption{Auxiliary figure for \cref{thm:cubical}, where the facet $F_{1}$ is highlighted in bold. (a) A depiction of the subgraphs $G_{\text{good}}$ and $G_{\text{bad}}$ of $R_{J}$. (b) A configuration where a path $S_{i}$ or $T_{j}$ touches $R_{J}$. (c) A configuration where no path $S_{i}$ or $T_{j}$ touches $R_{J}$.}\label{fig:Aux-Linked-Thm} 
\end{figure}

\subsection{Proof of \cref{lem:star-cubical} for $d\ge 7$}    
 \label{subsec:star-cubical}

Before starting the proof, we require several results.

\begin{proposition}[{\cite[Sec.~2]{Sal67}}]\label{prop:connected-complex-connectivity} For every $d\ge 1$, the graph of a strongly connected $d$-complex is $d$-connected. 
\end{proposition}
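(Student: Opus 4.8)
The plan is to induct on the number of facets of $\C$, combining two ingredients: the classical theorem of Balinski that the graph of a $d$-polytope is $d$-connected, which settles the case of a complex with a single facet, and an elementary gluing lemma that adds facets one at a time.

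First I would isolate the gluing step: \emph{if $G_{1}$ and $G_{2}$ are $d$-connected graphs with $|V(G_{1})\cap V(G_{2})|\ge d$, then $G_{1}\cup G_{2}$ is $d$-connected.} This is immediate. Indeed $|V(G_{1}\cup G_{2})|\ge d+1$, and given any vertex set $S$ with $|S|\le d-1$, the inequality $|V(G_{1})\cap V(G_{2})|\ge d>|S|$ provides a vertex $w\in(V(G_{1})\cap V(G_{2}))\sm S$; since each $G_{i}-S$ is connected (it is $G_{i}$ minus fewer than $d$ vertices) and contains $w$, every vertex of $(G_{1}\cup G_{2})-S$ lies in the component of $w$, so $(G_{1}\cup G_{2})-S$ is connected.

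For the induction, let $\C$ be a strongly connected $d$-complex with $m$ facets. If $m=1$, then $G(\C)$ is the graph of a $d$-polytope, hence $d$-connected by Balinski's theorem. If $m\ge 2$, consider the dual graph $\Gamma$ of $\C$: its vertices are the facets of $\C$, and two facets are adjacent when their intersection is a ridge of $\C$. Strong connectivity says $\Gamma$ is connected, so it has a spanning tree, which, having $m\ge 2$ vertices, has a leaf $F$. Deleting $F$, the remaining $m-1$ facets together with all their faces form a pure $d$-complex $\C'$ whose dual graph is $\Gamma-F$, still connected; and since, for facets of $\C'$, meeting in a ridge of $\C'$ is the same condition as meeting in a ridge of $\C$, the complex $\C'$ is again strongly connected. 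By the inductive hypothesis $G(\C')$ is $d$-connected. As $\Gamma$ is connected with at least two vertices, $F$ is not isolated, so it shares a ridge $R$ with some facet of $\C'$; since $R$ is a $(d-1)$-polytope it has at least $d$ vertices, and $V(R)\subseteq V(\C')\cap V(F)$. Applying the gluing lemma to $G(\C')$ and $G(F)$, whose union is $G(\C)$, shows that $G(\C)$ is $d$-connected.

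The gluing lemma and Balinski's theorem carry almost all the weight here, so the part needing care is the bookkeeping around strong connectivity: one must verify that excising a facet chosen as a leaf of a spanning tree of the dual graph leaves a strongly connected complex, and that the ridge this facet shares with the rest still supplies the $\ge d$ common vertices on which the gluing lemma relies.
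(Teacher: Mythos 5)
Your proof is correct. Note that the paper does not prove this proposition at all --- it is quoted from Sallee's 1967 paper --- so there is no internal argument to compare against; your induction on the number of facets, with Balinski's theorem as the base case and the gluing lemma $\bigl(|V(G_{1})\cap V(G_{2})|\ge d$ implies $G_{1}\cup G_{2}$ is $d$-connected$\bigr)$ as the inductive step, is the standard proof of this fact and is essentially Sallee's. The two points you flag as needing care are handled correctly: removing a leaf of a spanning tree of the dual graph does leave a strongly connected pure $d$-complex (since for two facets of $\C'$ the condition ``$F_{1}\cap F_{2}$ is a $(d-1)$-face'' is the same in $\C$ and in $\C'$), and the shared ridge, being a $(d-1)$-polytope, indeed supplies at least $d$ common vertices.
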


\begin{proposition}[{\cite[Prop. 27]{BuiPinUgo20a}}]
\label{prop:link-cubical} For every $d\ge 2$ such that $ d\ne 3$, the link of a vertex in a $(d+1)$-cube $Q_{d+1}$ is $\floor{(d+1)/2}$-linked.  
\end{proposition}

Let  $Z$ be a set of vertices in the graph of a $d$-cube $Q_{d}$. If, for some pair of opposite facets $\{F,F^o\}$, the set $Z$ contains both a vertex $z\in V (F)\cap Z$ and its projection $z_{F^{o}}^p\in V (F^o)\cap Z$, we say that the pair $\{F,F^o\}$ is {\it associated} with the set $Z$ in $Q_{d}$ and that $\{z,z^p\}$ is an {\it associating pair}.  Note that an associating pair can associate only one pair of opposite facets.

The next lemma lies at the core of our methodology. 
 
\begin{lemma}[{\cite[Lemma 8]{BuiPinUgo20a}}]\label{lem:facets-association} Let $Z$ be a nonempty subset of $V(Q_{d})$. Then the number of pairs $\{F,F^o\}$ of opposite facets associated with $Z$ is at most $|Z|-1$. 
\end{lemma} 	
	
The relevance of the lemma stems from the fact that a pair of opposite facets $\{F,F^{o}\}$ not associated with a given set of vertices $Z$ allows each vertex $z$ in $Z$ to have ``free projection''; that is, for every $z\in Z\cap V(F)$ the projection $\pi_{F^o}(z)$ is not in $Z$, and for $z\in Z\cap V(F^{o})$ the projection $\pi_{F}(z)$ is not in $Z$.	

\begin{lemma}[{\cite[Sec.~3]{WerWot11}}]\label{lem:k-linked-subgraph} Let $G$ be a $2k$-connected graph and let $G'$ be a $k$-linked subgraph of $G$. Then $G$ is $k$-linked.
\end{lemma}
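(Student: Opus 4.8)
The plan is to exploit the high connectivity of $G$ to ``funnel'' the terminals into the $k$-linked subgraph $G'$, carry out the required linkage inside $G'$, and then splice the pieces together. Fix an arbitrary set $X:=\{s_{1},t_{1},\ldots,s_{k},t_{k}\}$ of $2k$ terminals of $G$ with pairing $Y:=\{\{s_{1},t_{1}\},\ldots,\{s_{k},t_{k}\}\}$; the task is to produce a $Y$-linkage in $G$. Since $G'$ is $k$-linked it has at least $2k$ vertices, so both $X$ and $B:=V(G')$ have cardinality at least $2k$.

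The first step is to route the terminals to $G'$. As $G$ is $2k$-connected and $|X|,|B|\ge 2k$, \cref{thm:Menger-consequence} supplies $2k$ disjoint $X-B$ paths. Because $|X|=2k$ and the paths are disjoint, each terminal $a\in X$ is the initial vertex of exactly one of them, which I call $P_{a}$; this path ends at a vertex $\phi(a)\in V(G')$ and, being an $X-B$ path, meets $V(G')$ only at $\phi(a)$ and meets $X$ only at $a$. The endpoints being distinct, $\phi$ is injective, so $\phi(s_{1}),\phi(t_{1}),\ldots,\phi(s_{k}),\phi(t_{k})$ are $2k$ distinct vertices of $G'$.

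The second step links inside $G'$. Applying the $k$-linkedness of $G'$ to the pairing $\{\{\phi(s_{i}),\phi(t_{i})\}\}_{i\in[1,k]}$ yields disjoint paths $Q_{1},\ldots,Q_{k}$ in $G'$ with $Q_{i}$ joining $\phi(s_{i})$ to $\phi(t_{i})$. Finally I would concatenate, setting $L_{i}:=P_{s_{i}}\cup Q_{i}\cup P_{t_{i}}$, that is, the walk from $s_{i}$ along $P_{s_{i}}$ to $\phi(s_{i})$, then along $Q_{i}$ to $\phi(t_{i})$, then along $P_{t_{i}}$ back to $t_{i}$, and verify that $\{L_{1},\ldots,L_{k}\}$ is a $Y$-linkage.

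The crux -- and the only place any care is needed -- is the disjointness of the $L_{i}$. The paths $P_{a}$ are pairwise disjoint and the paths $Q_{i}$ are pairwise disjoint, so the only possible clashes are between some $P_{a}$ and some $Q_{j}$. But $P_{a}$ lies in $V(G')$ only at its endpoint $\phi(a)$, whereas each $Q_{j}$ lies entirely in $G'$; hence any clash must occur at $\phi(a)$, and since the $\phi$-values are distinct and the $Q_{j}$ are disjoint, $\phi(a)$ lies on $Q_{j}$ precisely when $Q_{j}$ is the path incident to $a$. Thus each $P_{a}$ meets the interior linkage exactly at its intended splice point, so the $L_{i}$ are genuine pairwise-disjoint paths joining $s_{i}$ to $t_{i}$, which forces $G$ to be $k$-linked. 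The essential structural input is exactly the endpoint-only property of the $X-B$ paths guaranteed by \cref{thm:Menger-consequence}; everything else is routine bookkeeping.
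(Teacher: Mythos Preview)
Your proof is correct and is the standard argument; the paper itself does not give a proof of this lemma but merely cites it from \cite[Sec.~3]{WerWot11}, where the same funnel-into-$G'$-and-splice strategy is used. The one point that deserves a word of justification is your claim that $\phi(a)$ lies on $Q_{j}$ only when $Q_{j}$ is the path through $a$: this holds because $\phi(a)$ is an endpoint of $Q_{i}$ for the index $i$ corresponding to $a$, and the $Q_{j}$ are pairwise vertex-disjoint, so $\phi(a)$ cannot appear on any other $Q_{j}$---your reasoning is right, just slightly compressed.
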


\begin{proposition}\label{prop:star-minus-facet-linkedness} Let $F$ be a facet in the star $\St$  of a vertex in a cubical $d$-polytope. Then, for every $d\ge 2$, the antistar of $F$ in $\St$ is $\floor{(d-2)/2}$-linked. 
 \end{proposition}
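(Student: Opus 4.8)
The plan is to combine a connectivity estimate for the antistar with the linkedness of a single cube that it contains, via the ``linked subgraph'' mechanism of \cref{lem:k-linked-subgraph}. Write $k:=\floor{(d-2)/2}$ and let $\A$ be the antistar of $F$ in $\St$. For $d\in\{2,3\}$ one has $k=0$ and the statement is vacuous, so I assume $d\ge 4$. By \cref{prop:star-minus-facet}, $\A$ is a strongly connected $(d-2)$-complex, so \cref{prop:connected-complex-connectivity} makes $G(\A)$ a $(d-2)$-connected graph, and in particular a $2k$-connected one since $2k=2\floor{(d-2)/2}\le d-2$. By \cref{lem:k-linked-subgraph} it then suffices to produce a $k$-linked subgraph of $G(\A)$.

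The crux is to exhibit a $(d-2)$-cube sitting inside $\A$. Let $v$ be the vertex with $\St=\st(v,\B(P))$. As $F$ is a $(d-1)$-cube through $v$, it has a facet $R$ with $v\in R$; since $R$ is a $(d-2)$-face of $P$, it is a ridge of $P$ and so lies in exactly two facets of $P$, one being $F$ — call the other $F'$. Then $v\in R\subseteq F'$, so $F'$ is a facet in $\St$, and $F\cap F'=R$: this intersection is a face of $F$ containing the facet $R$, hence it is $R$ or $F$, and it cannot be $F$ because $F'\ne F$. Let $R^{o}$ be the facet of the $(d-1)$-cube $F'$ opposite to $R$, a $(d-2)$-cube. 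Since $R^{o}\cap R=\emptyset$ while $V(F)\cap V(F')=V(R)$, I get $V(R^{o})\cap V(F)\subseteq V(R^{o})\cap V(R)=\emptyset$, so $R^{o}$ is a face of $\A$ and $G(R^{o})$ is a subgraph of $G(\A)$.

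To close the argument, I apply the linkedness of the cube to $R^{o}$: by \cref{prop:weak-linkedness-cubical}, the $(d-2)$-cube $R^{o}$ is $\floor{(d-2)/2}=k$-linked (when $d=4$ this merely records that the $4$-cycle is connected). Feeding the $2k$-connected graph $G(\A)$ and its $k$-linked subgraph $G(R^{o})$ into \cref{lem:k-linked-subgraph} shows that $G(\A)$ is $k$-linked, that is, the antistar of $F$ in $\St$ is $\floor{(d-2)/2}$-linked.

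I expect the second paragraph to be the only non-routine point: one must make sure the antistar genuinely contains a $(d-2)$-cube as a face, and here the ``take a ridge through $v$ in $F$, pass to the other facet of $P$ on it, then take the opposite facet'' construction does the job. The remaining ingredients are bookkeeping — the degenerate dimensions $d\le 3$ must be cleared away first (there $k=0$), and one should note once that $G(\A)$, being $(d-2)$-connected with $d\ge 4$, has more than $d-2\ge 2k$ vertices, so the $k$-linkedness statement is meaningful.
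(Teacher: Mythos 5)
Your proof is correct and follows essentially the same route as the paper's: locate a $(d-2)$-cube in the antistar via ``ridge of $F$ through the apex, other facet on that ridge, opposite facet,'' then combine its linkedness with the $(d-2)$-connectivity of the antistar through \cref{lem:k-linked-subgraph}. The only (harmless) difference is that you cite \cref{prop:weak-linkedness-cubical} for the $\floor{(d-2)/2}$-linkedness of that cube, which lets you treat all $d\ge 4$ uniformly, whereas the paper invokes \cref{thm:cube-strong-linkedness} (inapplicable to the $3$-cube) and therefore disposes of $d=4,5$ separately via connectivity.
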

 \begin{proof} Let $\St$ be the star of a vertex $s$ in a cubical $d$-polytope and let $F$ be a facet in the star $\St$. Let  $\mathcal A$ denote the antistar of $F$ in $\St$. 
 
The case of $d=2,3$ imposes no demand on $\mathcal A$, while the case $d=4,5$ amounts to establishing that the graph of $\mathcal A$ is connected. The graph of $\A$ is in fact $(d-2)$-connected, since $\mathcal A$ is a strongly connected $(d-2)$-complex (\cref{prop:star-minus-facet}). So assume $d\ge 6$. 
 
 There is a $(d-2)$-face $R$ in  $\A$. Indeed, take a $(d-2)$-face $R'$ in $F$ containing $s$ and consider the other facet $F'$ in $\St$ containing $R'$; the  $(d-2)$-face of $F'$ disjoint from $R'$ is the desired $R$.  By \cref{thm:cube-linkedness} the ridge $R$ is $\floor{(d-1)/2}$-linked but we only require it to be $\floor{(d-2)/2}$-linked. By \cref{prop:connected-complex-connectivity,prop:star-minus-facet}  the graph of $\A$ is $(d-2)$-connected.  Combining the linkedness of $R$ and the connectivity of the graph of $\A$ settles the proposition by virtue of \cref{lem:k-linked-subgraph}. 
 \end{proof}

For a pair of opposite facets $\{F,F^{o}\}$ in a cube, the restriction of the projection $\pi_{F^{o}}:Q_{d}\to F^{o}$ (\cref{def:projection}) to $F$ is a bijection from $V(F)$ to $V(F^{o})$. With the help of $\pi$, given the star $\St$ of a vertex $s$ in a cubical polytope and a facet $F$ in  $\St$, we can define an injection from the vertices in $F$,  except  the vertex opposite to $s$, to the antistar of $F$ in $\St$. Defining this injection is the purpose of \cref{lem:projections-star}.

 \begin{lemma}\label{lem:projections-star} Let $F$ be a facet in the star $\St$  of a vertex $s$ in a cubical $d$-polytope. Then there is an injective function, defined on the vertices of $F$ except the vertex $s^{o}$ opposite to $s$, that maps each such vertex in $F$ to a neighbour in $V(\St)\setminus V(F)$.  
 \end{lemma}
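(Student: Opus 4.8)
The plan is to construct the injection explicitly using the projection map $\pi$ from a suitable pair of opposite facets of $F$, and to handle the small set of vertices where this naive projection would land inside $F$ by a separate combinatorial argument. Fix the star $\St$ of $s$ and the facet $F\in\St$, and let $s^{o}$ be the vertex of $F$ opposite to $s$. Choose a facet $R'$ of $F$ containing $s$ (so $R'$ is a ridge of $P$ lying in $F$), and let $F'$ be the other facet of $P$ containing $R'$; note $F'\in\St$ since $s\in R'\subset F'$. Inside $F$, consider the pair of opposite facets $\{R',R''\}$ where $R''$ is the ridge of $F$ disjoint from $R'$; then $s^{o}\in R''$. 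For each vertex $x\in V(F)$, the projection $\pi^{F}_{R''}(x)$ sends $x\in V(R')$ to its unique neighbour in $R''$ and fixes $x\in V(R'')$. This is not yet a map into $V(\St)\setminus V(F)$, so I compose it with a ``lift'' step: the natural correspondence between $R'$ and $R''$ realized through the facet $F'$. Concretely, since $R'=F\cap F'$, each vertex of $R'$ has a neighbour outside $F$ lying in $F'$, and I use that neighbour instead; so the map I really want is $x\mapsto$ the unique neighbour of $\pi^{F}_{R''}(x)$ that lies in $F'\setminus R'$ (equivalently, in $V(\St)\setminus V(F)$).

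First I would make precise the claim that for every $x\in V(F)$ with $x\ne s^{o}$, the point $\pi^{F}_{R''}(x)$ is a vertex of $R''$ different from $s^{o}$, and that this point, viewed as a vertex of the ridge $R'$-side of $F'$... — actually, cleaner: work entirely inside the $d$-cube $F$ together with the facet $F'$ of $P$. Define $g:V(F)\setminus\{s^{o}\}\to V(F')\setminus V(R')$ by first projecting $x$ to $R''\subset F$ (getting $\pi^{F}_{R''}(x)$), then taking the opposite vertex of that point inside $F$... no. Let me instead describe the map I expect to use: send $x$ to the unique vertex adjacent to $x$ in $V(\St)\setminus V(F)$ when such a vertex exists and is unique, and otherwise route through $F'$. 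Because $F$ is a facet of the cubical polytope $P$, every vertex of $F$ lies in some other facet of $P$, but the only vertex of $F$ all of whose $P$-neighbours lie in $F$ is not generally excluded — this is exactly why only $s^{o}$ is omitted: I would show that $s$ (hence a neighbourhood of $s$ in $\St$) forces every vertex of $F$ except $s^{o}$ to have a neighbour in $\St$ outside $F$, using Remark~\ref{rmk:opposite-vertex-1} (the smallest face of $P$ containing $v$ and $v^{o}$ is $F$) to pin down which vertices are ``trapped''.

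The key steps, in order, are: (1) identify the ridge $R'$ of $F$ through $s$, its opposite ridge $R''\ni s^{o}$, and the facet $F'\in\St$ with $F\cap F'=R'$; (2) show that $\pi^{F}_{R''}$ restricted to $V(R')$ is a bijection onto $V(R'')$, and that every vertex of $V(F)\setminus V(R'')$ lies in $V(R')$ so its projection is defined, with $s^{o}$ the only vertex of $R''$ that is its own preimage under a vertex of $R''$ while also being excluded; (3) compose with the bijection $V(R')\to V(F'\setminus R')$ coming from the cube structure of $F'$ (each $v\in R'$ has a unique neighbour in $F'$ outside $R'$), to land in $V(\St)\setminus V(F)$; (4) verify injectivity by tracking the two sources of vertices separately — vertices of $R''\setminus\{s^{o}\}$ get mapped one way and vertices of $R'$ another, and the two images are disjoint because one lands in $V(F')\setminus V(F)$ via the lift while the other... — here I would need to be careful and possibly send the $R''$-vertices into yet another facet, using that $R''$ is also a ridge of $P$ contained in exactly one other facet. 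The main obstacle I anticipate is precisely step (4): ensuring the combined map is globally injective into $V(\St)\setminus V(F)$ rather than merely injective on each of the two natural halves, and in particular ensuring the chosen ``escape'' facets for $R'$ and for $R''$ are distinct or that their escape-vertex sets are disjoint; I expect this to require invoking that distinct vertices of a cube share at most two common neighbours (Remark~\ref{rmk:cubical-common-neighbours}) together with Remark~\ref{rmk:opposite-vertex-1} to rule out collisions, and possibly a short case split according to whether $s^{o}$'s neighbour structure forces an overlap.
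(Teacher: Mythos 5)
Your plan has a genuine gap, and it occurs exactly where you flagged trouble. The single-ridge construction cannot work as stated. First, the composite map you propose, $x\mapsto$ ``the unique neighbour of $\pi^{F}_{R''}(x)$ lying in $F'\setminus R'$'', is ill-defined: $\pi^{F}_{R''}(x)$ is a vertex of $R''$, and $R''$ is disjoint from $R'=F\cap F'$, so a vertex of $R''$ is not in $F'$ and in general has no neighbour in $F'$ at all. Second, and more fundamentally, your fallback for the vertices of $R''\setminus\{s^{o}\}$ --- routing them through ``the other facet of $P$ containing $R''$'' --- leaves the star: that facet contains $R''$, which does not contain $s$, so that facet need not contain $s$ and its new vertices need not lie in $V(\St)$ at all. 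The lemma requires the image to lie in $V(\St)\setminus V(F)$, so half of $V(F)$ (all of $R''\setminus\{s^{o}\}$, which is $2^{d-2}-1$ vertices) is left with no admissible escape route under your scheme. No choice of a single ridge $R'$ through $s$ and single auxiliary facet $F'$ can cover these vertices.

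The paper's proof avoids this by using \emph{all} $d-1$ ridges $R_{1},\ldots,R_{d-1}$ of $F$ that contain $s$: these cover every vertex of $F$ except $s^{o}$, and each $R_{i}$, being a ridge of $P$ through $s$, lies in a second facet $J_{i}$ of $P$ that automatically belongs to $\St$. A vertex $v$ is assigned to the first ridge $R_{j}$ (in a fixed order) containing it and sent to its projection onto the ridge $R_{j}^{o}$ of $J_{j}$ opposite to $R_{j}$, which is a neighbour of $v$ outside $F$ but inside $\St$. Injectivity then follows from a layering argument: two vertices first appearing in the same $R_{j}$ have distinct projections because $\pi_{R_{j}^{o}}$ is a bijection on $R_{j}$, and two vertices first appearing in different ridges $R_{i}$, $R_{j}$ with $i<j$ have images separated by the observation that the image of the second avoids $V(R_{1}^{o})\cup\cdots\cup V(R_{j-1}^{o})$ while the image of the first lies in $V(R_{i}^{o})$. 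If you want to repair your write-up, replace the single pair $\{R',R''\}$ by this full family of ridges through $s$ and supply that layered injectivity argument; the appeal to \cref{rmk:cubical-common-neighbours} you anticipated is not needed.
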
  
\begin{proof} We construct the aforementioned injection $f$ between $V(F)\setminus \{s^{o}\}$ and $V(\St)\setminus V(F)$ as follows. Let $R_{1},\ldots, R_{d-1}$  be the $(d-2)$-faces of $F$ containing $s$, and let $J_{1},\ldots,J_{d-1}$ be the other facets of $\St$ containing $R_{1},\ldots, R_{d-1}$, respectively. Every vertex in $F$ other than $s^{o}$ lies in $R_{1}\cup\cdots\cup R_{d-1}$. Let $R_{i}^{o}$ be the $(d-2)$-face in $J_{i}$ that is opposite to $R_{i}$ for each $i\in[1,d-1]$. For every vertex $v$ in $V(R_{j})\setminus (V(R_{1})\cup \cdots \cup V(R_{j-1}))$ define  $f(v)$ as the projection $\pi$ in $J_{j}$ of $v$ onto $V(R_{j}^{o})$, namely $f(v):=\pi_{R^{o}_j}(v)$; observe that  $\pi_{R^{o}_j}(v)\in V(R_{j}^{o})\setminus (V(R_{1}^{o})\cup \cdots \cup V(R_{j-1}^{o}))$. Here $R_{-1}$ and $R_{-1}^{o}$ are empty sets. The function $f$ is well defined as $R_{i}$ and $R_{i}^{o}$ are opposite  $(d-2)$-cubes in the $(d-1)$-cube $J_{i}$.

To see that $f$ is an injection, take distinct vertices $v_{1},v_{2}\in V(F)\setminus \{s^{o}\}$, where $v_{1}\in V(R_{i})\setminus (V(R_{1})\cup \cdots \cup V(R_{i-1}))$ and $v_{2}\in V(R_{j})\setminus (V(R_{1})\cup \cdots \cup V(R_{j-1}))$ for $i\le j$. If $i=j$ then $f(v_{1})=\pi_{R^{o}_{i}}(v_{1})\ne \pi_{R^{o}_{i}}(v_{2})=f(v_{2})$. If instead $i< j$ then $f(v_{1})\in V(R_{i}^{o})\subseteq  V(R_{1}^{o})\cup \cdots \cup V(R_{j-1}^{o})$, while $f(v_{2})\not \in V(R_{1}^{o})\cup \cdots \cup V(R_{j-1}^{o})$. \end{proof}

\begin{proof}[Proof of \cref{lem:star-cubical} for $d\geq 7$]

The proof of the case $d=5$ follows a similar pattern to this one, but includes additional technical considerations due to the fact that the $3$-cube is not $2$-linked. These technical considerations will be presented in a separate proof in Appendix~\ref{app:lemmad5}.
In this proof, we identify the arguments that fail for $d=5$ with a dagger sign \dmark. This will make it easier for the reader to follow the proof for $d=5$ in the appendix.

Let $d\ge 7$ be odd and let $k:=(d+1)/2$.  Let $s_{1}$ be a vertex in a cubical $d$-polytope $P$ such that $s_{1}$ is not in Configuration $d$F, and let $\St_{1}$ denote the star of $s_{1}$ in $\B(P)$. Let $X$ be any set of $2k$ vertices in the graph $G(\St_{1})$ of  $\St_{1}$. The vertices in $X$ are our terminals. Also let $Y:=\{\{s_{1},t_{1}\},\ldots,\{s_{k},t_{k}\}\}$ be a labelling and pairing of the vertices of $X$. We aim to find a $Y$-linkage $\{L_{1},\ldots,L_{k}\}$ in $G$ where $L_{i}$ joins the pair $\{s_{i},t_{i}\}$ for $i=1,\ldots,k$. Recall that a path is $X$-valid if it contains no inner vertex from $X$.

We consider a facet $F_{1}$ of $\St_1$ containing $t_{1}$ and having the largest possible number of terminals.
We decompose the proof into four cases based on the number of terminals in $F_{1}$, proceeding from the more manageable case to the more involved one. 
\begin{enumerate}

\item[\cref{case:new-linkedness-thm-1}.] $|X\cap V(F_{1})|=d$.
\item[\cref{case:new-linkedness-thm-2}.] $3\le |X\cap V(F_{1})|\le d-1$.
\item[\cref{case:new-linkedness-thm-3}.] $|X\cap V(F_{1})|=2$ .
\item[\cref{case:new-linkedness-thm-4}.] $|X\cap V(F_{1})|=d+1$.  
 \end{enumerate}  

The proof of \cref{lem:star-cubical} is long, so we outline the main ideas. We let $\A_1$ be the antistar of $F_{1}$ in $\St_1$ and let $\Lk_1$ be the link of $s_{1}$ in $F_{1}$. Using the $(k-1)$-linkedness of $F_{1}$ (\cref{thm:cube-linkedness}), we link as many pairs of terminals in $F_{1}$ as possible through  disjoint $X$-valid paths $L_{i}:=s_{i}-t_{i}$. For those terminals that cannot be linked in $F_{1}$, if possible we use  the injection from $V(F_{1})$ to $V(\A_{1})$ granted by \cref{lem:projections-star} to find a set $N_{\A_{1}}$ of pairwise distinct neighbours  in $V(\A_{1})\setminus X$ of those terminals. Then, using the $(k-2)$-linkedness of $\A_{1}$ (\cref{prop:star-minus-facet-linkedness}), we link the corresponding pairs of terminals in $\A_{1}$ and vertices in $N_{\A_{1}}$  accordingly\dmark.  This general scheme does not always work, as the vertex $s_{1}^{o}$ opposite to $s_{1}$ in $F_{1}$ does not have an image in $\A_{1}$ under the aforementioned injection or the image of a vertex in $F_{1}$ under the injection may be a terminal. In those scenarios we resort to ad hoc methods, including linking corresponding pairs in the link of $s_{1}$ in $F_{1}$, which is $(k-1)$-linked by \cref{prop:link-cubical}\dmark and does not contain $s_{1}$ or $s_{1}^{o}$, or linking corresponding pairs in $(d-2)$-faces disjoint from $F_{1}$, which are $(k-1)$-linked by \cref{thm:cube-linkedness}\dmark.
 
To aid the reader, each case is broken down into subcases highlighted in bold.  

Recall that, given a pair $\{F,F^{o}\}$ of opposite facets in a cube $Q$,  for every vertex $z\in V (F)$ we denote by $z^{p}_{F^o}$ or  $\pi_{F^o}^{Q}(z)$ the unique neighbour of $z$ in $F^{o}$.

\begin{case}\label{case:new-linkedness-thm-1}  $|X\cap V(F_{1})|= d$.\end{case}
  
Without loss of generality,  assume that $  t_{2}\not \in V(F_{1})$. 

{\bf Suppose first that $\dist_{F_{1}} (  s_{2},s_{1})<d-1$}.  There exists a neighbour $  s_{2}'$ of $  s_{2}$ in $\A_1$. With the use of the strong $(k-1)$-linkedness of $F_{1}$ (\cref{thm:cube-strong-linkedness}), find disjoint paths $  L_{1}:=s_{1}-  t_{1}$ and $  L_{i}:=  s_{i}-  t_{i}$ (for each $i\in [3,k]$) in $F_{1}$, each avoiding $  s_{2}$. Find a path $  L_{2}$ in $\St_{1}$ between $  s_{2}$ and $  t_{2}$ that consists of the edge $  s_{2}  s_{2}'$ and a subpath in $\A_1$ between $  s_{2}'$ and $  t_{2}$, using the connectivity of $\A_1$ (see~\cref{prop:star-minus-facet}). The paths $  L_{i}$ ($i\in [1,k]$) give the desired $Y$-linkage. 

{\bf Now assume $\dist_{F_{1}}(  s_{2},s_{1})=d-1$}. Since $2k-1=d$ and there are $d-1$ pairs of opposite $(d-2)$-faces in $F_{1}$, by  \cref{lem:facets-association} there exists a pair $\{R,R^o\}$ of opposite $(d-2)$-faces in $F_{1}$ that is not associated with the set $  X_{s_{2}}:=(  X\cap V(F_{1}))\setminus \{  s_{2}\}$, whose cardinality is $d-1$. Assume $  s_{2}\in R$. Then $s_{1}\in R^{o}$.

Suppose all the neighbours of $  s_{2}$ in $R$ are in $   X$; that is, $N_{R}(  s_{2})=  X\setminus\{s_{1},  s_{2},  t_{2}\}$. The projection $\pi_{R^{o}}^{F_{1}}(  s_{2})$ of $  s_{2}$ onto $R^{o}$ is not in $  X$ since $s_{1}$ is the only terminal in $R^{o}$ and $\dist_{F_{1}}(  s_{2},s_{1})=d-1\ge 2$. Next find disjoint paths $  L_{i}:=  s_{i}-  t_{i}$ for all $i\in [3,k]$ in $R$ that do not touch $  s_{2}$ or $  t_{1}$, using the $(k-1)$-linkedness of $R$ (\emph{the argument also applies for $d=5$ due to the 3-connectivity of $R$ in this case}). With the help of \cref{lem:projections-star}, find a  neighbour   $s_{2}'$ of $\pi_{R^{o}}^{F_{1}}(  s_{2})$ in $\A_1$, and with the connectivity of $\A_1$, a path $  L_{2}$ between $  s_{2}$ and $  t_{2}$  that consists of the length-two path $  s_{2}\pi_{R^{o}}^{F_{1}}(  s_{2})s_{2}'$ and a subpath in $\A_1$ between  $s_{2}'$ and $  t_{2}$. Finally, find a path $  L_{1}$ in $F_{1}$ between $s_{1}$ and $  t_{1}$ that consists of the edge $  t_{1}\pi_{R^{o}}^{F_{1}}(  t_{1})$ and  a subpath in $R^{o}$ disjoint from $\pi_{R^{o}}^{F_{1}}(  s_{2})$  (here use the 2-connectivity of $R^{o}$).  The paths $  L_{i}$ ($i\in [1,k]$)  give the desired $Y$-linkage.
 
Thus assume there exists a neighbour $\bar  s_{2}$ of $  s_{2}$ in $V(R)\setminus   X$.  Let $ X_{R^{o}}:=\pi_{R^{o}}^{F_{1}}( X\setminus \{ s_{2}, t_{2}\})$. Find a path $L_2'$ in $\A_1$ between a neighbour $s'_2$ of $\bar{s}_2$ in $\A_1$ and $t_2$ using the connectivity of $\A_1$. Then let $L_2:=s_2\bar{s}_2s'_2L'_2t_2$.
Find disjoint paths $ L_{i}:=\pi_{R^{o}}^{F_{1}}( s_{i})-\pi_{R^{o}}^{F_{1}}( t_{i})$ ($i\in [1,k]$ and $i\ne 2$) in $R^{o}$ linking the $d-1$ vertices in $ X_{R^{o}}$ using the $(k-1)$-linkedness of $R^{o}$\dmark; add the  edge $\pi_{R^{o}}^{F_{1}}( t_{i}) t_{i}$ to $ L_{i}$ if $ t_{i}\in R$ or the  edge $\pi_{R^{o}}^{F_{1}}( s_{i}) s_{i}$ to $ L_{i}$ if $ s_{i}\in R$. The disjoint paths $ L_{i}$ ($i\in [1,k]$) gives the desired $Y$-linkage.

\begin{case}\label{case:new-linkedness-thm-2}  $3\le | X\cap V(F_{1})|\le d-1$.\end{case}
 The number of terminals in $\A_1$ is at most $d+1-3=d-2$.
 Since $2k-1=d$ and there are $d-1$ pairs of opposite $(d-2)$-faces in $F_{1}$, by \cref{lem:facets-association} there exists a pair $\{R,R^o\}$ of opposite $(d-2)$-faces in $F_{1}$ that is not associated with $ X\cap V(F_{1})$. Assume $s_{1}\in R$. We consider two subcases according to whether $ t_{1}\in R$ or $ t_{1}\in R^{o}$.

{\bf Suppose first that $ t_{1}\in R$}.  The $(d-2)$-connectivity of $R$ ensures the existence of an $X$-valid path $ L_{1}:=s_{1}- t_{1}$ in $R$. Let 
\[ X_{R^o}:=\pi_{R^{o}}^{F_{1}}(( X\setminus \{s_{1}, t_{1}\})\cap V(F_{1})).\] Then  $1\le | X_{R^o}|\le d-3$. Let $s_{1}^{o}$ be the vertex opposite to $s_{1}$ in $F_{1}$; the vertex $s_{1}^{o}$ has no neighbour in $\A_{1}$. 

Let $\bar Z$ be a set of $|V(\A_{1})\cap  X|$ distinct vertices in $V(R^o)\setminus ( X_{R^o}\cup \{s_{1}^{o}\})$. To see that $|\bar Z|\le |V(R^o)\setminus ( X_{R^o}\cup \{s_{1}^{o}\})|$, observe that, for $d\ge 5$ and   $| X_{R^o}|\le d-3$, we get \[|V(R^o)\setminus ( X_{R^o}\cup \{s_{1}^{o}\})|\ge 2^{d-2}-(d-3)-1\ge d-2\ge |V(\A_{1})\cap  X|= |\bar Z|.\] Use \cref{lem:projections-star} to obtain a set $Z$ in $\A_1$ of $|\bar Z|$ distinct vertices  adjacent to vertices in $\bar Z$. Then $|Z|=|V(\A_{1})\cap  X|\le d-2$.

Using the $(d-2)$-connectivity of $\A_1$ (\cref{prop:star-minus-facet}) and Menger's theorem, find disjoint paths $\bar S_{i}$ and $\bar T_{j}$ (for all $i,j\ne1$) in $\A_{1}$ between $V(\A_{1})\cap  X$ and $Z$. Then produce disjoint paths $S_{i}$ and $T_{j}$ (for all $i,j\ne1$) from terminals $s_{i}$ and $t_{j}$ in $\A_1$, respectively,  to $R^o$ by adding edges $z_{\ell}\bar z_{\ell}$ with $z_{\ell}\in Z$ and $\bar z_{\ell}\in \bar Z$ to the corresponding paths $\bar S_{i}$ and $\bar T_{j}$. If $ s_{i}$ or $ t_{j}$ is already in $R^o$, let $S_{i}:= s_{i}$ or $ T_{j}:= t_{j}$, accordingly. If instead $ s_{i}$ or $ t_{j}$ is in $R$, let $ S_{i}$ be the edge $ s_{i}\pi_{R^o}^{F_{1}}( s_{i})$ or let $ T_{j}$ be the edge $ t_{j}\pi_{R^o}^{F_{1}}( t_{j})$.  It follows that the paths $S_{i}$ and $ T_{i}$ for $i\in [2,k]$ are all pairwise disjoint. Let $ X_{R^o}^{+}$ be the intersections of $R^o$ and the paths $ S_{i}$ and $ T_{j}$ ($i,j\ne 1$). Then $| X_{R^o}^{+}|=d-1$.  Suppose that $X^{+}_{R^{o}}=\left\{\bar s_{2},\bar t_{2},\ldots, \bar s_{k},\bar t_{k}\right\}$. The corresponding pairing  $ Y_{R^o}^{+}$ of the vertices in $X_{R^o}^{+}$ can be linked through paths $\bar L_{i}:=\bar s_{i}-\bar t_{i}$ (for all $i\in [2,k]$) in $R^{o}$ using the $(k-1)$-linkedness of $R^o$ (\cref{thm:cube-linkedness})\dmark. See \cref{fig:Aux-Linked-Thm-NewCase2}(a) for a depiction of this configuration. In this case, the desired $Y$-linkage is given by the following paths.
 
 \[L_{i}:=\begin{cases}s_{1} L_{1} t_{1},& \text{for $i=1$;}\\s_{i}S_{i}\bar s_{i}\bar L_{i}\bar t_{i} T_{i} t_{i},& \text{otherwise.}
 \end{cases}\]   
    
\begin{figure}
\includegraphics{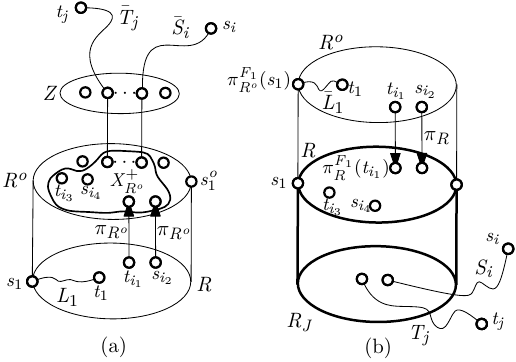}
\caption{Auxiliary figure for \cref{case:new-linkedness-thm-2} of \cref{lem:star-cubical}. (a) A configuration where $ t_{1}\in R$  and the subset $ X^{+}_{R^{o}}$ of $R^{o}$ is highlighted in bold. (b) A configuration where $ t_{1}\in R^{o}$  and the facet $J$ is highlighted in bold. }\label{fig:Aux-Linked-Thm-NewCase2} 
\end{figure}

{\bf Suppose now that $ t_{1}\in R^o$}.  Let \[ X_{R}:=\pi_{R}^{F_{1}}(( X\setminus\{ t_{1}\})\cap V(F_{1})).\] There are at most $d-2$ terminal vertices in $R^o$. Therefore, the $(d-2)$-connectivity of $R^o$ ensures the existence of an $ X$-valid $\pi_{R^{o}}^{F_{1}}(s_{1})- t_{1}$ path $\bar L_{1}$ in $R^o$. Then let $ L_{1}:=s_{1}\pi_{R^{o}}^{F_{1}}(s_{1})\bar L_{1} t_{1}$.  Let  $J$ be the other facet in $\St_1$ containing $R$ and let $R_{J}$ be the $(d-2)$-face of $J$ disjoint from $R$. Then $R_{J}\subset \A_{1}$. Since there are at most $d-2$ terminals in $\A_1$ and since $\A_1$ is $(d-2)$-connected (\cref{prop:star-minus-facet}), we can find corresponding disjoint paths $ S_{i}$ and $ T_{j}$ from the terminals in $\A_{1}$ to $R_{J}$ by Menger's theorem \cite[Theorem 3.3.1]{Die05}.  For terminals $ s_{i}$ and $ t_{j}$ in $ X\cap V(R)$, let $ S_{i}:= s_{i}$ and  $ T_{j}:= t_{j}$ for all $i,j\ne 1$, while for terminals $ s_{i}$ and $ t_{j}$ in $ X\cap V(R^o)$, let $ S_{i}:= s_{i}\pi_{R}^{F_{1}}( s_{i})$ and $ T_{j}:= t_{j}\pi_{R}^{F_{1}}( t_{j})$ for all $i,j\ne 1$. Let $ X_{J}$ be the set of the intersections of the paths $S_{i}$ and $ T_{j}$ with $J$ plus the vertex $s_{1}$. Then $ X_{J}\subset V(J)$ and $| X_{J}|=d$ (since $ t_{1}\ \in R^{o}$). Suppose that $X_{J}=\left\{s_{1},\bar s_{2},\bar t_{2},\ldots, \bar s_{k},\bar t_{k}\right\}$ and let $Y_{J}=\left\{\left\{\bar s_{2},\bar t_{2}\right\},\ldots, \left\{\bar s_{k},\bar t_{k}\right\}\right\}$ be a pairing of $X_{J}\setminus\left\{s_{1}\right\}$. 

Resorting to the strong $(k-1)$-linkedness of the facet $J$ (\cref{thm:cube-strong-linkedness}), we obtain $k-1$ disjoint paths $\bar L_{i}:=\bar s_{i}-\bar t_{i}$ for all $i\ne 1$ that correspondingly link $Y_{J}$ in $J$, with all the paths avoiding $s_{1}$. See \cref{fig:Aux-Linked-Thm-NewCase2}(b) for a depiction of this configuration. In this case, the desired $Y$-linkage is given by the following paths.
\[L_{i}:=\begin{cases}s_{1}L_{1} t_{1},& \text{for $i=1$;}\\s_{i}S_{i} \bar L_{i} T_{i} t_{i},& \text{otherwise.}
 \end{cases}\] 	  
  
\begin{case}\label{case:new-linkedness-thm-3}  $| X\cap V(F_{1})|=2$.\end{case}
  
In this case, we have that $V(F_1)\cap X = \{s_1,t_1\}$ and $|V(\A_1)\cap  X|=d-1$. The proof of this case requires the definition of several sets. For quick reference, we place most of these definitions in itemised lists. We begin with the following sets:

\begin{itemize}
\item $\St_{12}$, the star of $ s_{2}$ in $\St_{1}$ (that is, the complex formed by the facets of $P$ containing $s_{1}$ and $ s_{2}$);
\item $G(\St_{12})$, the graph of $\St_{12}$; and 
\item $\Gamma_{12}$, the subgraph of $G(\St_{12})$ and $G(\A_1)$ that is induced by  $V(\St_{12})\setminus V(F_{1})$.
\end{itemize}
It follows that every neighbour in $G(\A_{1})$ of $s_{2}$ is in $\Gamma_{12}$:
\begin{equation}\label{eq:neighbourhood}
	N_{\Gamma_{12}}(s_{2})=N_{G(\A_{1})}(s_{2}). 
\end{equation}
Note that when $d\geq 5$, $|V(\Gamma_{12})| \geq 2^{d-2}\geq d-2$, since $\St_{12}$ contains at least one facet (other than $F_1$), and that facet contains at least one $(d-2)$-face disjoint from $F_1$. The vertices of that $(d-2)$-face are in $\Gamma_{12}$.

{\bf The first step for this case is to bring the terminals in  $\A_{1}$ into $\Gamma_{12}$.} The $(d-2)$-connectivity of the graph $G(\A_1)$ (\Cref{prop:star-minus-facet}) ensures the existence of pairwise disjoint paths from $(V(\A_1)\cap X)\setminus \{s_2\}$ to $\Gamma_{12}$. Among these paths, denote by $S_{i}$ the path from the terminal $s_{i}\in \A_1$ to  $\Gamma_{12}$ and let $V(S_{i})\cap V(\Gamma_{12})=\left\{\hat s_{i}\right\}$. Similarly, define $ T_{j}$ and $\hat t_{j}$. By \eqref{eq:neighbourhood} each path $ S_{i}$ or $ T_{j}$ touches $\Gamma_{12}$ at a vertex other than $ s_{2}$; this is so because each such path will need to reach the neighbourhood of $s_{2}$ in $\Gamma_{12}$ before reaching $s_{2}$. We also let $\hat s_{2}$ denote $ s_{2}$. The set of vertices $\hat x$ is accordingly denoted by $\hat X$. Then $|\hat X|=d-1$. Abusing terminology, since there is no potential for confusion, we  call the vertices in $\hat X$ terminals as well. \Cref{fig:Aux-Linked-Thm-Case4}(a) depicts this configuration. 
 
Pick a facet  $F_{12}$ in $\St_{12}$ that contains $\hat t_{2}$.
An important point is that $ t_{1}$ is not in $F_{12}$; otherwise $F_{12}$ would contain $s_{1}$,$s_{2}$ and  $t_{1}$, and it should have been chosen instead of $F_{1}$.   
 
{\bf The second step is to find a path $L_{1}$ in $F_{1}$ between $s_{1}$ and $ t_{1}$ such that $V(L_{1})\cap V(F_{12})=\left\{s_{1}\right\}$.}

\begin{remark}
\label{rmk:two-vertices}
 For any two faces $F,J$ of a polytope, with $F$ not contained in $J$, there is a facet containing $J$ but not $F$. In particular, for any two distinct vertices of a polytope, there is a facet containing one but not the other.
\end{remark}
 
 To see the existence of such a path, note that the intersection of $F_{12}$ and $F_{1}$ is 
 a face that does not contain $t_{1}$ and therefore is
 contained in a $(d-2)$-face $R$  of $F_{1}$ containing $s_{1}$ but not $t_{1}$ (\cref{rmk:two-vertices}). Find a path $ L_{1}'$ in $R^{o}$, the $(d-2)$-face in $F_{1}$ disjoint from $R$ ($R^{o}$ contains $t_{1}$), between $\pi_{R^{o}}^{F_{1}}(s_{1})$ and $ t_{1}$ and let $ L_{1}:=s_{1}\pi_{R^{o}}^{F_{1}}(s_{1}) L_{1}' t_{1}$.
 
{\bf The third step is to bring the $d-1$ terminal vertices $\hat x\in \Gamma_{12}$ into the facet $F_{12}$ so that they can be linked there, avoiding $s_{1}$.}  We consider two cases depending on the number of facets in $\St_{12}$.
  
\begin{figure} 
\includegraphics[scale=.9]{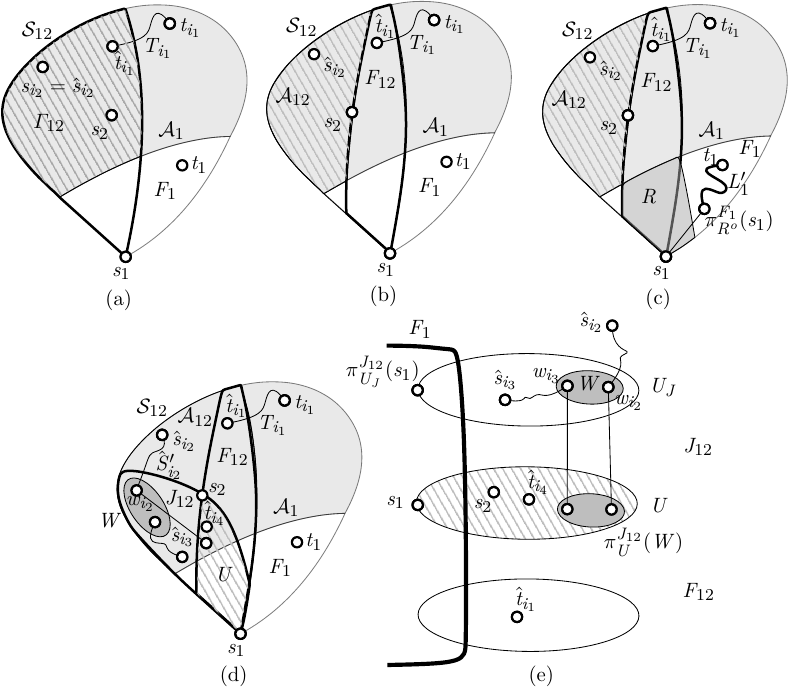}
\caption{Auxiliary figure for \cref{case:new-linkedness-thm-3} of \cref{lem:star-cubical}. A representation of $\St_{1}$. (a) A configuration where the subgraph $\Gamma_{12}$ is tiled in  falling pattern and the complex $\A_{1}$ is coloured in grey. (b) A depiction of $\St_{12}$ with more than one facet; the facet $F_{12}$ is highlighted in bold, the complex $\A_{1}$ is coloured in grey and the complex $\A_{12}$ is highlighted in falling pattern. (c) The construction of the path $L_1:=s_1\pi_{R^o}^{F^1}(s_1)L_1't_1$ from $s_1$ to $t_1$ in $F_1$ such that $L_1\cap V(\Gamma_{12}) = \{s_1\}$. (d)  A depiction of $\St_{12}$ with more than one facet; the facets  $F_{12}$ and $J_{12}$ are highlighted in bold and their intersection $U$ is highlighted in falling pattern; the set $W$ in $J_{12}$ is coloured in dark grey. (e) A depiction of a portion of $\St_{12}$, zooming in on the facets  $F_{12}$ and $J_{12}$; each facet is represented as the convex hull of two disjoint $(d-2)$-faces, and their intersection $U$  is highlighted in falling pattern. The sets $W$ and $\pi_{U}^{J_{12}}(W)$ in $J_{12}$ are coloured in dark grey.}\label{fig:Aux-Linked-Thm-Case4} 
\end{figure}
   
{\bf Suppose $\St_{12}$ only consists of $F_{12}$}. Then \[\hat X=\{\hat s_{2},\ldots,\hat s_{k},\hat t_{2},\ldots,\hat t_{k}\}\subset V(\Gamma_{12})\subset V(F_{12}).\] With the help of the strong $(k-1)$-linkedness of $F_{12}$ (\cref{thm:cube-strong-linkedness}), we can link the pair $\{\hat s_{i},\hat t_{i}\}$ for each $i\in [2,k]$ in $F_{12}$ through disjoint paths $\hat L_{i}$, all avoiding $s_{1}$.  The path $\hat L_{i}$ concatenated with the paths $S_{i}$ and $ T_{i}$  for each $i\in [2,k]$ give a $(Y\setminus \{s_{1}, t_{1}\})$-linkage $\{L_{2},\ldots,L_{k}\}$. Hence the desired $Y$-linkage is as follows.  
 
\[L_{i}:=\begin{cases}s_{1}\pi_{R^{o}}^{F_{1}}(s_{1}) L_{1}' t_{1},& \text{for $i=1$;}\\s_{i}S_{i}\hat s_{i}\hat L_{i}\hat t_{i} T_{i}t_{i},& \text{otherwise.}
 \end{cases}\]
  
{\bf Assume $\St_{12}$  has more than one facet}. We have that \[\hat X=\{\hat s_{2},\ldots,\hat s_{k},\hat t_{2},\ldots,\hat t_{k}\}\subset V(\Gamma_{12}).\] 
Define
\begin{itemize}
\item $\A_{12}$ as the complex of $\St_{12}$ induced by $V(\St_{12})\setminus (V(F_{1})\cup V(F_{12}))$. 
\end{itemize}
Then the graph $G(\A_{12})$ of $\A_{12}$  coincides with the subgraph of $\Gamma_{12}$ induced by $V(\Gamma_{12})\setminus V(F_{12})$. \Cref{fig:Aux-Linked-Thm-Case4}(b) depicts this configuration.

Our strategy is first to bring the $d-3$ terminal vertices $\hat x$ in $\Gamma_{12}$ other than $\hat s_{2}$ and $\hat t_{2}$ into $F_{12}\setminus F_{1}$ through disjoint paths $\hat S_{i}$ and $\hat T_{j}$, without touching $\hat s_{2}$ and $\hat t_{2}$. Second, denoting by $\tilde s_{i}$ and $\tilde t_{j}$ the intersection of $\hat S_{i}$ and $\hat T_{j}$ with $V(F_{12})\setminus V(F_{1})$, respectively, we link the pairs $\{\tilde s_{i},\tilde t_{i}\}$ for all $i\in[2,k]$ in $F_{12}$ through disjoint paths $\tilde L_{i}$, without touching $s_{1}$; here we resort to the strong $(k-1)$-linkedness of $F_{12}$. We develop these ideas below. 

From  \cref{lem:technical}(iii), it follows that $\A_{12}$ is nonempty and contains a spanning strongly connected $(d-3)$-subcomplex, thereby implying, by \cref{prop:connected-complex-connectivity}, that \[\text{$G(\A_{12})$ is $(d-3)$-connected.}\]
 Since $\St_{12}$ contains more than one facet, the following sets exist:
 \begin{itemize}
 \item $U$, a $(d-2)$-face  in $F_{12}$ that contains $s_{1}$ and $\hat s_{2}$ ($= s_{2}$)  (since several facets in $\St_{12}$ contain both $s_1$ and $s_2$);
 \item $J_{12}$, the other facet in $\St_{12}$ containing $U$;
 \item $U_{J}$, the $(d-2)$-face in $J_{12}$ disjoint from $U$, and as a consequence, disjoint from $F_{12}$; 
 \item $\C_{U}$,  the subcomplex of $\B(U)$ induced by $V(U)\setminus V(F_{1})$, namely the antistar of $U\cap F_{1}$ in $U$; and 
 
\item  $\C_{U_{J}}$, the subcomplex of $\B(U_{J})$ induced by $V(U_{J})\setminus  V(F_{1})$. 
 \end{itemize}
	
 The subcomplex $\C_{U}$ is nonempty, since $\hat s_{2}\in V(U)\setminus V(F_{1})$, and so, thanks to \cref{lem:cube-face-complex}, it is a strongly connected $(d-3)$-complex. Then, from $C_{U}$ containing a $(d-3)$-face it follows that
 \begin{equation}\label{eq:Cubical-Linkedness-Case4-U-Cardinality}
 |V(\C_{U})|=|V(U)\setminus V(F_{1}))|\ge 2^{d-3}\ge d-1\; \text{for $d\ge 5$}.
 \end{equation}
 
The subcomplex $\C_{U_{J}}$ is nonempty: 
the vertex in $J_{12}$ opposite to $s_{1}$ is not in $U$, since $s_{1}\in U$, nor is it in $F_{1}$ (\cref{rmk:opposite-vertex-1}), and so it must be in $\C_{U_{J}}$. 
If $U_{J}\cap F_{1}=\emptyset$ then $\C_{U_{J}}=\B(U_{J})$; otherwise $\C_{U_{J}}$ is the antistar of $U_{J}\cap F_{1}$ in $U_{J}$, and since $U\cap F_{1}\ne \emptyset$ ($s_{1}$ is in both), it follows that $U_{J}\not \subseteq F_{1}$. 
Therefore, according to \cref{lem:cube-face-complex}, $\C_{U_{J}}$ is or contains a strongly connected $(d-3)$-complex.   Hence, in both instances,
 \begin{equation}\label{eq:Cubical-Linkedness-Case4-U_J-Cardinality}
  |V(\C_{U_{J}})|=|V(U_{J})\setminus V(F_{1}))|\ge 2^{d-3}\ge d-1 \; \text{for $d\ge 5$}.
 \end{equation}
 
Recall that we want to bring every vertex in the set $\hat X$, which is contained in $\Gamma_{12}$, into $F_{12}\setminus F_{1}$. We construct $|\hat X\cap V(\A_{12})|$ pairwise disjoint paths $\hat S_{i}$ and $\hat T_{j}$ from $\hat s_{i}\in \A_{12}$ and $\hat t_{j}\in \A_{12}$, respectively,  to $V(F_{12})\setminus V(F_{1})$ as follows.  Pick a set \[W\subset V(\C_{U_{J}})\setminus \pi_{U_{J}}^{J_{12}}\left((\hat X\cup\{s_{1}\})\cap U\right)\]
of $|\hat X\cap V(\A_{12})|$ vertices in $\C_{U_{J}}$. Then $\pi_{U}^{{J^{12}}}(W)$ is disjoint from $(\hat X\cup\{s_{1}\})\cap U$. In other words, the vertices in $W$ are in $\C_{U_{J}}$ and are not projections  of the vertices in $(\hat X\cup\{s_{1}\}) \cap U$ onto $U_{J}$.  We show that the set $W$ exists, which amounts to showing that $\C_{U_{J}}$ has enough vertices to accommodate $W$.

First note that 
\begin{equation}
\begin{gathered}\label{eq:Cardinality-hat-X-A}
|\hat X\cap V(\A_{12})|+|(\hat X\cup\{s_{1}\})\cap V(F_{12})|=|\hat X\cup \{s_{1}\}|=d,\\ 
(\hat X\cup\{s_{1}\})\cap V(U)\subseteq (\hat X\cup\{s_{1}\})\cap V(F_{12}).
\end{gathered}  
\end{equation} 

If $U_{J}\cap F_{1}=\emptyset$ then $\C_{U_{J}}=\B(U_{J})$. And \eqref{eq:Cardinality-hat-X-A} together with $|V(U_{J})|=2^{d-2}\ge d$ for $d\ge 7$ (indeed, for $d\ge 5$) gives the following chain of inequalities
\begin{multline*}
\left|V(C_{U_{J}})\setminus \pi_{U_{J}}^{J_{12}}\left((\hat X\cup\{s_{1}\})\cap V(U)\right)\right|\ge \left| V(U_J)\right|-\left|(\hat X\cup \{s_{1}\})\cap V(U)\right|\\
\ge d-\left|(\hat X\cup \{s_{1}\})\cap V(U)\right|
\ge\left|\hat X\cup\{s_{1}\}\right|-\left|(\hat X\cup \{s_{1}\})\cap V(F_{12})\right|\\
=\left|\hat X\cap V(\A_{12})\right|=\left|W\right|,
\end{multline*}
as desired. 

Suppose now $U_{J}\cap F_{1}\ne \emptyset$. Since $s_{1}\in U\cap F_{1}$ and $J_{12}=\conv \{U\cup U_{J}\}$, the cube $J_{12}\cap F_{1}$ has opposite facets $U_{J}\cap F_{1}$ and $U\cap F_{1}$. From $s_{1}\in U\cap F_{1}$ it follows that $\pi_{U_{J}}^{J_{12}}(s_{1})\in U_{J}\cap F_{1}$, and thus, that $\pi_{U_{J}}^{J_{12}}(s_{1})\not \in \C_{U_{J}}$; here we use the following remark.
\begin{remark}
Let $(K,K^{o})$ be opposite facets in a cube $Q$ and let $B$ be a proper face of $Q$ such that $B\cap K\ne \emptyset$ and $B\cap K^{o}\ne \emptyset$. Then $\pi^{Q}_{K^{o}}(B\cap K)=B\cap K^{o}$. 
\end{remark}
\noindent Since $\pi_{U_{J}}^{J_{12}}(s_{1})\not \in \C_{U_{J}}$, using \eqref{eq:Cubical-Linkedness-Case4-U_J-Cardinality} and \eqref{eq:Cardinality-hat-X-A} we get  	   
\begin{multline*} 
\left|V(C_{U_{J}})\setminus \pi_{U_{J}}^{J_{12}}\left((\hat X\cup\{s_{1}\})\cap V(U)\right)\right|=\left|V(C_{U_{J}})\setminus \pi_{U_{J}}^{J_{12}}\left(\hat X\cap V(U)\right)\right|
\\ \ge \left|V(C_{U_J})\right|-\left|\hat X\cap V(U)\right|
 \ge d-1-\left|\hat X\cap V(U)\right|\\
\ge\left|\hat X\right|-\left|\hat X\cap V(F_{12})\right|
=\left|\hat X\cap V(\A_{12})\right|=\left|W\right|.
\end{multline*}
{\bf In this way, we have shown that $\C_{U_{J}}$ can accommodate the set $W$.} We now finalise the case.
  
There are at most $d-3$ vertices $\hat x$ in $\hat X\cap V(\A_{12})$ because $\hat s_{2}$ and $\hat t_{2}$ are already in $V(F_{12})\setminus V(F_{1})$. Since $G(\A_{12})$ is $(d-3)$-connected, we can find $|W|=|\hat X\cap V(\A_{12})|$ pairwise disjoint paths $\hat S_{i}'$ and $\hat T_{j}'$ in $\A_{12}$ from the terminals $\hat s_{i}$ and $\hat t_{j}$ in $\hat X\cap V(\A_{12})$ to $W$.  The $\hat X$-valid path $\hat S_{i}$ from $\hat s_{i}\in \A_{12}$ to $V(F_{12})\setminus V(F_{1})$ then consists of the subpath $\hat S_{i}':=\hat s_{i}-w_{i}$ with $w_{i}\in W$  plus the edge $w_{i}\pi_{U}^{J_{12}}(w_{i})$; from the choice of $W$ it follows that $\pi_{U}^{J_{12}}(w_{i})\not \in \hat X\cup\{s_{1}\}$. The paths $\hat T_{j}'$ and $\hat T_{j}$ are defined analogously. \Cref{fig:Aux-Linked-Thm-Case4}(d)-(e) depicts this configuration. 

Denote by $\tilde s_{i}$ the intersection of $\hat S_{i}$ and $V(F_{12})\setminus V(F_{1})$; similarly, define  $\tilde t_{j}$. Every terminal vertex $\hat x$ already in $F_{12}$ is also denoted by $\tilde x$, and in this case we let $\hat S_{i}$ or $\hat T_{j}$ be the vertex $\tilde x$.  
 
Now  $F_{12}$ contains the pairs $\left\{\tilde s_{i},\tilde t_{i}\right\}$ for all $i\in [2,k]$ and the terminal $s_{1}$, as desired. Link these pairs in $F_{12}$ through disjoint paths $\tilde L_{i}$, each avoiding $s_{1}$, with the use of the strong $(k-1)$-linkedness of $F_{12}$ (\cref{thm:cube-strong-linkedness}). The paths $\tilde L_{i}$ concatenated with the paths $S_{i}$, $\hat S_{i}$, $T_{i}$ and $\hat T_{i}$ for $i\in [2,k]$ give a $(Y\setminus \{s_{1}, t_{1}\})$-linkage $\{L_{2},\ldots,L_{k}\}$. Hence the desired $Y$-linkage is as follows.   
 
\[L_{i}:=\begin{cases}s_{1}\pi_{R^{o}}^{F_{1}}(s_{1}) L_{1}' t_{1},& \text{for $i=1$;}\\s_{i}S_{i}\hat s_{i}\hat S_{i}\tilde s_{i}\tilde L_{i}\tilde t_{i}\hat T_{i}\hat t_{i} T_{i} t_{i},& \text{otherwise.}
 \end{cases}\]

\begin{case}\label{case:new-linkedness-thm-4} $| X\cap V(F_{1})|= d+1$.\end{case}

Remember that by assumption $s_1$ is not in configuration $d$F. Here we have that $V(\A_{1})\cap  X=\emptyset$. This case is decomposed into three main subcases A, B and C, based on the nature of the vertex $s_{1}^{o}$ opposite to $s_{1}$ in $F_{1}$, which is the only vertex in $F_{1}$ that does not have an image under the injection from $F_{1}$ to $\A_{1}$ defined in \cref{lem:projections-star}. 

\subsection *{\bf \uppercase{Subcase} A. The vertex $s_{1}^{o}$ opposite  to $s_{1}$ in $F_{1}$ does not belong to $ X$.} Let $ X':= X\setminus \{ t_{1}\}$ and let $ Y':= Y\setminus \{\{s_{1}, t_{1}\}\}$. Since $| X'|=d$,  the strong $(k-1)$-linkedness of $F_{1}$ (\cref{thm:cube-strong-linkedness}) gives a $ Y'$-linkage $\{ L_{2},\ldots, L_{k}\}$ in the facet $F_{1}$ with each path $ L_{i}:= s_{i}- t_{i}$ ($i\in [2,k]$) avoiding $s_{1}$.  We find pairwise distinct  neighbours $s_{1}'$ and $ t_{1}'$ in $\A_1$ of $s_{1}$ and $ t_{1}$, respectively. If none of the paths $ L_{i}$ touches $ t_{1}$, we find a path $ L_{1}:=s_{1}- t_{1}$ in $\St_{1}$ that contains a subpath in $\A_1$  between  $s_{1}'$ and $ t_{1}'$ (here use the connectivity of $\A_{1}$, \cref{prop:star-minus-facet}), and we are home. Otherwise,  assume that the path $ L_{j}$ contains $ t_{1}$. With the help of \cref{lem:projections-star}, find pairwise distinct  neighbours $ s_{j}'$ and $ t_{j}'$ in $\A_1$ of $ s_{j}$ and $ t_{j}$, respectively, such that the vertices $s_{1}'$,  $ t_{1}'$, $ s_{j}'$ and $ t_{j}'$ are pairwise distinct. According to \cref{prop:star-minus-facet-linkedness}, the complex $\A_1$ is 2-linked for $d\ge 7$\dmark. Hence, we can find disjoint paths $ L_{1}':=s_{1}'- t_{1}'$ and $ L_{j}':= s_{j}'- t_{j}'$ in $\A_{1}$, respectively; these paths naturally give rise to paths $ L_{1}:=s_{1}s_{1}'L_{1}'t_{1}'t_{1}$ in $\St_1$  and $ L_{j}:= s_{j}s_{j}'L_{j}'t_{j}'t_{j}$ in $\St_1$. The paths $\left\{L_{1},\ldots,L_{k}\right\}$ give the desired $Y$-linkage.

\subsection *{\bf \uppercase{Subcase} B. The vertex $s_{1}^{o}$ opposite  to $s_{1}$ in $F_{1}$ belongs to $ X$ but is different from $ t_{1}$, say $s_{1}^{o}= s_{2}$.}  Since $F_1$ is a cube, the link $\Lk_1$ of $s_{1}$ in $F_{1}$ contains  all the vertices in $F_{1}$ except $s_{1}$ and $ s_{2}$. First find a neighbour $s_{1}'$ of $s_{1}$ and a neighbour $ t_{1}'$ of $ t_{1}$ in $\A_1$. 
 There is a neighbour $ s_{2}^{F_{1}}$ of $ s_{2}$ in $F_{1}$ that is either $ t_{2}$ or a vertex not in $ X$: $\{s_{1}, s_{2}\}\cap N_{F_{1}}( s_{2})=\emptyset$ and $|N_{F_{1}}( s_{2})|=d-1$. 

Suppose $ s_{2}^{F_{1}}= t_{2}$, and let $ L_{2}:= s_{2} t_{2}$. Using the $(k-1)$-linkedness of $\Lk_1$ (\cref{prop:link-cubical}), we find  disjoint paths  $ t_{1}- t_{2}$ and $ L_{i}:= s_{i}- t_{i}$  for each $i\in [3,k]$ in $\Lk_1$\dmark. Then define a path $ L_{1}:=s_{1}- t_{1}$ in $\St_{1}$ that contains a subpath in $\A_1$  between $s_{1}'$ and $ t_{1}'$; here we use the connectivity of $\A_{1}$ (\cref{prop:star-minus-facet}).  The paths $\left\{L_{1},\ldots,L_{k}\right\}$ give the desired $Y$-linkage. 

Assume $ s_{2}^{F_{1}}$ is not in $ X$. Observe that $| (X\setminus \{s_{1}, s_{2}\})\cup \{ s_{2}^{F_{1}}\}|=d$. Using the  $(k-1)$-linkedness of $\Lk_1$ for $d\ge 7$ (\cref{prop:link-cubical}), find in $\Lk_1$ disjoint paths  $ L_{2}':= s_{2}^{F_{1}}- t_{2}$ and $ L_{i}':= s_{i}- t_{i}$  for $i\in [3,k]$\dmark. Since $ t_{1}$ is also in $\Lk_{1}$ it may happen that it lies in one of the  paths $ L_{i}'$. If $ t_{1}$ does not belong to any of the paths $ L_{i}'$  for $i\in [2,k]$, then find a path $ L_{1}:=s_{1} s_{1}'L'_{1} t'_{1}t_{1}$ in $\St_{1}$ where $L_{1}'$ is a subpath in $\A_1$  between $s_{1}'$ and $ t_{1}'$, using the connectivity of $\A_{1}$ (\cref{prop:star-minus-facet}). In this scenario, let $L_{2}:= s_{2} s_{2}^{F_{1}} L_{2}' t_{2}$ and $L_{i}:=L_{i}'$ for each $i\in [3,k]$; the desired $Y$-linkage is given by the paths $\left\{L_{1},\ldots,L_{k}\right\}$.

If $ t_{1}$ belongs to one of the paths $ L_{i}'$ with $i\in [2,k]$, say $ L_{j}'$, then consider in $\A_1$ a neighbour $ t_{j}'$ of $ t_{j}$ and, either a neighbour $ s_{j}'$ of $ s_{j}$ if $j\ne 2$ or a neighbour $ s_{2}'$ of $ s_{2}^{F_{1}}$. From \cref{lem:projections-star} it follows that the vertices $s_{1}'$, $ t_{1}'$,  $ s_{j}'$ and $ t_{j}'$ can be taken pairwise distinct. Since $\A_1$ is 2-linked for $d\ge 7$ (see~\cref{prop:star-minus-facet-linkedness}), find in $\A_1$ a path $ L_{1}'$   between $s_{1}'$ and $ t_{1}'$ and a path $ L_{j}''$ between $ s_{j}'$ and $ t_{j}'$\dmark. As a consequence, we obtain in $\St_{1}$ a path  $ L_{1}:=s_{1}s_{1}' L_{1}' t_{1}' t_{1}$ and, either a path $ L_{j}:= s_{j} s_{j}' L_{j}'' t_{j}' t_{j}$ if $j\ne 2$ or a  path $ L_{2}:= s_{2} s_{2}^{F_{1}} s_{2}' L_{2}'' t_{2}' t_{2}$. In addition, let $L_{i}:=L_{i}'$ for each $i\in [3,k]$ and $i\ne j$. The paths $\left\{L_{1},\ldots,L_{k}\right\}$ give the desired $Y$-linkage.

\subsection *{\bf \uppercase{Subcase} C. The vertex opposite to $s_{1}$ in $F_{1}$ coincides with $ t_{1}$.}  Then $ t_{1}$ has no neighbour in $\A_1$. In fact, $F_{1}$ is the only facet in $\St_{1}$ containing $ t_{1}$. 

Because the vertex $s_{1}$ is not in Configuration $d$F, $ t_{1}$ has a neighbour $ t_{1}^{F_{1}}$ in $F_{1}$ that is not in $ X$. Here we  reason as in the scenario in which $ s_{2}=s_{1}^{o}$ and $ s_{2}$ has a neighbour not in $ X$. 

First, using the $(k-1)$-linkedness of $\Lk_1$ (\cref{prop:link-cubical})  find  disjoint paths $ L_{i}:= s_{i}- t_{i}$  in $\Lk_1$ for all $i\in [2,k]$\dmark. It may happen that $t_{1}^{F_{1}}$ is in one of the paths $L_{i}$ for $i\in [2,k]$. Second, consider neighbours  $s_{1}'$ and $ t_{1}'$ in $\A_1$ of $s_{1}$ and $ t_{1}^{F_{1}}$, respectively. 

 If $ t_{1}^{F_{1}}$ doesn't belong to any path $ L_{i}$, then  find a path $ L_{1}:=s_{1}- t_{1}$ that contains the edge $ t_{1} t_{1}^{F_{1}}$ and a subpath $L_{1}'$ in $\A_1$  between $s_{1}'$ and $ t_{1}'$; that is,  $L_{1}=s_{1}s_{1}'L_{1}'t_{1}'t_{1}^{F_{1}}t_{1}$. The desired $Y$-linkage is given by  $\{ L_{1}, \ldots, L_{k}\}$.

 If $ t_{1}^{F_{1}}$ belongs to one of the paths $ L_{i}$ with $i\in [2,k]$, say $ L_{j}$, then disregard this path $L_{j}$ and consider in $\A_1$ a neighbour $ s_{j}'$ of $ s_{j}$ and a neighbour $ t_{j}'$ of $ t_{j}$. From \cref{lem:projections-star}, it follows that the vertices $s_{1}'$, $ t_{1}'$, $ s_{j}'$ and $ t_{j}'$ can be taken pairwise distinct. Using the 2-linkedness of $\A_{1}$ for $d\ge 7$, find a path $L_{1}'$ in $\A_{1}$ between $s_{1}'$ and $ t_{1}'$ and a path $ L_{j}'$ in $\A_1$ between $ s_{j}'$ and $ t_{j}'$\dmark. Let $L_{1}:=s_{1}s_{1}'L_{1}'t_{1}'t_{1}^{F_{1}}t_{1}$ and let $L_{j}:=s_{j}s_{j}'L_{j}'t_{j}'t_{j}$ be the new $s_{j}-t_{j}$ path.  The paths  $\{ L_{1}, \ldots, L_{k}\}$ form the desired $Y$-linkage.

And finally, the proof  of \cref{lem:star-cubical} is complete.
\end{proof}
	
\section{Strong linkedness of cubical polytopes}

\begin{proof}[Proof of \cref{thm:cubical-strong-linkedness} (Strong linkedness of cubical polytopes)] Let $P$ be a cubical $d$-polytope. For odd $d$ \cref{thm:cubical-strong-linkedness} is a consequence of \cref{thm:cubical}. The result for $d=4$ is given by \cite[Theorem 16]{BuiPinUgo20a}. So assume $d=2k\ge 6$. Let $X$ be a set of $d+1$ vertices in $P$. Arbitrarily pair $2k$ vertices in $X$ to obtain $Y:=\{\{s_{1},t_{1}\},\ldots,\{s_{k},t_{k}\}\}$. Let $x$ be the vertex of $X$ not paired in $Y$. We find a $Y$-linkage $\{L_{1},\ldots, L_{k}\}$ where each path $L_{i}$ joins the pair $\{s_{i},t_{i}\}$ and avoids the vertex $x$.  

Using the $d$-connectivity of $G(P)$ and Menger's theorem, bring the $d=2k$ terminals in $X\setminus \{x\}$ to the link of $x$ in the boundary complex of $P$ through $2k$ disjoint paths $L_{s_{i}}$ and $L_{t_{i}}$ for $i\in [1,k]$. Let $s_{i}':=V(L_{s_{i}})\cap \lk(x)$  and $t_{i}':=V(L_{t_{i}})\cap \lk(x)$ for $i\in [1,k]$. Thanks to \cref{thm:cubical}, when $d\geq 6$, the link of $x$ is $k$-linked. Using the $k$-linkedness of $\lk(x)$, find disjoint paths $L_{i}':=s_{i}'-t_{i}'$ in $\lk(x)$. Observe that all these $k$ paths $\{L_{1}',\ldots,L_{k}'\}$ avoid $x$. Extend each path $L_{i}'$ with $L_{s_{i}}$ and $L_{t_{i}}$ to form a path $L_{i}:=s_{i}-t_{i}$ for each $i\in [1,k]$. The paths $\{L_{1},\ldots,L_{k}\}$ form the desired $Y$-linkage.
\end{proof}	

\section{Conflict of interest}
 On behalf of all authors, the corresponding author states that there is no conflict of interest.  


\providecommand{\bysame}{\leavevmode\hbox to3em{\hrulefill}\thinspace}
\providecommand{\MR}{\relax\ifhmode\unskip\space\fi MR }
\providecommand{\MRhref}[2]{%
  \href{http://www.ams.org/mathscinet-getitem?mr=#1}{#2}
}
\providecommand{\href}[2]{#2}

\appendix
\section{Proof of Lemma~\ref{lem:star-cubical} for the case $d=5$}\label{app:lemmad5}

The proof of the lemma for the case $d=5$ follows a similar structure as the case $d\geq 7$, but requires some technical adjustments. We rely on the following lemmas:

\begin{lemma}[{\cite[Lemma 14]{BuiPinUgo20a}}] Let P be a cubical $d$-polytope with $d\ge 4$. Let $X$ be a set of $d+1$  vertices in $P$, all contained in a facet $F$. Let $k:=\floor{(d+1)/2}$. Arbitrarily label and pair $2k$ vertices in $X$ to obtain $Y:=\{\{s_{1},t_{1}\},\ldots,\{s_{k},t_{k}\}\}$. Then, for at least $k-1$ of these pairs $\{s_{i},t_{i}\}$, there is an  $X$-valid  $s_{i}-t_{i}$ path in $F$.   
\label{lem:short-distance} 
\end{lemma}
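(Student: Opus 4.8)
The plan is to prove the slightly stronger statement that \emph{at most one} of the $k$ pairs in $Y$ fails to be joined by an $X$-valid path inside $F$; as $|Y|=k$, this yields the claimed $k-1$ pairs. Write $G:=G(F)$. Since $F$ is a facet of a cubical $d$-polytope, $F$ is a $(d-1)$-cube, so $G$ is the graph of $Q_{d-1}$: it is $(d-1)$-regular, bipartite (hence triangle-free), and $(d-1)$-connected (Balinski's theorem --- e.g.\ \cref{prop:connected-complex-connectivity} applied to the complex $\C(F)$). For $i\in[1,k]$ put $S_i:=X\setminus\{s_i,t_i\}$, a set of exactly $d-1$ vertices of $G$ with $s_i,t_i\notin S_i$. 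The opening step is the routine observation that an $X$-valid $s_i$--$t_i$ path in $F$ is precisely an $s_i$--$t_i$ path of $G$ avoiding $S_i$ (its inner vertices must miss $X$, hence miss $S_i$, since they already miss the endpoints). Thus a pair $\{s_i,t_i\}$ has \emph{no} $X$-valid path in $F$ --- call it \emph{bad} --- if and only if $S_i$ separates $s_i$ from $t_i$ in $G$; and in that case $s_i,t_i$ are non-adjacent and, since $|S_i|$ equals the connectivity of $G$, the set $S_i$ is a \emph{minimum} separator of $G$.

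The core ingredient is that the cube is \emph{super-connected}: every minimum vertex separator $S$ of $Q_{d-1}$ is the neighbourhood $N(v)$ of a single vertex $v$, and $v$ is the only vertex that $Q_{d-1}-S$ isolates from the remaining (connected) part. This is classical, and can also be extracted from \cref{lem:facets-association}: choose a pair of opposite facets of $Q_{d-1}$ not associated with $S$ (one exists, since at most $|S|-1=d-2$ of the $d-1$ pairs can be associated) and push the vertices of $Q_{d-1}-S$ across this pair via the projection $\pi$ of \cref{def:projection}; one finds that $Q_{d-1}-S$ stays connected unless $S$ is a vertex neighbourhood. Applying this to a bad pair $\{s_i,t_i\}$: $S_i=N_G(v_i)$ for some vertex $v_i$, and because $s_i,t_i\notin S_i=N_G(v_i)$ must lie in different components of $G-S_i$, one of which is $\{v_i\}$, we get $v_i\in\{s_i,t_i\}$. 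So every bad pair comes with an \emph{anchor} $v_i\in\{s_i,t_i\}$ satisfying $N_G(v_i)=S_i$.

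Finally I would rule out two bad pairs. Suppose $\{s_1,t_1\}$ and $\{s_2,t_2\}$ are both bad, with anchors $v_1\in\{s_1,t_1\}$ and $v_2\in\{s_2,t_2\}$. Since the two pairs are disjoint, $v_1\ne v_2$, and $v_1\in\{s_1,t_1\}\subseteq X\setminus\{s_2,t_2\}=S_2=N_G(v_2)$, so $v_1v_2\in E(G)$. As $G$ is triangle-free, adjacent vertices have no common neighbour, so $N_G(v_1)\cap N_G(v_2)=\emptyset$; yet
\[
N_G(v_1)\cap N_G(v_2)=S_1\cap S_2=X\setminus\{s_1,t_1,s_2,t_2\},
\]
a set of cardinality $(d+1)-4=d-3\ge 1$ because $d\ge 4$ --- a contradiction. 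Hence at most one pair is bad, and at least $k-1$ of the pairs $\{s_i,t_i\}$ admit an $X$-valid path in $F$. (Instead of triangle-freeness one could also use \cref{rmk:cubical-common-neighbours}, that two vertices of a cube share at most two neighbours, to see that the $(d-1)$-sets $N_G(v_1)$ and $N_G(v_2)$ cannot overlap once $v_1v_2$ is an edge, provided $d-1\ge 3$.)

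I expect the real obstacle to be the super-connectedness step --- showing that the $(d-1)$-vertex obstruction $S_i$ of a bad pair is the neighbourhood of one of its own terminals rather than of some unrelated vertex. Once that anchoring is available, the contradiction reduces to the counting identity $S_1\cap S_2=X\setminus\{s_1,t_1,s_2,t_2\}$ together with bipartiteness of the cube, and the rest is bookkeeping.
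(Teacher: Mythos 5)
This lemma is stated in the paper as an import from the companion paper \cite{BuiPinUgo20a} and is not proved here, so there is no in-paper argument to compare against; I can only assess your proof on its own terms. Your reduction is sound: a pair $\{s_i,t_i\}$ fails exactly when $S_i:=X\setminus\{s_i,t_i\}$, a set of $d-1$ vertices, separates $s_i$ from $t_i$ in the $(d-1)$-cube $F$, hence is a minimum separator; and once each bad pair is ``anchored'' at an endpoint $v_i$ with $N_{G(F)}(v_i)=S_i$, the contradiction from two bad pairs (adjacent anchors with the nonempty common neighbourhood $X\setminus\{s_1,t_1,s_2,t_2\}$ in a bipartite graph) is clean and correct, including at the boundary case $d=4$. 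The anchor structure you derive is precisely the shape of Configuration $d$F in \cref{def:Conf-dF}, which is reassuring evidence that you have isolated the right obstruction.

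The one place where your proof is not yet self-contained is the super-connectedness step: that a separator of size $d-1$ in $Q_{d-1}$ must be the neighbourhood of a single vertex $v$, with $Q_{d-1}-S$ consisting of $\{v\}$ and one further connected piece. The fact is true and classical, and you correctly identify it as the crux, but ``can also be extracted from \cref{lem:facets-association}'' is a gesture rather than an argument. To close it with the tools quoted in this paper: by \cref{cor:separator-independent} the separator $S$ is independent; by \cref{lem:facets-association} choose a pair $\{F',F'^{o}\}$ of opposite facets of $Q_{d-1}$ not associated with $S$ and set $a:=|S\cap F'|\le |S\cap F'^{o}|=:b$; if $b\le d-3$ both $F'-S$ and $F'^{o}-S$ are connected and are joined by a free projection, so $S$ does not separate; this forces $a\le 1$, and the case $a=0$ again gives connectivity, so $a=1$, $b=d-2$, and an induction on dimension (with \cref{lem:cube-face-complex} supplying the connectivity of the antistar of the isolated vertex) shows $S\cap F'^{o}$ is a vertex neighbourhood inside $F'^{o}$ whose ``missing'' neighbour is the unique vertex of $S\cap F'$. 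Writing out that induction (or citing the standard result that $Q_n$ is super-connected) is all that is needed to make the proof complete.
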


\begin{proposition}[{\cite[Prop. 4 and Cor. 5 ]{BuiPinUgo20a}}]\label{prop:3-polytopes} Let $G$ be the graph of a 3-polytope and let $X$ be a set of four vertices of $G$. The set $X$ is linked in $G$ if and only if there is no facet of the polytope containing all the vertices of $X$. In particular, no nonsimplicial 3-polytope is 2-linked.
\end{proposition}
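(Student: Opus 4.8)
\noindent\emph{Proof idea.}
The plan is to lean on Steinitz's theorem: $G$ is a $3$-connected planar graph, and I would fix a plane embedding of it once and for all, recalling that any prescribed face can be taken as the outer face and that, by Tutte's theorem, a cycle of $G$ bounds a face precisely when it is induced and non-separating. I would treat the two implications separately.

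For ``$X$ linked $\Rightarrow$ no facet contains $X$'' I would argue contrapositively. Suppose a facet $F$ contains all four vertices of $X$. As a $2$-face, $F$ is bounded by a cycle, along which $s_1,t_1,s_2,t_2$ occur in some cyclic order; relabel so that this order is $s_1,s_2,t_1,t_2$ and take the pairing $Y=\{\{s_1,t_1\},\{s_2,t_2\}\}$. Redraw $G$ with $F$ as the outer face, bounding the closed disc in which $G$ lives. Were there disjoint paths $L_1$ (from $s_1$ to $t_1$) and $L_2$ (from $s_2$ to $t_2$), then $L_1$ would split the disc into regions with $s_2$ on the boundary arc $s_1,s_2,t_1$ and $t_2$ on the complementary arc $t_1,t_2,s_1$; any $s_2$--$t_2$ path would then be forced to meet $L_1$, a contradiction. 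Hence $Y$ is not linked, so $X$ is not linked. The ``in particular'' clause is then immediate: a nonsimplicial $3$-polytope has a facet with at least four vertices, and any four of them form a non-linked set.

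For the converse I would assume no facet contains all of $X$, fix an arbitrary pairing, say $Y=\{\{s_1,t_1\},\{s_2,t_2\}\}$, and suppose for contradiction that $Y$ is not linked. If $s_1t_1\in E(G)$ the argument is immediate: use the edge $s_1t_1$, and note that $G-\{s_1,t_1\}$ is connected by $3$-connectivity and hence carries an $s_2$--$t_2$ path; similarly if $s_2t_2\in E(G)$. So assume $s_1\not\sim t_1$ and $s_2\not\sim t_2$. By Menger's theorem there are three internally disjoint $s_1$--$t_1$ paths, whose union $\Theta$ is a theta subgraph and, in the plane embedding, bounds exactly three regions. The failure of $Y$ to be linked says exactly that every $s_1$--$t_1$ path separates $s_2$ from $t_2$; I would combine this with the location of $s_2$ and $t_2$ relative to $\Theta$ and its three regions to extract a separating structure which, after passing to a minimal configuration and invoking Tutte's characterisation of face boundaries, must be a facet of $P$ whose boundary cycle runs through all of $s_1,t_1,s_2,t_2$ --- contradicting the hypothesis.

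The heart of the argument, and the step I expect to be much the most delicate, is this last one: turning ``$Y$ is not linked'' into ``the four terminals lie on a common facet, in crossing order.'' The case analysis --- organised by whether $s_2$ and $t_2$ are vertices of $\Theta$ or lie in particular regions of it --- together with the verification that the obstruction one extracts is genuinely a face boundary and not merely some separating cycle, is where essentially all the work sits; the failure of $2$-linkedness for the $3$-cube, flagged in the introduction, shows that no cheaper hypothesis can work. A cleaner alternative would be to invoke the Seymour--Thomassen--Shiloach theorem characterising graphs with no two disjoint paths between prescribed pairs of ends, and then to check that its $3$-cut reductions, applied to a $3$-polytope graph, never destroy a facet of the polytope, so that the only obstruction that can survive is a facet carrying the four terminals in crossing order.
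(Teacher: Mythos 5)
This proposition is imported from \cite[Prop.~4 and Cor.~5]{BuiPinUgo20a} and is not proved anywhere in the present paper, so there is no in-paper argument to compare yours against; I can only assess the proposal on its own terms. Your necessity direction is complete and correct: with the facet $F$ as the outer face, the crossing pairing forces any $s_1$--$t_1$ path to separate $s_2$ from $t_2$ in the closed disc, so that pairing is not linked, and since linkedness of $X$ requires \emph{every} pairing to be linked, one bad pairing suffices. The ``in particular'' clause then follows as you say, because a nonsimplicial $3$-polytope has a facet with at least four vertices.

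The converse, however, is where essentially all the content of the proposition lives, and your proposal does not prove it. After the easy case $s_1t_1\in E(G)$, you introduce the theta subgraph $\Theta$ and then state that one must ``extract a separating structure which \dots must be a facet of $P$ whose boundary cycle runs through all of $s_1,t_1,s_2,t_2$''; but the passage from ``every $s_1$--$t_1$ path separates $s_2$ from $t_2$'' to the existence of an induced, non-separating cycle of $G$ passing through all four terminals in crossing order \emph{is} the theorem, and none of the case analysis you flag as ``where essentially all the work sits'' is carried out. The alternative route via the Seymour--Thomassen--Shiloach characterisation has the same status: for graphs that are only $3$-connected, the obstruction in that theorem is not simply ``terminals on a common face'' but a planar frame reached after reductions along cuts of size at most three, and your claim that these reductions applied to a polytope graph ``never destroy a facet'' --- the step that would upgrade the abstract planar obstruction to an actual facet of $P$ --- is asserted, not verified. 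As written, the sufficiency half is a plan rather than a proof, and it is exactly the half that distinguishes this statement from a routine planarity observation.
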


Given sets $A,B,X$ of vertices in a graph $G$,  the set  $X$  {\it separates} $A$ from $B$ if every $A-B$ path in the graph contains a vertex from $X$. A set $X$ separates two vertices $a,b$ not in $X$ if it separates $\{a\}$ from $\{b\}$. We call the set $X$ a {\it separator} of the graph. A set of vertices in a graph is {\it independent} if no two of its elements are adjacent.

 \begin{corollary}[{\cite[Corollary 10]{BuiPinUgo20a}}]\label{cor:separator-independent}  A separator of cardinality $d$ in a $d$-cube is an independent set.
 \end{corollary}

\begin{proof}[Proof of Lemma~\ref{lem:star-cubical} for $d=5$]

We proceed as in the proof for $d\geq 7$, and consider the same four cases. We let $k:=3$ and let $s_{1}$ be a vertex in a cubical $5$-polytope $P$ such that $s_{1}$ is not in Configuration $5$F. Recall that $\St_{1}$ denotes the star of $s_{1}$ in $\B(P)$. Let $X$ be any set of $6$ vertices in the graph $G(\St_{1})$ of  $\St_{1}$. The vertices in $X$ are our terminals. Also let $Y:=\{\{s_{1},t_{1}\},\{s_{2},t_{2}\},\{s_{3},t_{3}\}\}$ be a labelling and pairing of the vertices of $X$. We aim to find a $Y$-linkage $\{L_{1},L_{2},L_{3}\}$ in $G$ where $L_{i}$ joins the pair $\{s_{i},t_{i}\}$ for $i\in \{1,2,3\}$. Recall that a path is $X$-valid if it contains no inner vertex from $X$. 

We consider a facet $F_{1}$ of $\St_1$ containing $t_{1}$ and having the largest possible number of terminals. The four cases we consider in the Proof for the case $d\ge 7$ are: 
\begin{enumerate}

\item[\cref{case:new-linkedness-thm-1}.] $|X\cap V(F_{1})|=5$.
\item[\cref{case:new-linkedness-thm-2}.] $3\le |X\cap V(F_{1})|\le 4$.
\item[\cref{case:new-linkedness-thm-3}.] $|X\cap V(F_{1})|=2$ .
\item[\cref{case:new-linkedness-thm-4}.] $|X\cap V(F_{1})|=6$.  
 \end{enumerate} 
\cref{case:new-linkedness-thm-3} does not require any modification: all the arguments apply for $d\geq 5$. Let us consider the other three cases.

\begin{case}\label{case:new-linkedness-thm-1-d5}  $|X\cap V(F_{1})|= 5$.\end{case}
  
Without loss of generality,  assume that $  t_{2}\not \in V(F_{1})$.

In this case we proceed as for the case $d\ge 7$ until the final part of the proof where we find disjoint paths  $ L_{i}:=\pi_{R^{o}}^{F_{1}}( s_{i})-\pi_{R^{o}}^{F_{1}}( t_{i})$ ($i\in [1,k]$ and $i\ne 2$) in $R^{o}$ linking the $d-1$ vertices in $ X_{R^{o}}$. When $d=5$ we can only do that when the terminals in $R^o$ are not in cyclic order (in which case we proceed as in the proof for $d\ge 7$). Thus assume that the terminals are in cyclic order.
This in turn implies that $\pi_{R}^{F_{1}}( s_{3})\not\in\{ s_{2}, s_{2}'\}$ and $\pi_{R}^{F_{1}}( t_{3})\not\in\{ s_{2}, s_{2}'\}$, since $\dist_{F_{1}}(s_{1}, s_{2})=4$.  

Find a path $ L_{3}'$ in $R$  between $\pi_{R}^{F_{1}}( s_{3})$ and $\pi_{R}^{F_{1}}( t_{3})$ such that $ L_{3}'$ is disjoint from both $ s_{2}$ and $ s_{2}'$ and disjoint from $ t_{1}$ if $ t_{1}\in R$; here  use \cref{cor:separator-independent}, which ensures that the vertices $ s_{2}$, $ s_{2}'$ and $ t_{1}$, if they are all in $R$, cannot separate $\pi_{R}^{F_{1}}( s_{3})$ from $\pi_{R}^{F_{1}}( t_{3})$ in $R$,  since a separator of size three in $R$ must be an independent set. Extend the path $ L_{3}'$ in $R$ to a path $ L_{3}:= s_{3}\pi_{R}^{F_{1}}(s_{3})L_{3}'\pi_{R}^{F_{1}}(t_{3})t_{3}$ in $F_{1}$, if necessary. Find a path $ L_{1}':=s_{1}-\pi_{R^{o}}^{F_{1}}( t_{1})$ in $R^{o}$ disjoint from $\pi_{R^{o}}^{F_{1}}(s_{3})$ and $\pi_{R^{o}}^{F_{1}}(t_{3})$, using the 3-connectivity of $R^{o}$. Extend $L_{1}'$ to a path $L_{1}:=s_{1}L_{1}'\pi_{R^{o}}^{F_{1}}(t_{1})t_{1}$ in $F_{1}$, if necessary. The linkage $\{ L_{1}, L_{2},  L_{3}\}$  is a $Y$-linkage.  This completes the proof of \cref{case:new-linkedness-thm-1}. 

\begin{case}\label{case:new-linkedness-thm-2-d5}  $3\le | X\cap V(F_{1})|\le 4$.\end{case}

In this case we proceed as in the proof for $d\geq 7$, but some comments for $d=5$ are in order. By virtue of \cref{prop:3-polytopes}, we need to make sure that the sequence $\bar s_{2},\bar s_{3},\bar t_{2},\bar t_{3}$ in $ X^{+}_{R^{o}}$ is not in a 2-face of $R^{o}$ in cyclic order.  To ensure this, we need to be a bit more careful when selecting the vertices in $\bar Z$. Indeed, if there are already two vertices in $ X_{R^{o}}$ at distance three in $R^{o}$, no care is needed when selecting $\bar Z$, so proceed as in the case of $d\ge 7$. Otherwise, pick a vertex $\bar z \in \bar Z\subseteq V(R^{o})\setminus ( X_{R^{o}}\cup \{s_{1}^{o}\})$ such that $\bar z$ is the unique vertex in $R^{o}$ with $\dist_{R^{o}}(\bar z,x)=3$ for some vertex $x\in  X_{R^{o}}$; this vertex $x$ exists because $| X\cap V(F_{1})|\ge 3$. Selecting such a $\bar z\ne s_{1}^{o}$ is always possible because $s_{1}^{o}$ is not at distance three in $R^{o}$ from {\it any} vertex in $ X_{R^{o}}$: the unique vertex in $R^{o}$ at distance three from $s_{1}^{o}$ is $\pi_{R^{o}}^{F_{1}}(s_{1})$, and $\pi_{R^{o}}^{F_{1}}(s_{1})\not\in  X$ because the pair $\{R,R^{o}\}$ is not associated with $ X\cap V(F_{1})$. Once $\bar z$ is selected, the set $Z$ will contain a neighbour $z$ of $\bar z$. In this way,  some path $ S_{i}$ or $ T_{j}$ bringing terminals $ s_{i}$ or $ t_{j}$ in $\A_{1}$  into $R^{o}$ through $Z$ would use the vertex $z$, thereby ensuring that $x$ and $\bar z$ would be both in  $X^{+}_{R^{o}}$. This will cause the   the sequence $\bar s_{2},\bar s_{3},\bar t_{2},\bar t_{3}$ not to be in a 2-face, and thus, not in cyclic order.

\item[\cref{case:new-linkedness-thm-4}.] $|X\cap V(F_{1})|=6$.  

  The difficulty with $d=5$ stems from the $3$-faces of the polytope not being 2-linked (\cref{prop:3-polytopes}). Recall that in this case, all the terminals are in the facet $F_1$. The proof is divided into subcases depending on the nature of the vertex opposite to $s_1$ in $F_1$. Either it is not in $X$ (subcase A), or it is a terminal but not $t_1$ (subcase B), or it is $t_1$ (subcase C).

\subsection *{\bf \uppercase{Subcases A and B}. The vertex $s_{1}^{o}$ opposite to $s_{1}$ in $F_{1}$ either does not belong to $ X$ or belongs to $ X$ but is different from $ t_{1}$.}

  Let $X:=\{s_{1},s_{2},s_{3}, t_{1},t_{2},t_{3}\}$ be any set of six vertices in the graph $G$ of a cubical $5$-polytope $P$. Also let $Y:=\{\{s_{1},t_{1}\},\{s_{2},t_{2}\},\{s_{3},t_{3}\}\}$. We aim to find a $Y$-linkage $\{L_{1},L_{2},L_{3}\}$ in $G$ where $L_{i}$ joins the pair $\{s_{i},t_{i}\}$ for $i=1,2,3$. 
    
In both subcases there is a 3-face $R$ of $F_{1}$ containing both $s_{1}$ and $ t_{1}$. Let $J_{1}$ be the other facet in $\St_{1}$ containing $R$. Denote by $R_{J}$ and $R_{F}$ the $3$-faces in $J_{1}$ and $F_{1}$, respectively, that are disjoint from $R$. Then $s_{1}^{o}\in R_{F}$. We need the following claim.

\begin{claim}\label{cl:d=5} If a 3-cube contains three pairs of terminals, there must exist two pairs of terminals in the 3-cube, say $\{s_1,  t_{1}\}$ and $\{ s_{2}, t_{2}\}$,  that are not  arranged in the cyclic order $s_1, s_{2}, t_{1}, t_{2}$ in a 2-face of the cube.
\end{claim}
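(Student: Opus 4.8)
The plan is to argue by contradiction. Suppose that for each of the three ways of choosing two of the three pairs, the four terminals involved lie on a common $2$-face of the $3$-cube and, moreover, appear there in the ``crossing'' cyclic order $s_i,s_j,t_i,t_j$; I would then derive a contradiction from the rigidity of $2$-faces of a $3$-cube.

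First I would record two elementary facts, both read off from the coordinate model $\{0,1\}^3$ of the cube: (a) a $2$-face is the set of vertices agreeing in one fixed coordinate, so any two vertices at distance two from each other agree in exactly one coordinate and hence lie on a unique common $2$-face; and (b) on a $2$-face, the two vertices at distance two from each other are exactly a diagonal of that square, and conversely the two diagonals of a $2$-face are precisely its two distance-two pairs of vertices.

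The key observation is then that if a pair $\{s_i,t_i\}$ and a pair $\{s_j,t_j\}$ occur in the cyclic order $s_i,s_j,t_i,t_j$ on a $2$-face $S$, then $\{s_i,t_i\}$ and $\{s_j,t_j\}$ are exactly the two diagonals of $S$; in particular each of these pairs is a distance-two pair of the cube. Under the contradiction hypothesis this makes all three pairs distance-two pairs. Applying fact (a) to $\{s_1,t_1\}$, there is a unique $2$-face $S$ containing both $s_1$ and $t_1$, and $S$ must be the witnessing $2$-face for $\{s_1,t_1\}$ with $\{s_2,t_2\}$ and also for $\{s_1,t_1\}$ with $\{s_3,t_3\}$ (in each case the witnessing $2$-face contains $s_1$ and $t_1$). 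By the key observation, both $\{s_2,t_2\}$ and $\{s_3,t_3\}$ are then the diagonal of $S$ complementary to $\{s_1,t_1\}$, so $\{s_2,t_2\}=\{s_3,t_3\}$, contradicting the fact that the six terminals are distinct.

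I do not expect a genuine obstacle here; the only point requiring care is to keep ``appears in the crossing cyclic order $s_i,s_j,t_i,t_j$'' distinct from the weaker statement ``the four vertices lie on a common $2$-face''. A pair of pairs sitting on a common $2$-face but in the order $s_i,t_i,s_j,t_j$ is harmless (the two terminals of a pair are then adjacent on the square rather than forming a diagonal), and the whole argument uses the crossing order only to force each pair to be a diagonal of the cube.
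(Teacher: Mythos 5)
Your proof is correct; the one point needing care (distinguishing the crossing cyclic order, which forces each pair to be a diagonal, from mere coplanarity) is handled properly, and the final contradiction $\{s_2,t_2\}=\{s_3,t_3\}$ does follow since a square has only two diagonals and the six terminals are distinct. Your route is genuinely different from the paper's, though. The paper argues directly: it assumes some pair, say $\{s_1,t_1\}$, sits in the crossing configuration $s_1,x_1,t_1,x_2$ on a $2$-face with $x_1,x_2$ terminals, picks a terminal $s_2\notin\{x_1,x_2\}$ from one of the other two pairs, and invokes the fact that $K_{2,3}$ is not a subgraph of the cube graph (\cref{rmk:cubical-common-neighbours}) to conclude that $s_2$ cannot be adjacent to both $s_1$ and $t_1$, so the pair containing $s_2$ witnesses the claim together with $\{s_1,t_1\}$. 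You instead negate the whole statement, observe that every pair must then be a diagonal of some square, and use the uniqueness of the $2$-face spanned by a distance-two pair to force two of the three pairs to coincide. The two arguments rest on essentially the same geometric fact (two vertices of the cube have at most two common neighbours, equivalently a distance-two pair determines a unique $2$-face), but yours is self-contained in the coordinate model and avoids citing the $K_{2,3}$ remark, at the cost of being purely by contradiction; the paper's version has the minor advantage of exhibiting concretely which two pairs satisfy the conclusion, which is closer to how the claim is used downstream.
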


\begin{remark}\label{rmk:cubical-common-neighbours} If $x$ and $y$ are vertices of a cube, then they share at most two neighbours. In other words, the complete bipartite graph $K_{2,3}$ is not a subgraph of the cube; in fact, it is not an induced subgraph of any simple polytope \cite[Cor.~1.12(iii)]{PfePilSan12}. 
\end{remark}

\begin{claimproof} If no terminal in the cube is in Configuration 3F, we are done. So  suppose that one is, say $s_{1}$, and that the sequence $s_1,x_{1},  t_{1},x_{2}$ of vertices of $X$ is present in cyclic order in a 2-face. Without loss of generality, assume that $ s_{2}\not\in\{x_{1},x_{2}\}$. Then $ s_{2}$ cannot be adjacent to both $ s_{1}$ and $ t_{1}$, since the bipartite graph $K_{2,3}$ is not a subgraph of $G(Q_{3})$ (\cref{rmk:cubical-common-neighbours}). Thus  the sequence $s_1, s_{2}, t_{1}, t_{2}$ cannot be in a 2-face in cyclic order. 
\end{claimproof}  
 
{\bf Suppose all the six terminals are in the 3-face $R$.} By virtue of \cref{cl:d=5},  we may assume that the pairs $\{s_{1},  t_{1}\}$ and $\{  s_{{2}}, t_{{2}}\}$ are not arranged in the cyclic order $s_1, s_{2}, t_{1}, t_{2}$ in a 2-face of $R$.  \cref{prop:3-polytopes}  ensures that the pairs $\{\pi_{R_{J}}^{J_{1}}( s_{1}),\pi_{R_{J}}^{J_{1}}( t_{1})\}$ and $\{\pi_{R_{J}}^{J_{1}}( s_{2}),\pi_{R_{J}}^{J_{1}}( t_{2})\}$ in $R_{J}$ can be linked in $R_{J}$ through disjoint paths $ L_{1}'$ and $ L_{2}'$, since the sequence $\pi_{R_{J}}^{J_{1}}( s_{1}),\pi_{R_{J}}^{J_{1}}( s_{2}),\pi_{R_{J}}^{J_{1}}(t_{1}),\pi_{R_{J}}^{J_{1}}( t_{2})$ cannot be in a 2-face of $R_{J}$ in cyclic order. Moreover, by the connectivity of $R_{F}$, there is a path $ L_{3}'$ in $R_{F}$ linking the  pair $\{\pi_{R_{F}}^{F_{1}}( s_{3}),\pi_{R_{F}}^{F_{1}}( t_{3})\}$. The linkage $\{ L_{1}',  L_{2}',  L_{3}'\}$ can naturally be extended to a $ Y$-linkage $\{ L_{1},  L_{2},  L_{3}\}$  as follows.

\[L_{i}:=\begin{cases}
	s_{i}\pi_{R_{J}}^{J_{1}}( s_{i})L_{i}'\pi_{R_{J}}^{J_{1}}(t_{i})t_{i}, &\text{for $i=1,2$};\\
	s_{3}\pi_{R_{F}}^{F_{1}}( s_{3})L_{3}'\pi_{R_{F}}^{F_{1}}(t_{3})t_{3}, &\text{otherwise}.
\end{cases}
\]

{\bf Suppose that $R$ contains a pair $\{ s_{i}, t_{i}\}$ for $i=2,3$, say $\{ s_{2},  t_{2}\}$}. There are at most five terminals in $R$, and consequently, applying  \cref{lem:short-distance} to the polytope $F_{1}$ and its facet $R$,  we obtain an $ X$-valid  path $ L_{1}:=s_{1}- t_{1}$  in $R$ or an $ X$-valid path  $ L_{2}:= s_{2}- t_{2}$  in $R$. For the sake of concreteness, say an $ X$-valid path $ L_{2}$ exists in $R$. From the connectivity of $R_{F}$ and $R_{J}$ follows the existence of a path $ L_{3}'$ in $R_{F}$ between $\pi_{R_{F}}^{F_{1}}( s_{3})$ and $\pi_{R_{F}}^{F_{1}}( t_{3})$, and of a path $ L_{1}'$ in $R_{J}$ between $\pi_{R_{J}}^{J_{1}}(s_{1})$ and $\pi_{R_{J}}^{J_{1}}( t_{1})$ (recall that $t_1\in R\subset J_1$). The linkage $\{ L_{1}', L_{2}', L_{3}'\}$ can be extended to a linkage $\{s_{1}- t_{1}, s_{2}- t_{2}, s_{3}- t_{3}\}$ in $\St_{1}$.
	
{\bf Suppose that the ridge $R$ contains no other pair from $ Y$ and that the ridge $R_{F}$ contains a  pair $( s_{i}, t_{i})$ ($i=2,3$)}.  Without loss of generality, assume $ s_{2}$ and $ t_{2}$ are in $R_{F}$. 

First suppose that $s_{3}\in R$, which implies that $ t_{3}\in R_{F}$. Further suppose that there is a path $T_{3}$ of length at most two from $t_{3}$ to $R$ that is disjoint from $ X\setminus\{ s_{3}, t_{3}\}$. Let $ \{ t_{3}'\}:=V(T_{3})\cap V(R)$. Use the 2-linkedness of the 4-polytope $J_{1}$ \cite[Prop. 6]{BuiPinUgo20a} to find disjoint paths $ L_{1}:=s_{1}- t_{1}$ and $ L_{3}':= s_{3}- t_{3}'$ in $J_{1}$. Let $ L_{3}:= s_{3} L_{3}' t_{3}'T_{3} t_{3}$. Use the 3-connectivity of $R_{F}$ to find an $ X$-valid path $ L_{2}:= s_{2}- t_{2}$ in $R_{F}$ that is disjoint from $V(T_{3})$; note that $|V(T_{3})\cap V(R_{F})|\le 2$. The paths $\{ L_{1}, L_{2}, L_{3}\}$ give the desired  $Y$-linkage. Now suppose there is no such path $T_{3}$ from $ t_{3}$ to $R$. Then, the projection $\pi_{R}^{F_{1}}( t_{3})$ is in  $\{s_{1}, t_{1}\}$, say $\pi_{R}^{F_{1}}( t_{3})=t_{1}$; the projection $\pi_{R_{F}}^{F_{1}}(s_{1})$ is a neighbour of $ t_{3}$ in $R_{F}$; and both $ s_{2}$ and $ t_{2}$ are neighbours of $ t_{3}$ in $R_{F}$. This configuration implies that $s_{1}$ and $ t_{1}$ are adjacent in $R$. Let $ L_{1}:=s_{1} t_{1}$. Find a path $ L_{2}:= s_{2}- t_{2}$ in $R_{F}$ that is disjoint from $t_{3}$, using the 3-connectivity of $R_{F}$.   
Then using \cref{lem:projections-star} find a neighbour $ s_{3}'$ in $\A_{1}$ of $ s_{3}$ and a neighbour $ t_{3}'$ in $\A_{1}$ of $t_{3}$; note that, since $\dist_{F_{1}}(s_{1}, t_{3})\le 2$, we have that $ t_{3}\ne s_{1}^{o}$, and since $\{s_1,s_3\}\in V(R)$, $s_3\ne s_{1}^o$. Find a path $ L_{3}$ in $\St_{1}$ between $ s_{3}$ and $ t_{3}$  that contains a subpath $L_{3}'$ in $\A_{1}$ between $ s_{3}'$ and $ t_{3}'$; here use the connectivity of $\A_{1}$ (\cref{prop:star-minus-facet}): $L_{3}:=s_{3}s_{3}'L_{3}'t_{3}'t_{3}$. The linkage $\{ L_{1}, L_{2}, L_{3}\}$ is the desired  $Y$-linkage.

Assume that $ s_{3}\in R_{F}$; by symmetry we can further assume that $ t_{3}\in R_{F}$. The connectivity of $R$ ensures the existence of a path $ L_{1}:= s_{1}- t_{1}$ therein.  In the case of $s_{1}^{o}\in  X$, without loss of generality, assume $s_{1}^{o}= s_{2}$. The 3-connectivity of $R_{F}$ ensures the existence of an $ X$-valid path $L_{2}:= s_{2}- t_{2}$ therein. Use \cref{lem:projections-star} to find pairwise distinct neighbours $ s_{3}'$ of $ s_{3}$ and $ t_{3}'$ of $ t_{3}$ in $\A_{1}$; these exist since $ s_{3}\ne s_{1}^{o}$ and $ t_{3}\ne s_{1}^{o}$. Using the connectivity of $\A_{1}$ (\cref{prop:star-minus-facet}), find a path $ L_{3}:= s_{3}- t_{3}$ in $\St_{1}$ that contains a subpath $ s_{3}'- t_{3}'$ in $\A_{1}$. The linkage $\{ L_{1}, L_{2}, L_{3}\}$ is the desired  $Y$-linkage.

{\bf Assume neither $R$ nor $R_{F}$ contains a pair $\{ s_{i}, t_{i}\}$ ($i=2,3$)}. Without loss of generality, assume  that $ s_{2}, s_{3}\in R$,  that $ t_{2}, t_{3}\in R_{F}$ and that $ t_{2}\ne s_{1}^{o}$.  

First suppose  that there exists a path $S_{3}$ in $F_{1}$ from $ s_{3}$ to $R_{F}$ that is  of length at most two and is disjoint from $ X\setminus \{ s_{3}, t_{3}\}$. Let $\{\hat s_{3}\}:=V(S_{3})\cap V(R_{F})$.   Find pairwise distinct neighbours $ s_{2}'$ and $ t_{2}'$ of $ s_{2}$ and $ t_{2}$, respectively, in $\A_{1}$. And find a path $ L_{2}:= s_{2}- t_{2}$ in $\St_{1}$ that contains a subpath $ s_{2}'- t_{2}'$ in $\A_{1}$ (using the connectivity of $\A_{1}$). 
Using the 3-connectivity of $R_{F}$ link the pair $\{\hat s_{3}, t_{3}\}$ in $R_{F}$ through a path $ L_{3}'$ that is disjoint from $t_{2}$. Let $L_{3}:=s_{3}S_{3}\hat s_{3}L_{3}'t_{3}$. Since \cref{cor:separator-independent}  ensures that any separator of size three in a 3-cube must be independent, we can find a path $ L_{1}:=s_{1}- t_{1}$ in $R$ that is disjoint from $s_{2}$ and $V(S_{3})\cap V(R)$; the set $V(S_{3})\cap V(R)$ has either cardinality one or contains an edge. The paths $\{ L_{1}, L_{2}, L_{3}\}$ form the desired  $Y$-linkage.

Assume that there is no such path $S_{3}$. In this case, the neighbours of $ s_{3}$ in $F_{1}$ are $s_{1}, t_{1}, s_{2}$ from $R$ and $ t_{2}$ from $R_{F}$. Use \cref{lem:projections-star} to find a neighbour  $ s_{3}'$ of $ s_{3}$ in $\A_{1}$. Again use \cref{lem:projections-star}  either to find  a neighbour $ t_{3}'$ of $ t_{3}$ if $ t_{3}\ne s_{1}^{o}$ or to find a neighbour $ t_{3}'$ of a neighbour $u$ of $ t_{3}$ in $R_{F}$ (with $u\ne t_{2}$) if $  t_{3}=s_{1}^{o}$.  Let $T_{3}$ be the path of length at most two from $ t_{3}$ to $\A_{1}$ defined as $T_{3}= t_{3} t_{3}'$ if $ t_{3}\ne s_{1}^{o}$ and $T_{3}= t_{3}u t_{3}'$ if $ t_{3}= s_{1}^{o}$. Find  a path $ L_{3}$ in $\St_{1}$ between $ s_{3}$ and $ t_{3}$  that contains a subpath in $\A_{1}$ between $ s_{3}'$ and $ t_{3}'$; here use the connectivity of $\A_{1}$ (\cref{prop:star-minus-facet}). We next find a path $S_{2}$ in $F_{1}$ from $ s_{2}$ to $R_{F}$ that is  of length at most two and is disjoint from $V(T_{3})\cup \{s_{1}, t_{1}, s_{3}\}$. There are exactly four disjoint  $ s_{2}-R_{F}$ paths of length at most two, one through each of the neighbours of $ s_{2}$ in $F_{1}$. One such path is $ s_{2} s_{3} t_{2}$. Among the remaining three $ s_{2}-R_{F}$ paths, since none of them contains $s_{1}$ or $ t_{1}$ and since $|V(T_{3})\cap V(R_{F})|\le 2$, we find the path $S_{2}$.  Let $\hat s_{2}:=V(S_{2})\cap V(R_{F})$. Find a path $ L_{2}':=\hat s_{2}- t_{2}$ in $R_{F}$ that is disjoint from $V(T_{3})$, using the 3-connectivity of $R_{F}$. Let $ L_{2}:= s_{2}S_{2}\hat s_{2} L_{2}' t_{2}$. Since the vertices in $(V(S_{2})\cap V(R))\cup\{ s_{3}\}$ cannot separate $s_{1}$ from $ t_{1}$ in $R$ (\cref{cor:separator-independent}), find a path $ L_{1}:=s_{1}- t_{1}$ in $R$ disjoint from $V(S_{2})\cap V(R)\cup\{ s_{3}\}$; the set $V(S_{2})$ has cardinality one or contains one edge. The paths $\{ L_{1}, L_{2}, L_{3}\}$ form the desired  $Y$-linkage.

\subsection *{\bf \uppercase{Subcase} C. The vertex opposite to $s_{1}$ in $F_{1}$ coincides with $ t_{1}$}  
 
Since $s_1$ is not in configuration $d$3 we may suppose  that $ t_{1}$ has a neighbour $ t_{1}'$ not in $ X$. We reason as in Subcases A and B. We give the details for the sake of completeness.  

Let $R$ denote the $3$-face in $F_{1}$ containing both $s_{1}$ and $ t_{1}'$; $\dist_{R}(s_{1}, t_{1}')=3$. Let $R_{F}$ be the $3$-face of $F_{1}$ disjoint from $R$.  Let $J_{1}$ be the other facet in $\St_{1}$ containing $R$ and let $R_{J}$ be the $3$-face of $J_{1}$ disjoint from $R$.
  
{\bf Suppose $R$ contains a pair $\{ s_{i}, t_{i}\}$ ($i=2,3$), say $( s_{2}, t_{2})$.} There are at most five terminals in $R$ (as $t_1$ is in $R_F$). Since the smallest face in $R$ containing $s_{1}$ and $ t_{1}'$ is 3-dimensional,  the sequence  $\pi_{R_{J}}^{J_{1}}(s_{1}), \pi_{R_{J}}^{J_{1}}( s_{2}), \pi_{R_{J}}^{J_{1}}( t_{1}'),\pi_{R_{J}}^{J_{1}}( t_{2})$ cannot appear in a 2-face of $R_{J}$ in cyclic order. As a consequence, the pairs  $\{\pi_{R_{J}}^{J_{1}}(s_{1}), \pi_{R_{J}}^{J_{1}}( t_{1}')\}$ and $\{\pi_{R_{J}}^{J_{1}}( s_{2}), \pi_{R_{J}}^{J_{1}}( t_{2})\}$ can be linked in $R_{J}$ through disjoint paths $ L_{1}'$ and $ L_{2}'$, thanks to \cref{prop:3-polytopes}. Let $ L_{1}:=s_{1}\pi_{R_{J}}^{J_{1}}(s_{1}) L_{1}'\pi_{R_{J}}^{J_{1}}( t_{1}') t_{1}' t_{1}$ and $ L_{2}:= s_{2}\pi_{R_{J}}^{J_{1}}(s_{2}) L_{2}'\pi_{R_{J}}^{J_{1}}( t_{2}) t_{2}$. From the 3-connectivity of $R_{F}$ follows the existence of a path $ L_{3}'$ in $R_{F}$ between $\pi_{R_{F}}^{F_{1}}( s_{3})$ and $\pi_{R_{F}}^{F_{1}}( t_{3})$ that avoids $ t_{1}$. Let $ L_{3}:= s_{3}\pi_{R_{F}}^{F_{1}}( s_{3}) L_{3}'\pi_{R_{F}}^{F_{1}}( t_{3}) t_{3}$. The paths $\{ L_{1}, L_{2}, L_{3}\}$ form the desired   $ Y$-linkage.

{\bf Suppose that the ridge $R$ contains no pair  $\{ s_{i}, t_{i}\}$ ($i=2,3$) and that the ridge $R_{F}$ contains a  pair $\{ s_{i}, t_{i}\}$ ($i=2,3$), say $\{ s_{2}, t_{2}\}$}. Then, there are at most five terminals in $R_{F}$. If there are at most four terminals in $R_{F}$, the 3-connectivity of $R_{F}$ ensures the existence of an $ X$-valid path $ L_{2}:= s_{2}- t_{2}$ in $R_{F}$; if there are exactly five terminals in $R_{F}$, applying \cref{lem:short-distance} to the polytope $F_{1}$ and its facet $R_{F}$ gives either an $ X$-valid path $ L_{2}:= s_{2}- t_{2}$ or an $ X$-valid path  $ L_{3}:= s_{3}- t_{3}$ in $R_{F}$. As a result, regardless of the number of terminals in $R_{F}$, we can assume there is an $ X$-valid path $ L_{2}:= s_{2}-t_{2}$ in $R_{F}$. Find pairwise distinct neighbours $ s_{3}'$ and $ t_{3}'$  in $\A_{1}$ of $ s_{3}$ and $ t_{3}$, respectively, and a path $ L_{3}$ in $\St_{1}$ between $ s_{3}$ and $ t_{3}$  that contains a subpath in $\A_{1}$ between $ s_{3}'$ and $ t_{3}'$; here use the connectivity of $\A_{1}$ (\cref{prop:star-minus-facet}). In addition, let $ L_{1}'$ be a path in $R$ between  $s_{1}$ and $ t_{1}'$;  here use the 3-connectivity of $R$ to avoid any terminal in $R$. Let $L_{1}:=s_{1}L_{1}'t_{1}'t_{1}$.  The  $Y$-linkage is given by the paths $\{ L_{1}, L_{2}, L_{3}\}$.

{\bf Assume neither $R$ nor $R_{F_{1}}$ contains a pair $\{ s_{i}, t_{i}\}$ ($i=2,3$).}  Without loss of generality,  we can assume $ s_{2}, s_{3}\in R$ and $ t_{2}, t_{3}\in R_{F}$. 

There exists a path $S_3$ from $s_{3}$ to $R_F$ that is of length at most two and is disjoint from $\{s_1,t_1,t'_1,s_2,t_2\}$. If $\pi_{R_F}(s_3) \ne t_2$, then $S_3 = s_3\pi_{R_F}(s_3)$. Otherwise, there are exactly three disjoint paths of length 2 from $s_3$ to $R_F$. At most two of them contain a vertex in $N_{R}(s_3)\cap (X\cup \{t'_1\})$ (since $\dist(s_1,t_1)=3$, they cannot be both neighbours of $s_3$). Thus we can take $S_3$ as the path $s_3u\pi_{R_F}(u)$ through a neighbour $u$ of $s_3$ in $R$ such that $u\notin X\cup \{t'_1\}$ and $\pi_{R_F}(u) \notin \{t_1,t_2\} = \{\pi_{R_F}(s_3),\pi_{R_F}(t'_1)\}$.

 Let $\{\hat s_{3}\}:=V(S_{3})\cap V(R_{F})$. Find an $ X$-valid path $ L_{3}':=\hat s_{3}- t_{3}$ in $R_{F}$ using its 3-connectivity. Let $ L_{3}:= s_{3}S_{3}\hat s_{3} L_{3}' t_{3}$.  Then find neighbours $ s_{2}'$ and $ t_{2}'$ of $ s_{2}$ and $ t_{2}$, respectively, in $\A_{1}$, and a path $ L_{2}:= s_{2}- t_{2}$ in $\St_{1}$ that contains a subpath $ s_{2}'- t_{2}'$ in $\A_{1}$ (using the connectivity of $\A_{1}$). Since \cref{cor:separator-independent}  ensures that any separator of size three in a 3-cube must be independent, we can find an $ L_{1}':=s_{1}- t_{1}'$ in $R$ that is disjoint from $ s_{2}$ and $V(S_{3})\cap V(R)$; the set $V(S_{3})\cap V(R)$ has either cardinality one or contains an edge. Let  $ L_{1}:=s_{1} L_{1}' t_{1}' t_{1}$. The paths $\{ L_{1}, L_{2}, L_{3}\}$ form the desired   $ Y$-linkage.

This concludes the proof of \cref{lem:star-cubical} for $d=5$.
\end{proof}


\begin{thebibliography}{10}

\bibitem{BolTho96}
B.~Bollob{\'a}s and A.~Thomason, \emph{Highly linked graphs}, Combinatorica
  \textbf{16} (1996), no.~3, 313--320.

\bibitem{ThiPinUgo18v3}
H.~T.~Bui, G.~Pineda-Villavicencio, and J.~Ugon, \emph{Connectivity of cubical
  polytopes}, J. Combin. Theory Ser. A \textbf{169} (2020), 105--126.

 \bibitem{BuiPinUgo20a}
H.~T. Bui, G.~Pineda-Villavicencio, and J.~Ugon, \emph{{The linkedness of
  cubical polytopes: The cube}}, Electron. J. Comb. \textbf{28} (2021), P3.45.


\bibitem{Die05}
R.~Diestel, \emph{{Graph Theory}}, 5th ed., Graduate Texts in Mathematics, vol.
  173, Springer-Verlag, Berlin, 2017.


\bibitem{Gal85} S.~Gallivan, \emph{Disjoint edge paths between given vertices of a convex polytope}, J. Combin. Theory Ser. A \textbf{39} (1985), no.~1, 112--115. \MR{787721}


\bibitem{GooORo97-1st}
J.~E. Goodman and J.~O'Rourke (eds.), \emph{Handbook of discrete and
  computational geometry}, 1st ed., Discrete Mathematics and its Applications
  (Boca Raton), Chapman \& Hall/CRC, Boca Raton, FL, 1997.



  
\bibitem{KawKosYu06}
K.~Kawarabayashi, A.~Kostochka, and G.~Yu, \emph{On sufficient degree
  conditions for a graph to be $k$-linked}, Comb. Probab.
  Comput. \textbf{15} (2006), no.~5, 685--694.


\bibitem{LarMan70}
D.~G. Larman and P.~Mani, \emph{On the existence of certain configurations
  within graphs and the {$1$}-skeletons of polytopes}, Proc. London Math. Soc.
  (3) \textbf{20} (1970), 144--160. 
  \bibitem{Mes16}
G.~M\'esz\'aros, \emph{On linkedness in the {C}artesian product of graphs},
  Period. Math. Hungar. \textbf{72} (2016), no.~2, 130--138.
  

\bibitem{PfePilSan12}
J.~Pfeifle, V.~Pilaud, and F.~Santos, \emph{Polytopality and {C}artesian
  products of graphs}, Israel J. Math. \textbf{192} (2012), no.~1, 121--141.
  
\bibitem{RobSey-XIII}
N.~Robertson and P.~D.~Seymour,
\newblock {\it Graph minors. XIII. The disjoint paths problem},
\newblock {J. Combin. Theory, Ser. B, \textbf{63}, 65--110.}

\bibitem{Sal67}
G.~T. Sallee, \emph{Incidence graphs of convex polytopes}, J. Combinatorial
  Theory \textbf{2} (1967), 466--506. 
  
  
\bibitem{ThoWol05}
R.~Thomas and P.~Wollan, \emph{An improved linear edge bound for graph
  linkages}, European J. Combin. \textbf{26} (2005), no.~3-4, 309--324.
  


\bibitem{WerWot11}
A.~Werner and R.~F. Wotzlaw, \emph{On linkages in polytope graphs}, Adv. Geom.
  \textbf{11} (2011), no.~3, 411--427. 
\bibitem{Ron09}
R.~F. Wotzlaw, \emph{Incidence graphs and unneighborly polytopes}, Ph.D.
  thesis, Technical University of Berlin, Berlin, 2009.

\bibitem{Zie95}
G.~M. Ziegler, \emph{Lectures on polytopes}, Graduate Texts in Mathematics,
  vol. 152, Springer-Verlag, New York, 1995. 
\end{thebibliography}
\end{document}